\Crefname{appendix}{Annexe}{Annexes}
\newtheorem{thm}{Théorème}[section]
\newtheorem{defn}[thm]{Définition}
\newtheorem{prop}[thm]{Proposition}
\newtheorem{lem}[thm]{Lemme} %Environnement introduit pour pallier au fait que les « lemma » qui sont cités en anglais par \cref
\newtheorem{conj}[thm]{Conjecture}
\theoremstyle{remark}
\chardef\csname pre amssym.def at\endcsname=\the\catcode`\@
\def\undefine#1{\let#1\undefined}
\def\newsymbol#1#2#3#4#5{\let\next@\relax
 \ifnum#2=\@ne\let\next@\msafam@\else
 \ifnum#2=\tw@\let\next@\msbfam@\fi\fi
 \mathchardef#1="#3\next@#4#5}
\def\mathhexbox@#1#2#3{\relax
 \ifmmode\mathpalette{}{\m@th\mathchar"#1#2#3}%
 \else\leavevmode\hbox{$\m@th\mathchar"#1#2#3$}\fi}
\def\hexnumber@#1{\ifcase#1 0\or 1\or 2\or 3\or 4\or 5\or 6\or 7\or 8\or
 9\or A\or B\or C\or D\or E\or F\fi}
\font\teneufm=eufm10
\font\seveneufm=eufm7
\font\fiveeufm=eufm5
\newcommand{\N}{{\bf N}}
\newcommand{\Z}{{\bf Z}}
\def    \Z  {{\mathbb Z}}
\def    \N  {{\mathbb N}}
\def\qedbox{$\square$}%
\def\qed{\ifmmode\qedbox\else\unskip\ \hglue0mm\hfill
     \qedbox\smallskip\goodbreak\fi}%
\begin{document}

\title[Graphes dans les surfaces et ergodicité topologique]
{Graphes dans les surfaces et ergodicité topologique}

\author[Dustin Connery-Grigg, Fran\c{c}ois Lalonde et Jordan Payette]{Dustin Connery-Grigg, Fran\c{c}ois Lalonde et Jordan Payette}

\address{Connery-Grigg: Département de mathématiques et de statistique, Université de Montréal, Montréal, Québec, Canada et Institut de Mathématiques de Jussieu-Paris Rive Gauche, Sorbonne Université, France. Lalonde: Département de mathématiques et de statistique, Université de Montréal, Montréal, Québec, Canada et Stanford University, CA, USA. Payette : Department of Mathematics and Statistics, McGill University, Montréal, Québec, Canada}
\email{connery@imj-prg.fr; lalonde@dms.umontreal.ca;jordan.payette@mail.mcgill.ca}

\thanks{Le premier auteur est soutenu par une bourse doctorale du Fonds de recherche du Québec\,--\,Nature et technologie (FRQNT) et par une bourse postdoctorale de la Fondation des sciences mathématiques de Paris (FSMP). Le second est soutenu par le programme des chaires de recherches du Canada en géométrie et topologie symplectiques et par une subvention du Conseil de recherches en sciences naturelles et en génie du Canada (CRSNG). Le troisième est soutenu par une bourse postdoctorale du FRQNT}

\date{}

      \bigskip \bigskip

\maketitle

\bigskip
\bigskip \bigskip \bigskip \bigskip \bigskip \bigskip \bigskip \bigskip \bigskip \bigskip \bigskip \bigskip \bigskip \bigskip \bigskip \bigskip \bigskip 
\noindent
\begin{center}
Subject classification: 05C10, 05C21, 37B02, 55N10, 57Q35, 68R01. 
\end{center}

  \bigskip    
\begin{otherlanguage}{english} 
 \begin{abstract} The simplest way to make a dynamical system out of a finite connected graph $G$ is to give it a polarization, that is to say a cyclic ordering of the edges incident to a vertex, for each vertex. The phase space $\mathcal{P}(G)$ then consists of all pairs $(v,e)$ where $v$ is a vertex and $e$ is an edge incident to $v$. Such an initial condition gives a position and a momentum.  The data $(v,e)$ is of course equivalent to an edge endowed with an orientation $e_{\mathcal O}$. With the polarization, each initial data leads to a leftward walk defined by turning left at each vertex, or making a rebound if there is no other edge. A leftward walk is called complete if it goes through all edges of $G$, not necessarily in both directions. As usual, we define the valence of a vertex as the number of edges incident to it, and we define the valence of a graph as the average of the valences of its vertices. In this article, we prove that if a graph which is embedded in a closed oriented surface of genus $g$ admits a complete leftward walk, then its valence is at most $1 + \sqrt{6g+1}$. We prove furthermore that this result is sharp for infinitely many genera $g$, and that it is asymptotically optimal as $g \to + \infty$. This leads to obstructions for the embeddability of graphs on a surface in a way which admits a complete leftward walk. Since checking that a polarized graph admits a complete leftward walk or not is done in time $4N$, where $N$ is the cardinality of the edges (we just have to check it on both orientations of any given edge), this obstruction is particularly efficient in terms of computability.  This problem has its origins in interesting consequences for what we will call here the {\it topological ergodicity} of conservative systems, especially Hamiltonian systems $H$ in two dimensions where the existence of a complete leftward walk corresponds to a topologically ergodic orbit of the system, i.e. an orbit of $H$ visiting all the topology of the surface. We limit ourselves here to two dimensions, but generalisations of this theory should hold for autonomous Hamiltonian systems on a symplectic manifold of any dimension.

\bigskip
\noindent 
R\begin{tiny}ÉSUMÉ\end{tiny}. La fa\c{c}on la plus simple de faire d'un graphe fini connexe $G$ un système dynamique est de lui donner une polarisation, c'est-à-dire un ordre cyclique des arêtes incidentes à chaque sommet. L'espace de phase $\mathcal{P}(G)$ d'un graphe consiste en toutes les paires $(v,e)$ où $v$ est un sommet et $e$ une arête incidente à $v$. Elle donne donc la position et le vecteur initiaux.  Une telle condition est équivalente à une arête que l'on munit d'une orientation $e_{\mathcal O}$. Avec la polarisation, chaque donnée initiale mène à une marche à gauche en tournant à gauche à chaque sommet rencontré, ou en rebondissant s'il n'y a en ce sommet aucune autre arête. Une marche à gauche est appelée complète si elle couvre toutes les arêtes de $G$ (pas nécessairement dans les deux sens). Nous définissons la valence d'un sommet comme le nombre d'arêtes adjacentes à ce sommet, et la valence d'un graphe comme étant la moyenne des valences de ses sommets. Dans cet article, nous démontrons que si un graphe plongé dans une surface orientée fermée de genre $g$ possède une marche à gauche complète, alors sa valence est d'au plus $1 + \sqrt{6g+1}$. Nous prouvons de plus que ce résultat est optimal pour une infinité de genres $g$ et qu'il est asymptotiquement optimal lorsque $g \to + \infty$. Cela mène à des obstructions pour les plongements de graphes sur une surface. Puisque vérifier si un graphe polarisé possède ou non une marche à gauche complète s'opère en temps au plus $4N$, où $N$ est le nombre d'arêtes (il suffit de le vérifier sur les deux orientations d'une seule  arête donnée), cette obstruction est particulièrement efficace. Ce problème trouve sa motivation dans ses conséquences intéressantes sur ce que nous appellerons ici l'{\it ergodicité topologique} d'un système conservatif, par exemple un système hamiltonien H en dimension deux où l'existence d'une marche complète à gauche correspond à une orbite du système topologiquement ergodique, donc une orbite qui visite toute la topologie de la surface. Nous nous limitons ici à la dimension $2$, mais une généralisation de cette théorie devrait tenir pour des systèmes hamiltoniens autonomes sur une variété symplectique de dimension arbitraire. 
 \end{abstract}
 \end{otherlanguage}

\tableofcontents

\section{Introduction}

Dans cet article, nous établissons des conditions nécessaires optimales pour l'existence d'une «\,longue orbite\,», appelée ici \emph{marche complète}, dans la dynamique des \emph{marches à gauche} canoniquement définie dans tout graphe plongé dans une surface orientée fermée\footnote{Nos résultats s'étendent facilement au cas de graphes plongés dans des surfaces orientées obtenues par épointage de surfaces fermées.}. En employant des termes définis subséquemment dans cette introduction, notre résultat phare s'énonce comme suit\,:\medskip

\noindent \textbf{Théorème.} \emph{Soient $\Sigma$ une surface fermée orientée de genre $g$ et $G \subset \Sigma$ un graphe ordinaire plongé qui est le $1$-squelette d'une décomposition cellulaire de $\Sigma$. Si $G$ (muni de sa polarisation induite) possède une marche à gauche complète, alors la valence moyenne $V = 2A/S$ de $G$ satisfait l'inégalité $V \le 1 + \sqrt{6g+1}$. Cette inégalité est saturée pour une infinité de $g$, notamment pour tous les $g \ge 4$ de la forme $(S-1)(S-3)/6$ où $S \, \equiv \, 7 \mbox{ mod } 12$ est un nombre premier. De plus, cette inégalité est asymptotiquement optimale dans la limite $g \to + \infty$, au sens où pour tout $g$ assez grand, il existe un graphe $G_g \subset \Sigma_g$ tel que $V(G_g) = \sqrt{6g} + o(\sqrt{g})$.}
\medskip

\noindent L'inégalité ci-dessus découle aisément de la formule exprimant la caractéristique de Descartes--Euler de $\Sigma$ comme étant la somme alternée des nombres de Betti de la décomposition cellulaire de $\Sigma$ associée au graphe $G$. La difficulté du Théorème réside ainsi surtout dans ses énoncés d'optimalité. Notre démonstration de la saturation de l'inégalité pour les $g$ listés consiste en la construction de polarisations explicites appropriées pour les graphes complets à $S$ sommets (où $S \equiv 7 \mbox{ mod } 12$ est premier), construction qui repose sur les systèmes de triples de Steiner produits par Skolem et par O'Keefe \cite{S2, O}. L'optimalité asymptotique de l'inégalité résulte quant à elle du fait précédent, d'un résultat de Baker--Harman--Pintz \cite{BHP} sur la répartition des nombres premiers congruents à $7 \mbox{ mod } 12$ et d'une opération de somme connexe sur les paires $(\Sigma_g, G_g)$ qui permettent de définir les graphes appropriés $G_g \subset \Sigma_g$ récursivement sur $g$.

La motivation derrière notre étude des graphes polarisés qui possèdent une marche complète vient de la possibilité de réduire l'étude qualitative des systèmes dynamiques dans une surface à de la combinatoire définie sur un squelette de celle-ci. Par exemple, étant donné un graphe plongé dans une surface orientée qui soit le $1$-squelette d'une décomposition cellulaire de la surface, alors la dynamique des marches à gauche induite dans le graphe peut servir d'approximation pour la dynamique hamiltonienne d'un hamiltonien $H$ défini sur la surface et ayant ledit graphe pour ensemble de niveau. Dans ce cas, la présence d'une marche complète sur le graphe implique l'existence d'une orbite hamiltonienne \emph{topologiquement ergodique}, c'est-à-dire d'une orbite qui intersecte tous les lacets non contractiles de la surface. Le concept d’ergodicité topologique étant intermédiaire entre celui de système intégrable et celui de système ergodique, il s'agit d'un concept intéressant pour l'étude des hamiltoniens rencontrés génériquement. La présence d'une marche à gauche complète dans un graphe de niveau connexe et cellulaire apparaît alors comme une manière utile et efficace d'attester de l'ergodicité topologique de certains hamiltoniens.

\subsection{Notions préalables}

Soit un graphe $G$ (qui contient possiblement des boucles ou de multiples arêtes entre deux sommets). $G$ est \emph{ordinaire} ou \emph{simple} s'il n'a aucun cycle de longueur $1$ ou $2$, et il est \emph{généralisé} sinon. Nous supposerons toujours que $G$ est généralisé, fini et connexe, à moins d'une mention explicite du contraire.

Nous notons $\mathcal{S}(G)$ l'ensemble des sommets de $G$, $S = S(G)$ la cardinalité de $\mathcal{S}(G)$ (c'est-à-dire, le nombre de sommets dans $G$), $\mathcal{A}(G)$ l'ensemble des arêtes (non orientées) de $G$ et $A = A(G)$ la cardinalité de $\mathcal{A}(G)$ (c'est-à-dire, le nombre d'arêtes dans $G$). Nous définissons l'\emph{espace de phase} associé au graphe $G$ comme étant l'ensemble $\mathcal{P}(G)$ des arêtes orientées de $G$, soit encore l'ensemble des demi-arêtes de $G$. De manière équivalente, $\mathcal{P}(G)$ est l'ensemble des paires dont la première composante est un sommet et la seconde est une arête incidente à ce sommet, d'où le nom «\,espace de phase\,», car la position et le moment sont donnés comme conditions initiales.

Une \emph{polarisation} $P$ de $G$ est la donnée pour tout $p \in \mathcal{S}(G)$ d'un ordre cyclique sur les arêtes orientées basées en $p$. Un \emph{graphe polarisé} est une paire $(G,P)$. Etant donné $(G,P)$, nous obtenons une dynamique « des marches à gauche »\footnote{Nous parlerons parfois de « la marche à gauche sur $G$ » pour désigner l'ensemble de la dynamique.} sur l'espace de phase $G$ via l'application $\tau = \tau_{(G,P)} : \mathcal{P}(G) \to  \mathcal{P}(G)$ donnée par $\tau((o,p)) = (p,q)$ où $(p,q)$ est l'arête orientée qui suit l'arête $(p,o)$ dans l'ordre cyclique des arêtes basées en $p$. Géométriquement, la dynamique est donnée par la prescription suivante\,: en arrivant au sommet $p$ via l'arête $(o,p)$, il s'agit de «\,tourner à gauche\,» pour emprunter l'arête $(p,q)$. Si l'on arrive à un sommet qui ne contient que l'arête d'arrivée, la dynamique prescrit de rebondir au sommet et de rebrousser chemin le long de la même arête\footnote{Comme on suit alors la même arête en sens opposé, on retourne au sommet précédent\,; tourner à gauche a l'effet, en ce sommet précédent, de suivre la seconde arête à gauche de celle qui avait mené à ce sommet précédemment. En d'autres termes, un sommet qui n'a qu'une seule arête est un élément neutre et peut moralement être supprimé aussi bien que l'arête qui y mène.}.

Nous notons $\mathcal{F}(G)$ l'ensemble des orbites de la dynamique induite par $P$ et $F = F(G,P)$ la cardinalité de $\mathcal{F}(G)$ (c'est-à-dire, le nombre de marches à gauche). Nous définissons la \emph{caractéristique d'Euler} du graphe polarisé $(G,P)$ par 
\[\chi = \chi(G,P) := S - A + F \, .\]
Il s'avère que ce nombre est pair et vaut au maximum $2$  \cite[Theorem 10.1.2]{HR}. Nous définissons le \emph{genre} du graphe polarisé $(G, P)$ comme étant
\[ \gamma := g(G,P) := 1 - \dfrac{\chi(G,P)}{2} \, . \]

Observons qu'un graphe $G$ plongé dans une surface orientée $\Sigma$ hérite d'une polarisation $P$ induite par l'orientation de $\Sigma$. Nous disons alors que le plongement $\phi : (G, P) \to \Sigma$ est \emph{polarisé}\,; nous dirons parfois simplement que le plongement $\phi : G \to \Sigma$ est polarisé lorsque la polarisation $P$ sur $G$ est sous-entendue. Un plongement $\phi : G \to \Sigma$ est \emph{cellulaire} si $\Sigma \setminus G$ est une union disjointe de $2$-cellules ouvertes.  Le théorème fondamental des plongements polarisés, démontré en toute rigueur par Youngs \cite{Y}, implique que toute polarisation $P$ sur $G$ est induite par un plongement cellulaire de $G$ dans une surface\,:
\begin{thm}[Théorème fondamental]\label{thm-ThmFond}
Soient $(G,P)$ un graphe polarisé et $\Sigma_g$ une surface fermée orientée de genre $g$. Il existe un plongement polarisé $\phi : G \to \Sigma_g$ si et seulement si
\[ \chi(G,P) \ge \chi(\Sigma_g) = 2 - 2g \, .  \]
De plus, l'égalité a lieu si et seulement si le plongement polarisé $\phi$ est cellulaire.
\end{thm}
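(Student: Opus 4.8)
The plan is to realize the polarization $P$ geometrically as an oriented surface with boundary and then to compare, via Euler characteristics, an arbitrary polarized embedding of $G$ against this canonical model. The central object is the \emph{ribbon graph} (or fat graph) $N = N(G,P)$ obtained by thickening each vertex $p$ into a disk $D_p$ and each edge into a band, the cyclic order $P$ at $p$ prescribing the order in which the bands are glued around $\partial D_p$ (loops and multiple edges, which $G$ may carry, cause no difficulty). Because $P$ is a rotation system, $N$ is a compact \emph{oriented} surface with boundary that deformation retracts onto $G$, so $\chi(N) = S - A$. The key combinatorial identity to establish is that the boundary circles of $N$ are in canonical bijection with the orbits of $\tau_{(G,P)}$: tracing $\partial N$ through a band into the disk $D_p$, one leaves along the next band in the cyclic order at $p$, which is exactly the rule $\tau((o,p)) = (p,q)$. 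Hence $\partial N$ has precisely $F = F(G,P)$ components. I expect this identification --- fixing the orientation convention so that ``tourner à gauche'' matches boundary-tracing, and recognizing $N$ as a regular neighborhood of any polarized embedding via uniqueness of regular neighborhoods --- to be the main technical point; it is exactly what Youngs \cite{Y} settles rigorously.

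Next I would cap off each of the $F$ boundary circles of $N$ with a $2$-cell. The result $\Sigma_\gamma$ is a closed oriented surface carrying a cellular polarized embedding of $G$ with $S$ vertices, $A$ edges and $F$ faces, so that $\chi(\Sigma_\gamma) = S - A + F = \chi(G,P)$ and $\Sigma_\gamma$ has genus $\gamma = 1 - \chi(G,P)/2 = g(G,P)$; since $\gamma \ge 0$ this re-proves $\chi(G,P) \le 2$. This produces the cellular embedding that will realize the equality case.

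For the inequality and the cellularity criterion, let $\phi : G \to \Sigma_g$ be any polarized embedding and set $W = \Sigma_g \setminus \operatorname{int} N$, where $N$ is a regular neighborhood of $\phi(G)$; thus $W$ is a compact oriented surface whose boundary circles are the $F$ circles $\partial N$. As $\chi(\partial N) = 0$, additivity of the Euler characteristic gives $\chi(\Sigma_g) = \chi(N) + \chi(W) = (S - A) + \chi(W)$. Each connected component of $W$, of genus $h$ with $b \ge 1$ boundary circles, has $\chi = 2 - 2h - b \le 1$, with equality exactly when $h = 0$ and $b = 1$, i.e. when the component is a disk; summing over components yields $\chi(W) \le F$, with equality if and only if $W$ is a disjoint union of $F$ disks, that is if and only if $\phi$ is cellular. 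Hence $\chi(\Sigma_g) \le \chi(G,P)$, which is the asserted inequality $\chi(G,P) \ge \chi(\Sigma_g)$, and equality holds precisely when $\phi$ is cellular.

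It remains to prove existence whenever $\chi(G,P) \ge 2 - 2g$, equivalently $g \ge \gamma$. I would start from the cellular embedding into $\Sigma_\gamma$ constructed above and perform $g - \gamma$ successive handle attachments, each inside a single face and supported in a disk disjoint from $G$ (a connected sum with a torus). Each step raises the genus by one while leaving $G$ and its induced polarization $P$ unchanged, producing a polarized embedding $G \to \Sigma_g$ --- cellular exactly when $g = \gamma$. Together with the previous paragraph this establishes both directions of the equivalence and the characterization of the equality case.
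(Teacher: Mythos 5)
Your proposal is correct, and its core is exactly the construction the paper itself sketches right after the statement: the paper's one-sentence idea\,---\,use the left walks determined by $P$ as gluing data for the boundaries of $F$ open $2$-cells along $G$ to obtain $\Sigma_\gamma$\,---\,is precisely your ribbon-graph-plus-capping model, and, like you, the paper defers the rigorous justification (in particular the identification of the boundary circles of $N(G,P)$ with the orbits of $\tau_{(G,P)}$) to Youngs \cite{Y}. Where you go beyond the paper is that the paper offers no proof of the theorem at all, whereas you supply the full logical skeleton: the additivity $\chi(\Sigma_g) = \chi(N) + \chi(W)$ for an arbitrary polarized embedding, the analysis of the complement giving $\chi(W) \le F$ with equality exactly for a disjoint union of $F$ disks, and the handle-attachment step producing (non-cellular) polarized embeddings for every $g > \gamma$; so your treatment is at least as complete as the paper's and structurally more informative. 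Two steps deserve one more line each. First, summing $\chi \le 1$ over the components of $W$ only gives $\chi(W) \le c$, the number of components; to conclude $\chi(W) \le F$ you also need $c \le F$, which holds because each component has $b_i \ge 1$ boundary circles and $\partial W = \partial N$ has exactly $F$ of them (equivalently, compute directly $F - \chi(W) = 2\sum_i (h_i + b_i - 1) \ge 0$, with equality forcing $h_i = 0$, $b_i = 1$ for all $i$ and hence $c = F$). Second, cellularity is a statement about $\Sigma_g \setminus G$ rather than about $W = \Sigma_g \setminus \operatorname{int} N$: each component of $\Sigma_g \setminus G$ is a component of $W$ with half-open collars attached along its boundary circles, so it is an open $2$-cell if and only if the corresponding component of $W$ is a closed disk. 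Both points are routine to fill in and do not affect the correctness of your argument.
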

\noindent L'idée essentielle, que nous emploierons ailleurs dans l'article, consiste à utiliser les marches à gauche déterminées par la polarisation $P$ comme données de recollement des bords de $F$ $2$-cellules ouvertes le long du graphe $G$ afin d'obtenir une surface orientée fermée $\Sigma_{\gamma}$.

Notre attention dans cet article est surtout portée vers les graphes polarisés qui admettent une \emph{marche à gauche complète}, c'est-à-dire une marche à gauche qui emprunte chaque arête (non orientée) de $G$ au moins une fois. (Nous désignerons la marche complète simplement par le sigle \emph{MC}\,; toutes les autres marches à gauche sont dites \emph{non complètes}\footnote{Il existe des graphes polarisés $(G,P)$ qui ont deux marches à gauche  complètes. Pour ceux-ci, nous désignons arbitrairement l'une des deux marches comme MC et l'autre comme MNC.} et désignées simplement par le sigle \emph{MNC}.) L'un des principaux intérêts pour l'étude des marches complètes se trouve dans le fait qu'elles sont étroitement liées à une propriété dynamique que nous nommons \emph{ergodicité topologique}\,:

    \begin{defn}\label{defn-topo_ergo} Soit $W : I \to \Sigma$ un chemin continu dans une surface $\Sigma$ de genre $g \ge 1$. Nous disons que $W$ est {\it topologiquement ergodique} si l'image de $W$ croise l'image de tout lacet non contractile dans $W$. 
    \end{defn}
    
    \begin{prop}  Soient $G \subset \Sigma$ un graphe connexe plongé cellulairement dans une surface fermée orientée de genre $g \ge 1$ et $W$ une marche à gauche complète de $G$ pour la polarisation induite par $\Sigma$. Alors $W$ est topologiquement ergodique.
    \end{prop}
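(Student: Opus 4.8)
The plan is to argue by contraposition, using two facts: completeness forces the image of the walk $W$ to be the whole graph $G$, while cellularity forces every connected component of $\Sigma \setminus G$ to be simply connected. The first, elementary observation underlies everything: since $W$ traverses every (non oriented) edge of $G$ at least once, its image, viewed as a point set in $\Sigma$, is the union of all edges of $G$; as $G$ is connected and $g \ge 1$ forbids it from being a single point, every vertex is incident to an edge, so this point set is all of $G$. Thus establishing the topological ergodicity of $W$ reduces to showing that $G$ itself meets the image of every non-contractible loop in $\Sigma$.

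Next I would take an arbitrary loop $\gamma : S^1 \to \Sigma$ whose image is \emph{disjoint} from $G$ and show that it is contractible. Its image then lies in $\Sigma \setminus G$, which by the cellularity hypothesis is a disjoint union of open $2$-cells, each homeomorphic to an open disk. Being the continuous image of $S^1$, the image of $\gamma$ is connected, hence contained in a single such cell $D$; since $D$ is simply connected, $\gamma$ is null-homotopic in $D$, and therefore null-homotopic in $\Sigma$ through the inclusion $D \hookrightarrow \Sigma$. Taking the contrapositive, every non-contractible loop must meet $G$, that is, must meet the image of $W$ — which is exactly the assertion of topological ergodicity.

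The argument is short, and I do not expect a genuine obstacle: its entire content is the interplay between completeness (which yields the identity $\operatorname{im} W = G$) and cellularity (which makes the complement a disjoint union of disks). The only point deserving care is the set-theoretic claim that the image of the walk is all of $G$, and not merely a proper subgraph, where completeness must be used in full and one should check that no vertex is left uncovered; once that is granted, the topological conclusion is immediate.
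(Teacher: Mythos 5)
Votre démonstration est correcte et suit essentiellement le même argument que celle de l'article\,: la cellularité force tout lacet disjoint de $G$ à être contenu dans une seule $2$-cellule ouverte, donc à être contractile, tandis que la complétude de la marche garantit que l'image de $W$ est $G$ tout entier. La seule différence est que vous explicitez le petit point selon lequel aucun sommet n'échappe à l'image de $W$ (puisque $G$, connexe et non réduit à un point en genre $g \ge 1$, n'a pas de sommet isolé), détail que l'article laisse implicite.
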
 
    
   \begin{proof} Soit $\alpha$ un lacet non contractile dans $\Sigma$. Puisque le graphe est cellulaire et l'image de $\alpha$ est connexe, il en résulte que l'image de $\alpha$ intersecte $G$. Or la marche complète visite toutes les arêtes du graphe, donc $W \cap \alpha$ est non vide.
   \end{proof}

En fait, il y a une certaine réciproque à cette proposition\,: si un graphe plongé $G \subset \Sigma$ possède une marche à gauche $W$ qui est topologiquement ergodique, alors $G$ est plongé cellulairement. De plus, si $G' \subset G$ dénote le sous-graphe parcouru par $W$, alors $W$ est une marche à gauche complète pour $G'$. Nous voyons donc que les graphes cellulaires qui possèdent une marche complète sont les exemples minimaux de graphes plongés admettant une marche topologiquement ergodique.

\subsection{Principaux résultats}

Dans cet article, nous identifions des conditions qu'un graphe plongé cellulairement dans une surface doit satisfaire afin d'admettre une marche à gauche complète. Nos principaux résultats établissent des bornes supérieures -- essentiellement optimales -- sur les valences moyennes que de tels graphes peuvent avoir en fonction du genre $g$ de la surface $\Sigma$ dans laquelle ils sont plongés.

L'existence de telles inégalités est toutefois contrainte par des situations comme celle présentée dans la \cref{fig-multilacets} \,: disposer un nombre arbitraire de lacets à un sommet les uns après les autres n'affecte pas l'existence d'une marche complète, mais permet d'élever la valence \emph{totale} moyenne au-dessus de toute valeur.

Afin de contourner ce problème, nous explorons deux stratégies\,:
\begin{enumerate}
\item La première ne considère que les \emph{graphes homotopiques}, où une condition homotopique vient contraindre les graphes plongés étudiés.

\item La seconde utilise la \emph{valence réduite moyenne} plutôt que la valence totale moyenne, où un compte différent de la valence permet de cerner les contraintes pertinentes imposées par la présence d'une marche complète.
\end{enumerate}
Soulignons que les graphes ordinaires sont homotopiques et que leur valence réduite coïncide avec leur valence totale\,; ces graphes sont donc couverts par les deux stratégies.

  \begin{figure}[h]
  \centering
\includegraphics[width=0.3\textwidth]{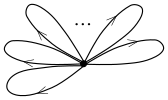}
\caption{}\label{fig-multilacets}
\end{figure}

\subsubsection{Graphes homotopiques}

  \begin{defn}
    Etant donné un entier $S \ge 1$, un \emph{$S$-graphe} $G$ est un graphe formé de $S$ sommets et d'un nombre arbitraire fini d'arêtes et de lacets entre ses sommets. C'est un \emph{monographe} si $S=1$.  Un $S$-graphe plongé dans une surface $\Sigma$ est dit \emph{homotopique} si (i) aucun lacet de $G$ n'est contractile et si (ii) pour chaque paire d'arêtes $e$ et $f$ entre deux mêmes sommets, le lacet $e \overline{f}$ n'est pas contractile.
    \end{defn}

    Un premier intérêt de considérer les monographes cellulaires homotopiques tient au fait que nous avons un théorème optimal les concernant\,:

    \begin{thm}\label{Thm-monographe}  Soit $G$ un monographe homotopique plongé dans une surface orientée fermée de genre $g$. Si $G$ possède une marche à gauche complète, alors le nombre de lacets est au plus $3g$. Ce résultat est optimal (sharp) puisque cette borne est réalisée.
    \end{thm}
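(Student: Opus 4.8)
Le Théorème~\ref{Thm-monographe} comporte deux parties : la borne supérieure (le nombre de lacets est au plus $3g$) et son optimalité (la borne est atteinte). Pour la borne supérieure, mon plan est d'exploiter la caractéristique d'Euler du graphe polarisé. Un monographe homotopique possède $S=1$ sommet et, disons, $L$ lacets, donc $A=L$ arêtes. Supposons qu'il admette une marche à gauche complète. Comme cette marche parcourt chaque arête, le sous-graphe qu'elle traverse est $G$ tout entier ; l'idée est donc d'obtenir une minoration du nombre d'orbites $F$ et une relation entre $F$, $S$, $A$ et le genre $g$. Puisque le plongement est cellulaire (hypothèse implicite via le genre $g$), le Théorème fondamental (\cref{thm-ThmFond}) donne $\chi(G,P)=S-A+F=2-2g$, soit $1-L+F=2-2g$, c'est-à-dire $L=F+2g-1$.

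Le cœur de l'argument consiste alors à contrôler $F$. Le point clé sera d'établir que la \emph{condition homotopique} force $F\ge 1$ et, plus finement, contraint le nombre de marches \emph{à gauche} par rapport au nombre total de lacets. Concrètement, l'hypothèse qu'aucun lacet n'est contractile et qu'aucun bigone $e\overline{f}$ n'est contractile empêche les $2$-cellules de la décomposition d'être bordées par une seule demi-arête (monogones) ou par deux demi-arêtes formant un bigone contractile. Chaque $2$-cellule (donc chaque orbite de marche à gauche, via le recollement décrit après le Théorème fondamental) doit ainsi avoir un bord de longueur combinatoire $\ge 3$ en termes de demi-arêtes. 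Comme il y a $2A=2L$ demi-arêtes réparties sur les $F$ orbites, on obtient $2L\ge 3F$, d'où $F\le 2L/3$. En combinant avec $L=F+2g-1$, je m'attends à éliminer $F$ pour obtenir $L=F+2g-1\le 2L/3+2g-1$, soit $L/3\le 2g-1<2g$ et donc $L\le 6g-3$ ; ce n'est pas encore la borne voulue. L'obstacle principal sera précisément d'améliorer ce comptage : il faudra tenir compte du fait que pour un monographe, la marche \emph{complète} unique joue un rôle distingué et que les marches non complètes doivent elles aussi border des faces de grande longueur, ce qui devrait resserrer l'inégalité vers $L\le 3g$.

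Pour franchir cet obstacle, mon plan est de raffiner l'analyse en distinguant le rôle de la MC. La marche complète parcourt les $2L$ demi-arêtes au moins une fois, mais comme elle est \emph{complète}, elle visite chaque lacet dans (au moins) l'un des deux sens, ce qui impose que sa longueur combinatoire est $\ge L$ (et en fait $\ge 2L$ si elle passe deux fois par chaque lacet). L'idée est de montrer que la présence d'une MC force une seule grande face, tandis que la condition homotopique force toutes les autres faces à avoir une longueur $\ge 3$. En notant que la MC absorbe au moins $L$ demi-arêtes et que les $F-1$ autres faces partagent les demi-arêtes restantes avec une longueur minimale contrôlée par l'hypothèse homotopique, je compte resserrer l'inégalité de comptage de façon à obtenir $L\le 3g$ exactement. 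Il faudra vérifier soigneusement que chaque lacet non contractile contribue effectivement de manière rigide à la longueur des bords des cellules voisines.

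\textbf{Optimalité.} Pour la réalisation de la borne, mon plan est de construire explicitement, pour chaque $g\ge 1$, un monographe homotopique à $3g$ lacets plongé cellulairement dans $\Sigma_g$ et admettant une marche à gauche complète. Le modèle naturel est le plongement standard du bouquet de $2g$ cercles qui réalise $\Sigma_g$ comme recollement d'un $4g$-gone via le mot $\prod_{i=1}^{g}[a_i,b_i]$ ; il faudra cependant l'enrichir pour atteindre $3g$ lacets tout en préservant le caractère homotopique et l'existence d'une unique face (donc $F=1$, d'où via $L=F+2g-1$ on retrouve bien $L=2g$ pour le bouquet minimal, ce qui suggère qu'atteindre $3g$ demande $F=g+1$ faces). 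La vérification finale consistera à exhiber la polarisation explicite, à calculer le nombre d'orbites $F$ pour confirmer via $\chi=2-2g$ que le plongement est cellulaire de genre $g$, et à tracer la marche à gauche pour certifier qu'elle est complète. Je m'attends à ce que cette construction, bien que technique, soit routinière une fois le bon modèle identifié ; la difficulté conceptuelle réside entièrement dans l'obtention de la borne fine $3g$ plutôt que $6g-3$.
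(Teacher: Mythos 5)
Votre borne supérieure n'est démontrée que pour les plongements \emph{cellulaires}, et c'est là une lacune réelle : le \cref{Thm-monographe} ne suppose pas la cellularité (contrairement au théorème de l'introduction), et votre affirmation qu'elle serait \emph{implicite via le genre $g$} est fausse. Pour un plongement non cellulaire, les marches à gauche ne bordent pas des disques, et la condition homotopique n'interdit alors nullement les marches de longueur $1$ ou $2$. Exemple concret : un lacet plongé non contractile dans le tore est un monographe homotopique ($L=1$), non cellulaire, possédant une marche complète, et ses deux marches à gauche sont toutes deux de longueur $1$ (ce sont les deux composantes de bord d'un voisinage annulaire du lacet) ; sur cet exemple, votre inégalité de comptage $2L \ge L + 3(F-1)$ s'écrit $2 \ge 4$ et est donc violée. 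On ne peut pas non plus se ramener au cas cellulaire en remplaçant $g$ par le genre polarisé $\gamma$ : le plongement cellulaire de $(G,P)$ dans $\Sigma_\gamma$ n'est en général plus homotopique, et la borne $L \le 3\gamma$ est d'ailleurs fausse (dans l'exemple ci-dessus $\gamma = 0$ et $L=1$). C'est précisément pour cette raison que le texte introduit l'invariant $g_{hom}$ et la \cref{Prop-HomGenusComp}.

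L'idée qui manque à votre preuve est donc ce lemme de chirurgie (\cref{Prop-HomGenusComp}) : si $(G,P)$ se plonge de façon homotopique dans $\Sigma_g$, alors $g \ge \gamma(G,P) + (\ell_1+\ell_2)/2$, où $\ell_i$ compte les marches de longueur $i$ ; on le prouve en découpant le long d'une courbe plongée parallèle à chaque marche courte (non contractile par homotopicité), ce qui fait chuter le genre. Le comptage général donne alors, pour un monographe, $L \le 3\gamma + \ell_1 + \ell_2/2 \le 3g - \ell_1/2 - \ell_2 \le 3g$ : le déficit de genre compense exactement la contribution des marches courtes. Dans le cas cellulaire, votre raffinement (la MC a longueur $\ge L$, les autres marches longueur $\ge 3$, d'où $F \le L/3 + 1$ puis $L \le 3g$) est correct et coïncide pour l'essentiel avec le comptage du texte, la cellularité rendant superflu le traitement des $\ell_i$. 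Enfin, pour l'optimalité, votre plan repose sur le bon modèle et la bonne numérologie ($F = g+1$ faces), mais il ne construit pas l'exemple ; le texte (\cref{Prop-HomGenusOptimal}) l'exhibe explicitement : au bouquet de $2g$ lacets donné par le $4g$-gone, on ajoute $g$ diagonales $d_j$ reliant la source de $a_j$ à la cible de $b_j$, c'est-à-dire $g$ arêtes parallèles aux chaînes $a_j b_j$ ; l'opération d'arête parallèle préserve automatiquement la marche complète, et les classes d'homologie $[a_j]+[b_j]$, distinctes et non nulles, garantissent l'homotopicité.
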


\noindent Il convient de comparer ce résultat au nombre maximal de lacets d'un monographe homotopique, sans demander qu'il y ait une marche complète (une preuve est fournie dans \cref{app_invariants} pour le lecteur intéressé):

    \begin{prop}\label{prop-Cg} Soit $C(g)$ le nombre maximal de lacets, tous passant par le même point $p$ d'une surface orientable fermée de genre $g$, tous plongés et disjoints deux à deux sauf en $p$, et représentant des classes non nulles en homologie et toutes différentes. Alors $C(g) = 6g-3$.
    \end{prop}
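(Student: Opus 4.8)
The plan is to prove $C(g) = 6g-3$ by establishing a matching upper bound and construction. Let $L$ be the number of loops through $p$, all embedded, pairwise disjoint except at $p$, representing distinct nonzero homology classes. First I would consider the complement $\Sigma \setminus (\bigcup_i \ell_i)$, where the $\ell_i$ are the loops viewed as a wedge of circles embedded in $\Sigma$. The key observation is that cutting $\Sigma$ along this wedge yields a surface whose boundary components correspond to the ``corners'' around $p$, and for the configuration to make sense on a genus $g$ surface I would demand (or arrange) that the complement consists of disks, i.e. that the wedge is a cellular $1$-skeleton. This places us in the setting of a polarized monograph with $S=1$, $A=L$ loops, and some number $F$ of faces; the Descartes--Euler formula gives $\chi(\Sigma) = 2-2g = S - A + F = 1 - L + F$, so $L = 2g-1+F$.

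Next I would impose the homological constraints. The condition that all $L$ classes $[\ell_i] \in H_1(\Sigma;\Q)$ are nonzero and pairwise distinct, combined with the disjointness, is what caps $L$ from above. The dimension of $H_1(\Sigma;\Q)$ is $2g$, so there are only finitely many independent cycles, but distinctness in homology is a weaker condition than independence, so I expect the governing constraint to come instead from the intersection pairing: pairwise disjoint embedded curves have zero algebraic intersection with one another, so their classes span an isotropic subspace of $H_1(\Sigma;\Q)$ with respect to the symplectic intersection form. An isotropic subspace has dimension at most $g$. This bounds the \emph{rank} of the family by $g$, but the loops can still be distinct in homology even when their classes are linearly dependent, so I must count how many distinct nonzero classes can arise inside a rank-$g$ isotropic subspace subject to the embeddedness/disjointness of representatives. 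This is precisely where the number $6g-3$ should emerge, and I would expect to extract it by maximizing $F$ (equivalently $L$) over all cellular configurations, using the Euler relation $L = 2g-1+F$ together with a combinatorial bound on $F$ coming from the structure of the faces of the complement.

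The cleanest route to the bound on $F$ is to analyze the faces directly: each face is a disk bounded by an edge-path in the wedge, and since the graph has a single vertex $p$, each face boundary is a closed walk in the loops. Requiring the distinct-nonzero-homology condition forbids certain degenerate faces (e.g. monogon or bigon faces would force two loops to be homologous or a loop to be nullhomologous), so every face must have at least three sides in its boundary walk. A standard double-counting of edge-sides, $\sum_{\text{faces}} (\text{sides}) = 2A = 2L$ since each loop contributes two sides to the boundary incidences, combined with the minimum of three sides per face, yields $3F \le 2L$. Feeding this into $F = L - 2g + 1$ gives $3(L-2g+1) \le 2L$, hence $L \le 6g - 3$. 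This is the upper bound.

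For sharpness I would exhibit a configuration realizing $L = 6g-3$; the double-counting inequality is tight exactly when every face is a triangle (a $3$-gon) and the complement is fully cellular, so the construction reduces to finding a one-vertex triangulation-like cellular decomposition of $\Sigma_g$ whose $L = 6g-3$ loops have pairwise distinct nonzero homology classes. For $g=1$ this gives $L=3$, the familiar three-curve configuration on the torus ($a$, $b$, $ab$), which I would use as the base case and then propagate to higher genus by a connected-sum or handle-attachment argument mirroring the recursive constructions used elsewhere in the paper. The main obstacle I anticipate is not the Euler-characteristic bookkeeping but the homological step: carefully justifying that no two faces force a coincidence or vanishing of homology classes, and that the minimum face-size is genuinely three rather than two, since a naive wedge of circles need not be cellular and a bigon face does not by itself contradict distinctness unless one tracks orientations in the boundary walk. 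Handling these degenerate faces correctly, and confirming that the extremal all-triangle configuration can actually be realized with \emph{all distinct} classes (not merely nonzero ones), is where the real work lies.
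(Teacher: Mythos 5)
Your upper-bound argument is sound in outline but takes a genuinely different route from the paper's. The paper (\cref{app_invariants}) organizes everything around \emph{Poincaré dual pairs}: it splits into cases according to how many pairs $(a,b)$ with $[a]\cdot[b]=\pm1$ the family contains, cuts the surface along those pairs, and bounds what remains by the disjoint-loop invariant $B(g')=3g'-3$ (a pants-decomposition count), arriving at $C(g)\le \max_{0\le g'\le g}\,(6g-3g'-3)=6g-3$. Your route --- the Descartes--Euler relation $L=2g-1+F$ plus the side count $3F\le 2L$ --- is more elementary and gives the bound in a few lines. But as written it has two soft spots. First, your aside on the intersection form is false: the loops are disjoint only \emph{away from} $p$, and two embedded loops crossing transversally at $p$ can satisfy $[a]\cdot[b]=\pm1$ (meridian and longitude of the torus); the classes of such a family need not span an isotropic subspace --- on the contrary, extremal families contain $g$ dual pairs spanning all of $H_1(\Sigma_g)$, and this is exactly the structure the paper's proof exploits. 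You abandon this line, so nothing downstream breaks, but it should be deleted. Second, you cannot ``demand (or arrange)'' that the complement be cellular: the family is given, not chosen. The repair is standard and cheap: writing $2-2g=1-L+\sum_i\chi(F_i)$ over the complementary components, one has $\chi(F_i)\le 1$ with equality only for disk faces and $\chi(F_i)\le 0$ otherwise, so only disk faces need the ``at least three sides'' argument, and $3F_d\le 2L$ still yields $L\le 6g-3$. Note also that excluding bigons forces the unoriented reading of ``toutes différentes'' (i.e. $[a]\ne\pm[b]$): a bigon with boundary walk $ab$ only gives $[a]=-[b]$, and under a purely oriented reading the proposition itself would fail, since the six classes $\pm[a]$, $\pm[b]$, $\pm([a]+[b])$ can be realized by loops on the torus meeting only at $p$.

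The genuine gap is in your lower bound. A connected-sum (or handle-attachment) recursion cannot reach $6g-3$: the bound is not additive in $g$, and wedging an extremal family on $\Sigma_{g_1}$ with one on $\Sigma_{g_2}$ produces only $(6g_1-3)+(6g_2-3)=6(g_1+g_2)-6$ loops, three short of the target. So the propagation you propose undershoots by $3$ at every step; to salvage it you would have to insert three extra loops running through the connect-sum neck and re-verify embeddedness, disjointness and homological distinctness, which is the entire difficulty rather than a routine imitation of the paper's recursive constructions (those succeed for the monograph bound $3g$ of \cref{Thm-monographe}, which \emph{is} additive, and in \cref{thm-asymptotique}, where an $o(\sqrt{g})$ error absorbs the deficit). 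The direct construction is both simpler and the one the paper intends: realize $\Sigma_g$ as a $4g$-gon and add $4g-3$ pairwise non-crossing diagonals (a triangulation of the polygon), giving $2g+(4g-3)=6g-3$ loops in the quotient. Even here, the homology verification --- which you rightly identify as the real work --- is sensitive to the choices. With the standard word $a_1b_1\bar a_1\bar b_1a_2\cdots$ and $g\ge 2$, the fan of diagonals from a single vertex fails: the diagonal closing off the first handle is nullhomologous. With the word $c_1c_2\cdots c_{2g}\bar c_1\bar c_2\cdots\bar c_{2g}$ used by the paper in \cref{Prop-HomGenusOptimal}, the fan from the initial vertex works: the diagonal joining $v_1$ to $v_{k+1}$ represents $[c_1]+\cdots+[c_k]$ if $k\le 2g$, and $[c_{j+1}]+\cdots+[c_{2g}]$ if $k=2g+j$; these classes are nonzero, pairwise distinct, never one the negative of another (all coefficients equal $+1$), and distinct from the side classes $\pm[c_i]$. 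Supplying this computation is what closes the gap.
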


\noindent Nous constatons donc que l'exigence d'une marche complète divise grosso modo par $2$ le nombre de lacets.

En fait, nous démontrerons le résultat suivant, dont découle le \cref{Thm-monographe} lorsque $S=1$\,:

 \begin{thm}\label{Thm-Sgraphe}  Soient $S \ge 1$ et $G$ un $S$-graphe homotopique plongé dans une surface orientée fermée de genre $g$. Si $G$ possède une marche à gauche complète, alors le nombre d'arêtes est au plus $3g + \left\lfloor \dfrac{3(S-1)}{2}  \right\rfloor$. Ce résultat est optimal puisque cette borne est réalisée.
    \end{thm}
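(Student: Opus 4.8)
The plan is to read off the inequality from the Euler relation attached to the cellular embedding induced by the polarization, using the \emph{homotopique} hypothesis to force every left-walk to be long and the complete walk to swallow at least half of all the darts; the sharpness will then require producing an explicit extremal family. First I would record the arithmetic. Writing $F$ for the number of left-walks and $\gamma := 1 - \chi(G,P)/2$ for the genus of $(G,P)$, the identity $S - A + F = \chi(G,P) = 2 - 2\gamma$ yields $A = S - 2 + 2\gamma + F$. Since $G$ embeds polarially in $\Sigma_g$, \cref{thm-ThmFond} gives $\gamma \le g$; replacing the given embedding by the intrinsic cellular one in $\Sigma_\gamma$, I may assume the embedding is cellular with $\gamma = g$, so that the left-walks are exactly the boundary cycles of the $2$-cells and $A = S - 2 + 2g + F$.

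The heart of the upper bound is the claim that every left-walk traverses at least three darts. A walk of length $1$ is a monogon bounding a disc, that is a contractible loop, which is excluded by condition (i) of \emph{homotopique}. A walk of length $2$ is either a bigon on two parallel edges $e,f$, whose boundary $e\bar f$ bounds a disc and is therefore contractible — excluded by condition (ii) — or else a single edge traversed in both senses, which forces both its endpoints to have valence $1$, hence (by connectedness) forces $G$ to be a single edge, a degenerate case settled by direct inspection. A valence-$1$ vertex attached by a genuine bridge produces no short face, since its two darts sit consecutively inside a strictly longer walk. Thus every face has length $\ge 3$.

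Next I would exploit the complete walk. The walk $W$ is a single left-walk meeting every edge, so it uses at least one dart per edge and its length satisfies $\ell_W \ge A$. As the $2A$ darts are partitioned among the $F$ faces, the remaining $F-1$ faces share at most $2A - \ell_W \le A$ darts, and each of them has length $\ge 3$; hence $3(F-1) \le A$, i.e. $F \le A/3 + 1$. Substituting into $A = S - 2 + 2g + F$ gives $A \le S - 2 + 2g + A/3 + 1$, so $\tfrac{2}{3}A \le 2g + (S-1)$ and therefore $A \le 3g + \tfrac{3(S-1)}{2}$; since $A$ is an integer this sharpens to $A \le 3g + \lfloor 3(S-1)/2 \rfloor$, which is the asserted bound (and specializes to $\le 3g$ when $S=1$, recovering \cref{Thm-monographe}).

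The main obstacle will be the sharpness statement, which demands, for every $g$ and every $S$, an explicit homotopic $S$-graph carrying a complete walk and attaining equality everywhere above — so that, tracing the inequalities back, all $F-1$ auxiliary faces are forced to be triangles and $W$ meets each edge exactly once. A natural route is to begin from an extremal monograph on $\Sigma_g$ (one vertex with $3g$ essential loops, realized by a single complete walk of length $3g$ together with $g$ triangular faces) and to attach the remaining $S-1$ vertices by short bridges and parallel edges contributing exactly $\lfloor 3(S-1)/2\rfloor$ additional edges without raising the genus, the floor reflecting the parity of $S-1$ (vertices being added essentially two at a time, three edges per pair). The delicate part is to choose these local polarizations so that one simultaneously preserves the homotopic and cellular conditions and keeps a single complete walk saturating every inequality; I would carry this out recursively in $S$, checking at each step that the newly created faces are triangles and that no contractible loop or bigon is introduced.
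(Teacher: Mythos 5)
There is a genuine gap, and it is fatal to the upper bound as you have organized it: your central claim that \emph{every left-walk has length at least $3$} is false under the hypotheses of the theorem. The condition \emph{homotopique} is a property of the embedding $G \subset \Sigma_g$; it forbids loops and bigons that are contractible \emph{in $\Sigma_g$}, but it does not forbid left-walks of length $1$ or $2$ --- it only forces such walks to be essential curves. Concretely, take $G$ to be a single loop at one vertex, embedded as an essential circle in the torus ($S=1$, $g=1$, $A=1$). This graph is homotopique, its two left-walks both have length $1$, and each of them traverses the unique edge, hence is a complete walk. Your preliminary reduction --- replacing the given embedding by the intrinsic cellular one in $\Sigma_\gamma$ --- is also not available, because homotopique-ness does not transfer: a loop essential in $\Sigma_g$ may bound a disc in $\Sigma_\gamma$. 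In the example, $\chi(G,P) = 1 - 1 + 2 = 2$, so $\gamma = 0$: the cellular model lives in the sphere, where the loop \emph{is} contractible, and your intermediate bound $A \le 3\gamma + \lfloor 3(S-1)/2 \rfloor = 0$ is simply false. The inequality of the theorem genuinely involves the genus $g$ of the ambient surface of the homotopic embedding, not the intrinsic genus $\gamma$ of the polarized graph, so no argument that first trades $g$ for $\gamma$ can succeed. (The same problem occurs in length $2$: two parallel edges forming an essential cycle in the torus give a homotopique graph whose complete walks have length $2$.)

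What is missing is exactly the mechanism the paper uses to convert essentialness of short walks into genus, namely \cref{Prop-HomGenusComp}: for any polarized graph, $\gamma(G,P) + (\ell_1+\ell_2)/2 \le g_{hom}(G,P)$, where $\ell_i$ is the number of left-walks of length $i$ and $g_{hom}$ is the least genus of a homotopic polarized embedding. Its proof is a surgery: the embedded curve running just to the left of a walk of length $1$ or $2$ is homotopic to that walk, hence essential; cutting along it and capping with discs drops the genus by at least one while rendering at most two short walks contractible, and one iterates. The counting argument then keeps the terms $\ell_1,\ell_2$ rather than discarding them: in the presence of a complete walk, $F \le 1 + \ell_1 + \ell_2 + (A - \ell_1 - 2\ell_2)/3$, and the assumption $A \ge 3g + 3(S-1)/2 + 1$ leads to $3(g-\gamma) + 1 \le \ell_1 + \ell_2/2$; combined with the surgery inequality and $g_{hom} \le g$, this gives $\ell_1/2 + \ell_2 \le -1$, a contradiction. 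Note that your Euler-formula algebra and your dart count are precisely the case $\ell_1 = \ell_2 = 0$ of this computation, so your skeleton is compatible with the paper's --- but the essential short walks must be confronted, not excluded. As for sharpness, your sketch (the standard $3g$-loop monograph, then subdivision of one side and parallel edges to absorb the $S-1$ extra vertices) is indeed the route of \cref{Prop-HomGenusOptimal}, but as written it remains a sketch: the paper's construction subdivides $a_1$ into $S'$ edges ($S'$ the largest odd integer $\le S$), adds $(S'+1)/2$ parallel edges and $g-1$ diagonals, and verifies homotopique-ness by exhibiting distinct homology classes.
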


Un second intérêt de considérer les monographes cellulaires homotopiques tient au fait qu'ils peuvent souvent servir de point de départ pour la construction de graphes ordinaires à marche à gauche complète et optimaux grâce à l'application d'opérations appropriées sur les graphes que nous décrivons à la \cref{sec-operations}. Nous exemplifions ce fait dans la \cref{sec-generalise}.

\subsubsection{Valences réduites des graphes généralisés} 

\begin{defn}
Soit un graphe généralisé $G$. La \emph{valence (totale) en un sommet $v \in G$}, notée $V(G)_v$, est la somme des arêtes incidents à $v$. Nous définissons alors la \emph{valence (totale) moyenne de $G$} comme étant la moyenne des valences de ses sommets, $V(G) := (1/S) \sum_{v \in G} V(G)_v = 2A/S$.
\end{defn}

\begin{defn}
Étant donné un graphe généralisé $G$, un \emph{sous-graphe réduit de $G$} est un sous-graphe $G_r$ de $G$ qui a les mêmes sommets que $G$, qui n'a aucun lacet et qui a exactement une arête (issue de $G$) entre n'importe quel deux points distincts $v, w \in G$ qui sont liés par au moins une arête dans $G$. Tous les sous-graphes réduits de $G$ sont des graphes ordinaires et ils ont tous le même nombre d'arêtes\,; le \emph{nombre d'arête réduit de $G$}, noté $A_r(G)$, est le nombre d'arêtes d'un sous-graphe réduit. La \emph{valence réduite en un sommet $v \in G$} est $V_r(G)_v := V(G_r)_v$\,; la \emph{valence réduite moyenne de $G$} est $V_r(G) := (\sum_{v \in G} V_r(G)_v)/S(G) = 2A_r(G)/S(G)$.
\end{defn}

Pour $g \ge 0$, définissons $V(g)$ comme étant le supremum de $V_r(G)$ parmi les $G$ généralisés plongés dans $\Sigma_g$ et ayant une MC. Définissons $V_c(g)$ similairement, mais en imposant que $G \subset \Sigma_g$ soit plongé cellulairement. Il est clair que $V_c(g) \le V(g)$, tandis que le théorème fondamental implique $V(g) = \mathrm{max}_{0 \le h \le g} \, V_c(h)$. Nous montrerons (voir \cref{Prop-Vc-realise}) que $V_c(g)$ est toujours réalisé pour $g \ge 1$, c'est-à-dire qu'il existe un graphe polarisé $(G,P)$ \emph{$g$-optimal}, à savoir un graphe $(G,P)$ qui a une MC, qui satisfaisait $\gamma = g$ et $V(G) = V_c(g)$.

Notre premier théorème présente des bornes sur $V_c(g)$ pour tout $g \ge 0$. Ci-dessous, tandis que $\lceil x \rceil_0$ et $\lceil x \rceil_1$ désignent respectivement les plus petits entiers pair et impair plus grands ou égaux à $x$ et que $\lfloor x \rfloor_0$ et $\lfloor x \rfloor_1$ désignent respectivement les plus petits entiers pair et impair plus petits ou égaux à $x$.

    \begin{thm}\label{Thm_bornes}
Pour $g=0$, $V_c(0) = 3$ et n'est réalisé par aucun graphe. Pour $g \ge 1$, $V_c(g)$ est réalisé et satisfait les inégalités
\[  4 \, \left(1 + \dfrac{1}{3g}\right)^{-1} \le V_c(g) \le b_r(g)      \le b(g) := 1 + \sqrt{1+6g} \, ,\]
où \[ b_r(g) := \mathrm{max} \, \left\{  \, \lfloor{S_0(g)} \rfloor_0 - 1 \, , \, \lfloor{S_1(g)} \rfloor_1 - 1\, , \, 3 + \dfrac{6g-4}{\lceil S_0(g) \rceil_0} \, , \,  3 + \dfrac{6g-3}{\lceil S_1(g) \rceil_1} \right\}  \]
et $S_j(g) := 2 + \sqrt{6g + j}$ pour $j = 0, \, 1$.
      \end{thm}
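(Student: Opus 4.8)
The plan is to bound $V_r(G)=2A_r/S$ for an arbitrary generalized cellular $G\subset\Sigma_g$ carrying an MC, and then to optimize the bound over the number of vertices. Since $V_r$ sees neither loops nor parallel edges, the first step is a reduction to ordinary graphs: using the operations of \cref{sec-operations} to delete superfluous loops and parallel edges while preserving the existence of an MC and without raising the genus, it suffices to establish $V_r\le b_r(g)$ for ordinary $G$ (and to note that $b_r$ is nondecreasing in $g$). For an ordinary $G$ two competing inequalities appear. The \emph{dense} one is simply $A_r\le\binom{S}{2}$, i.e. $V_r\le S-1$. The \emph{sparse} one exploits the MC: the face traced by the MC runs over each of the $A$ edges at least once, so it has length $\ge A$ and the remaining faces have total length $\le 2A-A=A$; in an ordinary graph with $g\ge1$ each of those faces has length $\ge3$, whence $F-1\le A/3$. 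Substituting $F\le 1+A/3$ into Euler's relation $S-A+F=2-2g$ gives $2A\le 3S+6g-3$.

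Next I would extract the parity refinement and optimize. Since $2A$ is even, the estimate $2A\le 3S+6g-3$ sharpens to $2A\le 3S+6g-4$ precisely when $3S+6g-3$ is odd, i.e. when $S$ is even; thus $V_r\le 3+(6g-3)/S$ for odd $S$ and $V_r\le 3+(6g-4)/S$ for even $S$. For each parity one then maximizes $\min\{\,S-1,\ 3+(6g-4)/S\,\}$ over even integers $S$, respectively $\min\{\,S-1,\ 3+(6g-3)/S\,\}$ over odd integers $S$. In each case the dense branch increases and the sparse branch decreases, and they meet where $(S-2)^2=6g$ (even $S$), respectively $(S-2)^2=6g+1$ (odd $S$), that is at $S=S_0(g)$ and $S=S_1(g)$; hence the parity optimum is attained at the admissible integer just below $S_j$ on the dense side or just above it on the sparse side. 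This yields the four quantities $\lfloor S_0\rfloor_0-1$, $\lfloor S_1\rfloor_1-1$, $3+(6g-4)/\lceil S_0\rceil_0$ and $3+(6g-3)/\lceil S_1\rceil_1$, whose maximum is $b_r(g)$. Finally $b_r(g)\le b(g)$, because each term is at most its continuous counterpart and the continuous max--min equals the odd crossover value $S_1-1=1+\sqrt{6g+1}=b(g)$ (the even crossover $S_0-1=1+\sqrt{6g}$ being smaller, and both sparse expressions decreasing beyond $S_j$).

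For the lower bound I would exhibit an explicit family. Start from a genus-$1$ base with $(S,A_r)=(4,6)$ — a cellular embedding of $K_4$ on the torus carrying an MC, so that $V_r=3$ — and iterate the vertex connect-sum of \cref{sec-operations}, which adds $1$ to $\gamma$, $3$ to $S$ and $6$ to $A_r$ while splicing the two MCs through the glued vertex into a single MC. This produces graphs with $(S,A_r,\gamma)=(3g+1,6g,g)$, hence $V_r=12g/(3g+1)=4(1+\tfrac1{3g})^{-1}\le V_c(g)$. The case $g=0$ is handled by the same two ideas: the sparse inequality becomes $2A_r\le 3S-3$, so $V_r<3$ for every finite graph, while the analogous ``one mega-face plus triangles'' construction on the sphere realizes $V_r=3-3/S\to 3$; therefore $V_c(0)=3$ and is unattained. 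Realization of $V_c(g)$ for $g\ge1$ I would simply quote from \cref{Prop-Vc-realise}.

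The delicate points, and the real content beyond the easy bound $V\le b(g)$, are twofold. First, the minimal-face-length-$3$ step must be justified for all non-MC faces; this is what forces the reduction to ordinary graphs and requires dispatching the low-degree and low-edge corner cases in which a face could a priori be a monogon or a bigon. Second, and principally, the parity bookkeeping: one must verify that the clean bound $2A\le 3S+6g-3$, together with its even-$S$ improvement, produces \emph{exactly} the four rounded terms of $b_r(g)$ — matching the constants $6g-3$ versus $6g-4$ and the floors and ceilings $\lfloor\cdot\rfloor_{0,1},\ \lceil\cdot\rceil_{0,1}$ — and that the integer max--min is genuinely the stated maximum of four quantities while remaining $\le b(g)$.
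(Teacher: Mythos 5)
Your counting core (Euler's formula combined with ``every non-MC face has length at least $3$'', the parity refinement $2A\le 3S+6g-4+\pi(S)$, and the integer max--min over $S$ split by parity) is exactly the paper's route, namely the inequalities ($\spadesuit$), ($\heartsuit$), ($\diamondsuit$) of \cref{sec-ineg_struc} and the optimization in \cref{Prop-br(g)}. The genuine gap is your very first step: the reduction of an arbitrary generalized cellular graph with MC to an \emph{ordinary} one. None of the operations of \cref{sec-operations} deletes a loop or a parallel edge, and the blanket claim that such deletions can be performed ``while preserving the existence of an MC'' is false. Concretely, take on the torus the graph $H$ with vertices $p_1,p_2$, meridian loops $m_1$ (at $p_1$) and $m_2$ (at $p_2$), and a latitude split into two parallel edges $l_1,l_2$: its two faces have boundary words $l_1m_2l_1^{-1}m_1^{-1}$ and $l_2m_1l_2^{-1}m_2^{-1}$, each omitting one of $l_1,l_2$, so $H$ has no MC. Now add a handle joining the two faces and a loop $c$ at $p_1$ running through it: the result is a cellular graph $G\subset\Sigma_2$ with a single face of length $10$, hence with an MC (and $G$ even satisfies condition (C)). Deleting the non-contractible loop $c$ gives back $H$, which has no MC at all. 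So deleting a loop traversed twice by the MC can split the MC into two incomplete walks, and your proposal offers no argument that a suitable deletion order always exists. This is precisely the difficulty the paper's \cref{lem-reduction} is built to avoid: it deletes \emph{only} edges carrying left walks of length $1$ or $2$ (monogon and bigon faces), for which survival of the MC is checked by hand, and then runs the counting on generalized graphs satisfying condition (C), using the reduced valence through ($\clubsuit_r$) and ($\diamondsuit_r$). Note also that this reduction keeps the genus fixed, so the paper never needs the monotonicity of $b_r$ in $g$ that your lowered-genus reduction would require (that monotonicity is true, but you assert it without proof). Replacing ``ordinary'' by ``condition (C)'' throughout your argument both closes the gap and turns your proof into the paper's.

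Two secondary points. First, your lower bound is a legitimate alternative to the paper's: the paper (\cref{prop-existence}) blows up the standard optimal monograph into a star of $3g$ triangles, while you iterate the connected sum of a $K_4$ embedded on the torus with MC; both give $(S,A,\gamma)=(3g+1,6g,g)$ and hence $V_r=12g/(3g+1)$, and the connected-sum bookkeeping you invoke is exactly the paper's identity ($\#$). However, you only \emph{assert} the base case; a rotation system on $K_4$ with one triangular face and one face of length $9$ covering all six edges does exist, but it must be exhibited (the paper's genus-$1$ optimal example is a different graph, so nothing in the paper hands it to you). Second, the attainment of $V_c(g)$ for $g\ge 1$ is part of the statement being proved, and \cref{Prop-Vc-realise} is itself one of the propositions constituting the paper's proof of this theorem, so ``quoting'' it is circular here; the finiteness argument should be supplied (given $3<V_*<V_c(g)$, ($\diamondsuit_r$) bounds $S$, then $V_r\le b(g)$ and ($\spadesuit_r$) bound $A_r$, so only finitely many pairs $(S,A_r)$, hence finitely many values of $V_r$, exceed $V_*$).
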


Nous avons calculé les valeurs de $V_c(g)$ pour $0 \le g \le 5$\,:

\begin{lem}\label{Lem_optimal} Nous avons $V_c(1) = 3 \frac{3}{7}$, $V_c(2) = b_r(2) = 4 \frac{1}{3}$, $V_c(3) = b_r(3) = 5 \frac{1}{7}$, $V_c(4) = b(4) = 6$ et $V_c(5) = b_r(5) = 6 \frac{1}{4}$. Toutes ces valeurs sont réalisées par des graphes ordinaires.
      \end{lem}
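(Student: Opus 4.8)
Le plan est de démontrer séparément, pour chaque $g \in \{1,2,3,4,5\}$, la majoration de $V_c(g)$ puis sa réalisation par un graphe ordinaire. Pour $g \in \{2,3,4,5\}$ la majoration découle immédiatement de la \cref{Thm_bornes}, qui fournit $V_c(g) \le b_r(g)$\,: il ne reste qu'à évaluer $b_r(g)$. Par exemple, pour $g = 5$ on a $S_0(5) = 2 + \sqrt{30}$ et $S_1(5) = 2 + \sqrt{31}$, d'où $\lceil S_0(5) \rceil_0 = 8$ et $\lceil S_1(5) \rceil_1 = 9$, et le troisième terme $3 + (6 \cdot 5 - 4)/8 = 25/4$ domine les trois autres, donc $b_r(5) = 6\frac14$\,; les cas $g = 2, 3$ sont analogues. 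Pour $g = 4$ on remarque que $\sqrt{1 + 24} = 5$ est entier, de sorte que $S_1(4) = 7$ est atteint et $b_r(4) = 3 + 21/7 = 6 = b(4)$, la borne raffinée coïncidant alors avec $b(g)$.

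Le cas $g = 1$ exige un argument plus fin, car $b_r(1) = 3\frac35$ dépasse strictement la valeur annoncée $3\frac37$. Sur le tore $\chi = 0$ donne $A = S + F$, donc $V = 2 + 2F/S$. J'observe d'abord que, $G$ étant ordinaire et muni d'une MC, toute face distincte de la MC est de degré $\ge 3$\,: une face de degré $1$ exigerait un lacet, et une face de degré $2$ serait soit un bigone (exclu par l'ordinarité), soit formée des deux orientations d'une même arête $e$, auquel cas la MC éviterait $e$, contredisant la complétude. De $2A = \sum_i d_i \ge d_0 + 3(F-1) \ge A + 3(F-1)$ on tire $A \ge 3F-3$, soit $F \le (S+3)/2$ et donc $V \le 3 + 3/S$. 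Pour $S \ge 7$ ceci donne $V \le 24/7$\,; les cas $S \le 4$ et $S = 6$ se règlent directement (majorations respectives par $3$ et $10/3$, toutes deux $< 24/7$). Le seul cas délicat est $S = 5$, $F = 4$, qui force $A = 9$ (donc $G = K_5$ privé d'une arête) et, les trois autres faces étant triangulaires et totalisant $9 = 2A - d_0$, force $d_0 = A = 9$\,: la MC parcourrait alors chaque arête exactement une fois, c'est-à-dire serait un circuit eulérien, impossible puisque $K_5$ privé d'une arête possède deux sommets de degré impair. Ainsi $F \le 3$ et $V \le 16/5 < 24/7$, ce qui établit $V_c(1) \le 24/7$.

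Pour la réalisation, je détermine d'abord le triplet $(S,A,F)$ optimal à partir de la valeur de $V_c(g)$ et de $\chi = 2-2g$\,: on obtient $(7,12,5)$, $(6,13,5)$, $(7,18,7)$, $(7,21,8)$ et $(8,25,9)$ pour $g = 1,\dots,5$. L'analyse du degré $d_0$ de la MC, menée comme ci-dessus, impose $d_0 = A$ pour $g \in \{1,3,4\}$ (les MC sont alors des circuits eulériens) et $d_0 = A+1$ pour $g \in \{2,5\}$, les faces restantes étant dans tous les cas triangulaires. Le cas $d_0 = A$ requiert un graphe eulérien\,: je propose un graphe eulérien simple de suite de degrés $(4,4,4,4,4,2,2)$ sur $7$ sommets pour $g=1$, le graphe $K_7$ privé d'un triangle (degrés $(6,6,6,6,4,4,4)$) pour $g=3$, et $K_7$ lui-même pour $g=4$, ce dernier n'étant autre que le premier cas de la famille de saturation du Théorème principal. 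Le cas $d_0 = A+1$ requiert exactement deux sommets de degré impair, joints par l'arête doublée dans la MC\,: je propose $K_6$ privé de deux arêtes (suite $(5,5,4,4,4,4)$) pour $g=2$ et $K_8$ privé d'un couplage à trois arêtes (suite $(7,7,6,6,6,6,6,6)$) pour $g=5$.

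Il restera à munir chacun de ces graphes d'une polarisation explicite et à vérifier, en traçant les orbites de $\tau$, qu'une unique marche à gauche est complète, que les autres faces sont bien triangulaires et que le genre polarisé vaut exactement $g$. C'est là, à mon sens, le cœur de la difficulté\,: l'existence d'une suite de degrés de la bonne parité ne garantit pas a priori qu'on puisse l'enrouler en une surface de genre prescrit avec une face complète, et il faut produire les ordres cycliques convenables. J'envisage de les construire systématiquement à l'aide des opérations de la \cref{sec-operations} appliquées à des briques de base (typiquement des monographes homotopiques optimaux), selon la méthode illustrée à la \cref{sec-generalise}, ce qui ramène la vérification à un contrôle local du recollement plutôt qu'à un traçage ad hoc\,; la réalisation de $V_c(4) = 6$ par $K_7$ pourra quant à elle être empruntée telle quelle à la preuve d'optimalité du Théorème principal.
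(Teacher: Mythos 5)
Le point essentiel du lemme est la \emph{réalisation} des cinq valeurs, et c'est précisément ce que votre proposition ne démontre pas. Vous identifiez correctement les triplets $(S,A,F)$, la dichotomie $d_0 = A$ (la MC est alors un cycle eulérien, pour $g \in \{1,3,4\}$) ou $d_0 = A+1$ (pour $g \in \{2,5\}$), ainsi que des graphes candidats qui coïncident avec ceux du papier ($K_6$ moins deux arêtes pour $g=2$, $K_7$ moins un triangle pour $g=3$, $K_7$ pour $g=4$, $K_8$ moins un couplage à trois arêtes pour $g=5$)\,; mais vous reportez explicitement la construction des polarisations -- le c\oe ur de la difficulté, selon vos propres termes -- à une étape ultérieure. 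Or l'existence d'un graphe abstrait ayant la bonne suite de degrés n'entraîne nullement celle d'un plongement cellulaire de genre exactement $g$ dont une face est une marche complète et dont les autres faces sont des triangles\,: c'est cette construction qui occupe l'essentiel de la preuve du papier, laquelle exhibe pour $g=1$ et $g=2$ des suites explicites d'éclatements, de dédoublements et d'ajouts d'arêtes parallèles appliquées aux monographes optimaux standards, pour $g=3$ et $g=4$ des présentations de $\Sigma_g$ comme quotients de polygones décorés de diagonales, et pour $g=5$ une polarisation explicite du graphe à $8$ sommets, la MC étant donnée comme une suite de $26$ sommets, les huit MNC triangulaires étant listées, et la complétude se vérifiant par le compte des $50$ arêtes orientées. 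Seul le cas $g=4$ peut être légitimement délégué, comme vous le proposez, au \cref{thm-optimalite}(b), dont la preuve est indépendante du présent lemme et couvre $S=7$\,; pour $g \in \{1,2,3,5\}$, votre plan fondé sur les opérations de la \cref{sec-operations} reste une esquisse, et pour $g=5$ le papier lui-même ne procède d'ailleurs pas ainsi mais donne la polarisation directement et la vérifie à la main.

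Par ailleurs, votre majoration $V_c(1) \le 24/7$ comporte une lacune\,: $V_c(1)$ est par définition le supremum de la valence \emph{réduite} $V_r$ sur les graphes \emph{généralisés} plongés cellulairement avec MC, alors que votre argument (faces de degré $\ge 3$, exclusion du cas $S=5$, $F=4$ par la parité d'Euler--Hierholzer) suppose le graphe ordinaire dès le départ. Il faut d'abord invoquer le \cref{lem-reduction} pour se ramener à un graphe satisfaisant la condition (C) -- laquelle remplace l'ordinarité dans l'argument sur les degrés des faces -- puis traiter le cas $S=5$, $A=9$ non ordinaire\,: un lacet ou une arête double y force $A_r \le 8$, donc $V_r \le 16/5 < 24/7$, comme le fait le papier. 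Une fois ce point réparé, votre argument pour $g=1$ est correct et constitue une alternative plus courte à celui du papier, qui exclut $S=5$ en analysant les recollements de trois triangles plutôt que par la parité eulérienne\,; vos évaluations de $b_r(g)$ pour $g = 2, \dots, 5$ sont quant à elles exactes et identiques à celles du papier.
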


Disons qu'un genre $g$ est \emph{maximisé} si $V_c(g) = b_r(g)$ et \emph{super-maximisé} si $V_c(g) = b(g)$. Le lemme précédent suggère qu'il est ardu en général d'établir si un genre $g$ est maximisé, super-maximisé ou pas. Notre prochain théorème montre l'optimalité de nos bornes supérieures\,:

    \begin{thm}\label{Thm_optimal}\label{thm-optimalite}\textcolor{white}{ }
\begin{enumerate}[(a)]
\item $V_c(g) < b_r(g)$ pour une infinité de $g$, notamment tous les $g = 6k^2$ où $k \in \mathbb{Z}_{>0}$.

\item $V_c(g) = b(g)$ pour une infinité de $g$, notamment tous les $g = (S-1)(S-3)/6$ où $S \equiv 7 \, \mathrm{mod} \, 12$ est un nombre premier, ainsi que pour $S = 9$.
\end{enumerate}
      \end{thm}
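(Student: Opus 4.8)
\noindent Les deux énoncés sondent l'optimalité de la chaîne $V_c(g) \le b_r(g) \le b(g)$ de \cref{Thm_bornes}. Rappelons d'abord d'où provient $b(g)$\,: pour un graphe ordinaire muni d'une MC, la marche complète forme une face de longueur $\ell_0 \ge A$ et les autres faces ont longueur $\ell_i \ge 3$, d'où $2A = \sum_i \ell_i \ge A + 3(F-1)$ et, via la formule d'Euler, $V = 2A/S \le 3 + (6g-3)/S$\,; conjuguée à $V \le S-1$, cette borne atteint son optimum continu en $S = 2 + \sqrt{6g+1}$, où les deux contraintes coïncident en $V = b(g)$. L'énoncé (b) sature cet optimum en réalisant un graphe complet, tandis que (a) exhibe une obstruction de parité qui interdit cette saturation (et même celle de $b_r$) lorsque $g = 6k^2$.

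\emph{Preuve de (b).} Pour $g = (S-1)(S-3)/6$ on a $6g+1 = (S-2)^2$, donc $b(g) = S-1$, qui est exactement la valence réduite du graphe complet $K_S$. Comme $V_c(g) \le b(g)$, il suffira de construire un plongement cellulaire de $K_S$ dans $\Sigma_g$ possédant une MC. Je vise la structure de faces suivante\,: une unique grande face égale à un circuit eulérien de $K_S$ — qui existe car $S$ est impair, de sorte que $K_S$ est de degrés pairs — parcourant chaque arête une seule fois, et $S(S-1)/6$ faces triangulaires formant un système de triples de Steiner $STS(S)$. La formule d'Euler appliquée à cette configuration redonne précisément $\gamma = (S-1)(S-3)/6 = g$. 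Pour fabriquer la polarisation, j'observe qu'en chaque sommet $v$ les $(S-1)/2$ triples passant par $v$ induisent un couplage parfait des arêtes incidentes à $v$\,; on ordonne alors cycliquement ces arêtes en rendant adjacentes les arêtes couplées, ce qui fait de chaque triple une face triangulaire. La difficulté principale, et le point crucial de toute la démonstration, sera de choisir ces ordres locaux de façon que les faces complémentaires se recollent en une \emph{seule} face — le circuit eulérien — plutôt qu'en plusieurs\,: c'est ici qu'interviennent les systèmes de Steiner \emph{munis de la symétrie cyclique requise} produits par Skolem et O'Keefe \cite{S2, O} pour $S \equiv 7 \bmod 12$ premier, dont l'invariance sous une action de groupe permettra de suivre la permutation des faces et de vérifier que les arêtes restantes forment une unique orbite. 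Le cas $S = 9$ (qui n'est ni premier ni congru à $7$ modulo $12$) se traitera par une construction explicite analogue à partir de l'unique $STS(9)$, le plan affine $AG(2,3)$, avec $g = 8$ et $b(8) = 8$.

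\emph{Preuve de (a).} Pour $g = 6k^2$ on a $\sqrt{6g} = 6k$, de sorte que $S_0(g) = 6k+2$ est un entier pair\,; le calcul des quatre termes montre $b_r(6k^2) = 6k+1$. Par \cref{Prop-Vc-realise}, un graphe $g$-optimal peut être choisi ordinaire (son $V$ égale alors son $V_r$)\,; supposons par l'absurde qu'il réalise $V_c(6k^2) = 6k+1$. Alors $V = 2A/S = 6k+1$ entraîne, via $A \le \binom{S}{2}$ et la borne $V \le 3 + (6g-3)/S$ ci-dessus, les égalités $S = 6k+2$ et $A = \binom{6k+2}{2}$, c'est-à-dire $G = K_{6k+2}$. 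Or les sommets de $K_{6k+2}$ sont de degré impair $6k+1$\,: dans toute face fermée $F_0$, en notant $M$ l'ensemble des arêtes que $F_0$ parcourt deux fois (une fois dans chaque sens), l'équilibre entrée-sortie en chaque sommet impose que le nombre d'arêtes de $M$ incidentes à $v$ ait la parité de $\deg(v)$, donc soit impair\,; ainsi $M$ rencontre tout sommet et $|M| \ge S/2 = 3k+1$. La MC a donc longueur $\ell_0 = A + |M| \ge A + 3k+1$, et avec $\ell_i \ge 3$ pour les autres faces, la formule d'Euler force $\gamma \ge 6k^2 + k/2 > 6k^2$. Ceci contredit l'existence d'un plongement cellulaire dans $\Sigma_{6k^2}$\,; donc $V_c(6k^2) < 6k+1 = b_r(6k^2)$, ce qui fournit l'infinité de genres non maximisés annoncée. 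Cette partie est nettement plus élémentaire que (b), le seul point à soigner étant la réduction au cas ordinaire (fournie par \cref{Prop-Vc-realise}) qui évite de devoir traiter des arêtes multiples susceptibles de restaurer la parité manquante.
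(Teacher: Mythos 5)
Votre partie (b) contient une lacune véritable\,: ce n'est pas une démonstration, mais un programme. Vous identifiez correctement la cible — une polarisation de $K_S$ dont la MC est un cycle eulérien et dont les MNC, toutes de longueur $3$, forment un système de triples de Steiner, ce qui est exactement l'analyse que fait le texte (\cref{prop-graphecomplet}) — mais l'étape que vous désignez vous-même comme «\,le point crucial de toute la démonstration\,», à savoir choisir les ordres cycliques locaux pour que le complémentaire des triangles se recolle en une \emph{seule} face eulérienne, est laissée entièrement au futur («\,sera de choisir\,», «\,permettra de suivre\,», «\,se traitera\,»). Or tout le contenu du théorème est là\,: un système de Steiner arbitraire donne des faces triangulaires mais, en général, plusieurs faces complémentaires, et un cycle eulérien arbitraire donne une grande face mais pas de triangles. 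La synthèse occupe l'essentiel de la \cref{sec-thm-optimalite} du texte\,: construction d'un polygone à $4f$ côtés muni d'une valuation explicite fondée sur les suites de Skolem et O'Keefe (\cref{lem-Skolem}), la primalité de $S$ servant à garantir que les sommes $\alpha = \sum_j \alpha_j$ et $\beta = \sum_j \beta_j$ sont copremières avec $S$ (\cref{lem-Skolem_copremier}) — sans quoi la face supérieure du polygone n'énumérerait pas tous les sommets de $K_S$ — et l'hypothèse $S \equiv 7 \bmod 12$ (c'est-à-dire $k$ impair) étant utilisée dans l'argument de chaînes d'identification des coins (Étape 4, cas $v \neq 0$). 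Votre esquisse n'indique ni où ces deux hypothèses interviendraient, ni comment «\,l'invariance sous une action de groupe\,» produirait une unique orbite complémentaire, et la construction explicite annoncée pour $S = 9$ n'est pas donnée. En l'état, la partie (b) n'est pas démontrée.

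Votre partie (a), en revanche, est correcte dans son c\oe ur et emprunte une route différente, plus directe, que celle du texte\,: là où le texte dénombre exactement les MNC et scinde en deux cas (une MNC de longueur $4$, exclue par Euler--Hierholzer, ou une unique arête doublée, exclue par la parité de la partition en triangles), vous considérez l'ensemble $M$ des arêtes que la MC parcourt dans les deux sens et montrez que tout sommet de degré impair est incident à un nombre impair d'arêtes de $M$, d'où $|M| \ge S/2$, puis $\gamma \ge 6k^2 + k/2 > g$ par la formule d'Euler\,; ce calcul est juste et évite la disjonction de cas. Deux réserves cependant. D'abord, votre énoncé de parité tel que formulé («\,dans toute face fermée $F_0$\,») est faux en général\,: il n'est valable que pour la MC, précisément parce que la complétude garantit qu'aucune arête n'est évitée par la marche (pour une face quelconque, ce sont les arêtes non parcourues qui détruisent la parité). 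Ensuite et surtout, la réduction au cas ordinaire n'est pas fournie par la \cref{Prop-Vc-realise}\,: celle-ci affirme seulement que le supremum $V_c(g)$ est atteint par un graphe \emph{généralisé}. Pour obtenir l'ordinarité, il faut invoquer le \cref{lem-reduction} (réduction à la condition (C)) puis les égalités extrémales\,: $(\clubsuit_r)$ donne $S \ge 6k+2$, $(\diamondsuit_r)$ donne $S \le 6k+2$, et $(\diamondsuit)$ force alors $V(G) = V_r(G)$, donc l'absence de boucles et d'arêtes multiples, d'où $G = K_{6k+2}$. C'est ce que fait le texte\,; avec cette correction, votre preuve de (a) devient complète.
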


La première partie de ce résultat repose sur le fait que pour les $g$ cités, la borne $b_r(g)$ est égale à $\lfloor S_0(g) \rfloor_0 - 1$ et à $3 + \dfrac{6g-4}{\lceil S_0(g) \rceil_0}$. Or, les graphes (réduits) qui réalisent la première borne sont des graphes complets sur un nombre pair de sommets, tandis les graphes (réduits) qui réalisent la deuxième borne possèdent plusieurs sommets de valence paire. Puisque ces deux propriétés sont incompatibles, aucun graphe ne réalise $b_r(g)$ si $g = 6k^2$.

La deuxième partie de ce résultat émane de l'observation simple selon laquelle si $g$ est super-maximisé, alors tout graphe $g$-optimal $G$ est isomorphe à un graphe complet $K_S$ sur $S = S(g) = 2 + \sqrt{1+6g }$ sommets, où forcément $S \equiv 1 \mbox{ ou } 3 \, \mathrm{mod} \, 6$. Plus encore, la MC sur $G$ est alors un cycle eulérien, tandis que les MNC déterminent un système de triplets de Steiner sur $\mathcal{S}(G)$, c'est-à-dire qu'elles partitionnent l'ensemble $\mathcal{A}(G)$ en triplets disjoints. La démonstration du théorème consiste donc à construire des polarisations judicieuses pour les graphes complets $K_{S(g)}$ à partir de systèmes de triples de Steiner convenables, tâche que nous accomplissons pour les $g$ mentionnés. Il semble toutefois plausible que la Conjecture suivante tienne\,:

\begin{conj}\label{conj-optimalite}
$V_c(g) = b(g)$ si $g = (S-1)(S-3)/6 \ge 4$ où $S  \equiv 1 \mbox{ ou } 3 \, \mathrm{mod} \, 6$.
\end{conj}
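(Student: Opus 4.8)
Le point de départ est la réduction dégagée juste avant l'énoncé. Si le genre $g = (S-1)(S-3)/6$ est super-maximisé, tout graphe $g$-optimal est isomorphe au graphe complet $K_S$ sur $S = 2 + \sqrt{1+6g}$ sommets, sa MC est un cycle eulérien et ses MNC forment un système de triplets de Steiner sur $\mathcal{S}(K_S)$. Réciproquement, pour obtenir $V_c(g) = b(g)$, il suffit donc de munir $K_S$ d'une polarisation dont les marches à gauche se répartissent en $S(S-1)/6$ triangles formant un tel système et en une \emph{unique} marche restante, nécessairement complète, donc eulérienne. Comme les systèmes de triplets de Steiner sur $S$ points existent pour tout $S \equiv 1 \mbox{ ou } 3 \mbox{ mod } 6$ (Kirkman), la conjecture devient une question de réalisation topologique\,: partir d'un système convenable et le réaliser comme l'ensemble des faces triangulaires d'une polarisation de $K_S$ dont le complément reste connexe.

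D'abord j'analyserais la polarisation sommet par sommet. En un sommet $v$ de valence $S-1$, les $(S-1)/2$ triplets passant par $v$ partitionnent les arêtes incidentes en paires, et réaliser un triplet $\{v,x,y\}$ comme face triangulaire équivaut à rendre les deux arêtes de sa paire cycliquement consécutives en $v$, l'ordre interne de ce bloc étant dicté par une orientation globale fixée du triplet. Pour toute orientation globale des triplets et tout ordre cyclique des blocs en chaque sommet, les $S(S-1)/6$ triplets deviennent alors des faces triangulaires\,; les transitions «\,entre blocs\,» tracent les faces restantes, qui empruntent chaque arête non orientée exactement une fois (dans le sens opposé à celui de son triangle). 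L'existence d'une MC équivaut donc à ce que ces transitions se regroupent en une \emph{seule} marche --- nécessairement eulérienne ---, car si le complément se scinde en plusieurs marches, aucune ne parcourt toutes les arêtes. Tout le problème est ainsi de choisir orientations et ordres des blocs pour que la permutation «\,entre blocs\,» soit un unique cycle.

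Le plan est alors d'exploiter des systèmes de triplets dotés de structure algébrique pour forcer cette connexité, puis de couvrir toutes les classes résiduelles admissibles. Je commencerais par les systèmes cycliques issus de familles de différences (Skolem pour $S \equiv 1 \mbox{ mod } 6$, Bose pour $S \equiv 3 \mbox{ mod } 6$)\,: l'action du groupe cyclique permet de choisir orientations et blocs de façon équivariante et de ramener la condition «\,face unique\,» au calcul d'une seule orbite, dans l'esprit des graphes de courant de Gross--Tucker employés pour les plongements triangulaires des graphes complets. Ceci étendrait la construction des auteurs pour $S \equiv 7 \mbox{ mod } 12$ premier \cite{S2, O} aux autres classes $S \equiv 1, 3, 9 \mbox{ mod } 12$. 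Pour passer de valeurs isolées de $S$ à toutes les valeurs admissibles, j'adapterais enfin les constructions récursives des systèmes de triplets (doublement $v \mapsto 2v+1$, triplement $v \mapsto 3v$, produits de Wilson) en une récurrence sur les paires $(K_S, P)$, en montrant que l'étape inductive fusionne en une seule les marches eulériennes des briques de base\,; c'est le rôle que joue l'opération de somme connexe dans la preuve de l'optimalité asymptotique, qu'il faudrait ici raffiner pour préserver la complétude du graphe.

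La difficulté principale, et la raison pour laquelle l'énoncé demeure conjectural, réside tout entière dans la condition de connexité du complément. Se donner un système de triplets et en faire des faces triangulaires est très souple --- l'ordre des blocs et l'orientation des triplets laissent beaucoup de liberté --- mais garantir que les arêtes restantes forment \emph{une} seule marche, et non plusieurs, est une contrainte globale de nature parité/connexité qui résiste à un traitement uniforme. Les cas déjà établis ($S \equiv 7 \mbox{ mod } 12$ premier via Skolem et O'Keefe, et $S = 9$) sont exactement ceux où l'arithmétique du système force la connexité du complément. L'obstacle à lever est d'exhiber, pour chaque $S \equiv 1 \mbox{ ou } 3 \mbox{ mod } 6$, soit un système appartenant à une famille uniformément contrôlable possédant cette propriété, soit un argument de recollement qui fusionne de manière prouvable les faces le long de la récurrence\,: c'est la maîtrise de la fusion des faces sous ces opérations combinatoires qui constitue le verrou technique.
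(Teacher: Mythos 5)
Premier point, essentiel\,: l'énoncé que vous traitez est une \emph{conjecture} du papier (\cref{conj-optimalite}), pour laquelle celui-ci ne propose aucune démonstration. Les auteurs n'établissent que les cas particuliers du \cref{thm-optimalite}(b), à savoir $S \equiv 7 \, \mathrm{mod} \, 12$ premier (via les systèmes de triples de Steiner issus de Skolem et O'Keefe \cite{S2, O}) et $S = 9$, et déclarent explicitement le cas général ouvert. Il n'y a donc pas de preuve du papier à laquelle comparer votre texte\,; et votre proposition n'en est pas une non plus, ce que vous reconnaissez d'ailleurs honnêtement. Cela dit, votre reformulation du problème est correcte et coïncide exactement avec le cadre du papier\,: l'équivalence entre «\,$V_c(g) = b(g)$\,» et «\,$K_S$ admet une polarisation dont les MNC forment un système de triples de Steiner et dont l'unique marche restante est un cycle eulérien\,» est le contenu de la \cref{prop-graphecomplet} et de la stratégie de synthèse de la section 6, et votre analyse locale par blocs en chaque sommet est cohérente avec la \cref{prop-sature}. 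Le verrou que vous identifiez (faire de la permutation «\,entre blocs\,» un unique cycle) est bien le bon.

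Deux maillons de votre plan ne sont toutefois pas des détails à régler\,: ce sont précisément les raisons pour lesquelles l'énoncé demeure conjectural. (1) Rien ne garantit que les familles structurées que vous invoquez (Bose pour $S \equiv 3 \, \mathrm{mod} \, 6$, graphes de courant, produits de Wilson) admettent des orientations et des ordres de blocs rendant le complément connexe. Dans les cas démontrés, cette connexité repose de manière essentielle sur des hypothèses arithmétiques très particulières\,: la primalité de $S$, qui fournit la coprimalité avec $S$ des sommes $\alpha$, $\beta$, $\gamma$ (\cref{lem-Skolem_copremier}), et l'imparité de $k$, utilisée dans la chaîne d'identifications du cas $v \neq 0$. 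Ces hypothèses n'ont pas d'analogue évident dans les autres classes de congruence, et c'est exactement ce qui sépare le \cref{thm-optimalite}(b) de la \cref{conj-optimalite}. (2) Votre récurrence par somme connexe ne peut pas fonctionner, même «\,raffinée\,»\,: la somme connexe de deux graphes complets n'est jamais un graphe complet, alors que la \cref{prop-graphecomplet} force tout graphe optimal en genre super-maximisé à être exactement $K_S$. C'est pourquoi le papier ne l'emploie que pour l'énoncé asymptotique (\cref{thm-asymptotique}), où l'on renonce à l'égalité $V_c(g) = b(g)$. Toute récurrence devrait donc opérer au niveau des systèmes de Steiner munis de polarisations compatibles et démontrer la fusion des faces complémentaires au fil de l'induction\,; cette construction n'existe ni dans votre texte ni dans le papier, et c'est elle qui constitue le problème ouvert.
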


Finalement, en raison du \cref{thm-optimalite}(b), de l'apparition fréquente des nombres premiers congruents à 7 modulo 12 \cite[Theorem 3]{BHP} et de l'opération de somme connexes de surfaces avec graphes plongés, nous montrons que la borne $b(g)$ est asymptotiquement optimale \,:
    \begin{thm}\label{Thm_asymptotique}\label{thm-asymptotique}
\[ V_c(g) = \sqrt{6g} + o(\sqrt{g}) \quad \mbox{ ($g \to + \infty$) }\, . \]
      \end{thm}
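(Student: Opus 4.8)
La borne supérieure est immédiate. D'après le \cref{Thm_bornes}, $V_c(g) \le b(g) = 1 + \sqrt{1+6g}$, et comme
\[ 1 + \sqrt{1+6g} = 1 + \sqrt{6g}\,\sqrt{1 + \tfrac{1}{6g}} = \sqrt{6g} + O(1) = \sqrt{6g} + o(\sqrt g), \]
on a déjà $V_c(g) \le \sqrt{6g} + o(\sqrt g)$. Tout l'enjeu est donc la borne inférieure $V_c(g) \ge \sqrt{6g} - o(\sqrt g)$.

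Le plan est de partir des genres \emph{super-maximisés} du \cref{thm-optimalite}(b). À chacun d'eux, $g_0 = (p-1)(p-3)/6$ avec $p \equiv 7 \bmod 12$ premier, est associé un graphe $g_0$-optimal isomorphe au graphe complet $K_p$, plongé cellulairement dans $\Sigma_{g_0}$, muni d'une MC et de valence $V(K_p) = p-1 = b(g_0)$. Pour un genre $g$ grand quelconque, je choisirais le plus grand premier $p \equiv 7 \bmod 12$ tel que $g_0 := (p-1)(p-3)/6 \le g$, soit $p \le 2 + \sqrt{1+6g} =: S^*$. L'étape délicate est le contrôle de l'écart $g - g_0$ : c'est là qu'intervient \cite{BHP}. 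Ce résultat garantit la présence d'un premier $\equiv 7 \bmod 12$ dans tout intervalle court $[x - x^{\theta}, x]$ ($\theta < 1$) pour $x$ assez grand, donc un choix de $p$ avec $S^* - p = o(\sqrt g)$. Comme $6g = (S^*-1)(S^*-3)$ par définition de $S^*$, il vient
\[ 6(g - g_0) = (S^*-1)(S^*-3) - (p-1)(p-3) = O\big((S^*-p)\,S^*\big) = o(g). \]

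Il reste à combler le genre manquant $h := g - g_0 = o(g)$ sans dégrader la valence moyenne. J'invoquerais pour cela le \cref{Thm-monographe} : il existe un monographe homotopique $M$ à $3h$ lacets, de genre exactement $h$, muni d'une MC. L'opération de somme connexe de la \cref{sec-operations}, appliquée en identifiant l'unique sommet de $M$ à un sommet de $K_p$, produit un graphe $G_g := K_p \# M$ qui conserve une MC et dont le genre est additif, d'où $\gamma(G_g) = g_0 + h = g$. Ce graphe a $p$ sommets et $\binom{p}{2} + 3h$ arêtes, donc
\[ V(G_g) = \frac{2\binom{p}{2} + 6h}{p} = (p-1) + \frac{6h}{p} \ge p - 1 = \sqrt{6g} - o(\sqrt g), \]
la dernière égalité résultant de $p = S^* - o(\sqrt g)$ et de $S^* = \sqrt{6g} + O(1)$. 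On conclut $V_c(g) \ge V(G_g) \ge \sqrt{6g} - o(\sqrt g)$, ce qui, joint à la borne supérieure, donne l'énoncé.

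L'obstacle principal est l'estimation arithmétique $g - g_0 = o(g)$. En fait, seule compte la majoration des écarts entre premiers consécutifs $\equiv 7 \bmod 12$ par $o(p)$, propriété très largement impliquée par \cite{BHP} ; l'exposant $0{,}525$ n'est nullement nécessaire ici. Une fois cette estimation acquise, le caractère « petit » du monographe de remplissage (genre $o(g)$, un seul sommet, $O(h)$ arêtes fusionnées dans un sommet existant) assure que la valence moyenne du graphe recollé ne diffère de $p-1$ que d'un terme $o(\sqrt g)$.
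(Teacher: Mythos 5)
Votre démonstration est correcte et repose sur les trois mêmes piliers que celle de l'article -- les graphes complets $K_p$ des genres super-maximisés du \cref{thm-optimalite}(b), le théorème de Baker--Harman--Pintz pour contrôler l'écart $S^* - p$, et l'opération de somme connexe de la \cref{sec-operations} -- mais elle en diffère réellement dans l'étape de remplissage du genre manquant $h = g - g_0$, qui est le c\oe ur de la construction. L'article procède par récurrence\,: $G_g := G_{g'} \# G_{g-g'}$, où le graphe de remplissage est lui-même issu de la récursion\,; cela exige une double récurrence (les estimations $S(G_k) \le D\sqrt{k}$ et $V_r(G_k) \ge \sqrt{6k} - D' k^{9/20}$), un lemme d'écart entre éléments consécutifs de $\mathcal{G}$ (\cref{lem-gap}) et des cas de base fournis par la \cref{prop-existence}. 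Vous remplissez plutôt en une seule somme connexe avec le monographe optimal standard $M_h$ de genre $h$ (\cref{Prop-HomGenusOptimal} avec $S=1$)\,: comme $M_h$ n'a qu'un seul sommet, identifié à un sommet de $K_p$, le graphe $G_g = K_p \# M_h$ garde exactement $p$ sommets et sa valence réduite vaut exactement $p-1$ -- aucune dilution, aucune récurrence. Votre variante donne même un terme d'erreur nominalement meilleur\,: la seule perte provient de l'écart entre nombres premiers, soit $O(g^{3/10})$ avec l'exposant $3/5$ cité de \cite{BHP}, contre le $O(g^{9/20})$ de l'article, dont le terme dominant est précisément la dilution $S(G_h) = O(\sqrt{h})$ que votre choix de monographe annule.

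Une correction mineure s'impose toutefois\,: $V_c(g)$ est défini comme supremum de la valence \emph{réduite} $V_r$, et non de la valence totale $V$. L'inégalité finale $V_c(g) \ge V(G_g)$ n'est donc pas justifiée telle quelle\,: les $3h$ lacets de $M_h$ gonflent $V(G_g)$ du terme $6h/p$, mais ils ne comptent pas dans $V_r$ -- c'est exactement le phénomène de la \cref{fig-multilacets} que la valence réduite sert à neutraliser. Il faut conclure via $V_c(g) \ge V_r(G_g) = p-1$, qui est de toute façon la seule quantité que votre minoration utilise\,; l'argument demeure donc intact.
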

\noindent En fait, nous montrerons $V_c(g) = \sqrt{6g} + O(g^{9/20})$. Un raffinement de notre argumentation pourrait fournir un exposant quelque peu meilleur que $9/20$, mais la validité de la \cref{conj-optimalite} impliquerait un terme d'erreur encore meilleur.

\subsection{Quelques problèmes connexes}\label{app}
Nous terminons le survol de cet article en mentionnant quelques problèmes soulevés par notre travail et qui pourront faire l'objet d'investigations futures.

\subsubsection{Premier problème - Conjecture d'optimalité} Il s'agit de mieux comprendre l'ensemble des genres $g \ge 1$ pour lesquels $V_c(g) = b(g)$. La \cref{conj-optimalite} revient à annoncer que la condition nécessaire $g = (S-1)(S-3)/6$ où $S \, \equiv \, 1 \mbox{ ou } 3 \mbox{ mod }6$ avec $S \ge 7$ est en fait une condition suffisante. En raison du \cref{thm-optimalite}(b) et de quelques autres solutions disparates que nous n'avons pas incluses dans cet article, nous croyons que cette conjecture soit correcte.

  \medskip
\subsubsection{Deuxième problème - Relations aux systèmes dynamiques surfaciques}  Bien que nous ayons décidé de présenter notre théorie en termes combinatoires et seulement en dimension deux pour rendre l'article le plus compact possible, les motivations et les conséquences de nos résultats sont liées à deux situations dynamiques que nous présentons succinctement ici :

(1) Jeu de billard. Si $G$ est un graphe plongé dans une surface de genre quelconque, remplaçons chaque sommet par un domaine convexe et chaque arête par un col hyperbolique. Cela donne lieu à un domaine $D$ de la surface. Chaque lancement d'une boule de billard depuis l'un des domaines convexes lui fait parcourir un chemin qui ne peut pas s'engouffrer éternellement dans un col (par hyperbolicité). Un tel chemin donne ainsi lieu à une marche dans le graphe, lorsque projeté sur celui-ci. La théorie que nous présentons ici correspond alors au cas particulier quand la boule traverse chaque col qu'elle croise et ressort de chaque domaine par le col immédiatement à gauche du col par lequel elle est arrivée, ou revient sur elle-même quand il n'y a qu'un col. En ce sens, cet article aborde la partie combinatoire, alors que le jeu de billard classique sur une table convexe aborde la partie analytique. Bien entendu, un domaine régulier quelconque d'une surface n'est pas nécessairement l'épaississement d'un graphe en parties convexes et hyperboliques, mais c'est un premier cas intéressant qui n'est pas hors d'atteinte. 

(2) Dynamique hamiltonienne et théorie du contrôle. Soit $H$ un hamiltonien défini sur une surface orientée de genre quelconque. Il y définit une dynamique dont la nature qualitative a un grand intérêt. Supposons maintenant que $H$ soit générique au sens suivant : il possède un nombre fini de points critiques et chacun d'entre eux est une singularité dont la profondeur est finie. Les fonctions de Morse sont trivialement génériques en ce sens, mais ne sont pas les plus intéressantes car dans le cas général les singularités s'expriment en cartes locales par des polynômes génériques à deux variables de degré arbitraire. Prenant maintenant une valeur critique $c$ de $H$, la préimage de $c$ est, sous des hypothèses raisonnables, un graphe dont la valence en chaque sommet (identifié à un point critique de valeur critique $c$) est donnée par la profondeur du point critique. Dans ce cas, et comme le gradient symplectique de $H$ tourne toujours à gauche, la dynamique de l'hypersurface $H^{-1}(c)$ est approximée $C^0$ par la marche à gauche sur ce graphe $G_H$ ($C^0$ près des points critiques et $C^1$ partout ailleurs). En particulier, toute caractéristique qualitative de cette orbite est contenue dans la marche à gauche sur $G_H$. Comme il est facile de voir si cette marche est complète, et puisque que toute marche complète est topologiquement ergodique lorsque le graphe est plongé cellulairement, l'orbite de $H$ passant par ce point critique est topologiquement ergodique dans ce cas. Moralement, cette orbite visite toute la topologie de la surface. En théorie du contrôle, cela permet de lancer un vaisseau spatial (ici en dimension $2$) en utilisant cette orbite, de n'importe quel point de la surface vers n'importe quel autre point, en y adjoignant un nombre fini de petits hamiltoniens qui correspondent, dans notre exemple, à une contribution des moteurs du vaisseau, avec la plus petite énergie possible. Un autre exemple est la trajectoire d'un électron sur une surface métallique, soumis à un champ magnétique, dont les corrections de trajectoire sont produites par de faibles champs électriques \cite{N, MN}.

   \medskip 
    
\subsubsection{Troisième problème - Généralisation aux dimensions supérieures}   Il y a deux façons, également intéressantes, de généraliser la notion de graphe plongé dans une surface aux variétés symplectiques de dimension arbitraire. La première est de considérer un graphe dans une surface comme un squelette d'hypersurfaces se rencontrant en des sous-variétés coisotropes ou symplectiques. La seconde est de considérer un graphe dans une surface dont le squelette est fait de sous-variétés lagrangiennes sur lequel se rétracte la variété, ce qui arrive dans les variétés de Weinstein.
 \bigskip

\subsection{Relation à la littérature}
Le problème général de plongement de graphes dans des surfaces de genre arbitraire a donné lieu à une littérature abondante et est le sujet du chapitre 10 du livre de Hartsfield et Ringel \cite{HR} et de la monographie de Mohar et Thomassen \cite{MT}.

Le problème que nous abordons dans cet article est celui des marches complètes sur des graphes polarisés, et donc en particulier sur les graphes plongés dans une surface. Ce problème tire son origine de celui, fondamental, de la discrétisation des systèmes dynamiques hamiltoniens. Cet article est, nous l’espérons, une première étape importante vers la résolution de cette discrétisation. 

Or il se trouve que la question des marches (à gauche disons) complètes n’a pas, à ce jour, attiré l’attention des experts en théorie des graphes, peut-être parce que l’accent chez ces experts était davantage porté sur des questions, certes naturelles, qui ne font pas intervenir la dynamique. Il nous fallait donc construire cette nouvelle théorie, plus ou moins à partir de zéro.

C'est ainsi que notre attention s'est naturellement portée vers le problème de la valence moyenne maximale que peut posséder un graphe avec marche complète plongé cellulairement dans une surface donnée. Dans cette approche, la surface a préséance sur le graphe. Il s'agit d'une situation différente de celle que l'on trouve habituellement dans la littérature, où c'est plutôt un graphe qui est donné d'entrée de jeu et pour lequel on cherche une polarisation qui le plonge d'une manière particulière dans une surface (la présence d'une marche complète n'étant cependant jamais une condition imposée).

Par exemple, deux problèmes notables abordés dans la littérature sont ceux du genre minimal et du genre maximal d'une surface dans lequel un graphe peut se plonger cellulairement (nous renvoyons au livre de Mohar et Thomassen \cite{MT} pour des détails sur les résultats obtenus à leur propos). Ces deux problèmes sont généralement différents du nôtre, comme l'illustre le cas du graphe complet à 7 sommets $G = K_7$. En effet, nous avons établi qu'il s'agit du graphe de plus grande valence moyenne à se plonger dans la surface de genre 4 en ayant une marche complète (Propositions 5.10 et 6.1). De plus, $K_7$ n'admet aucun plongement cellulaire avec marche complète dans une surface de genre $g \le 3$, et il n'est pas le graphe de plus grande valence moyenne avec marche complète dans les surfaces de genre $g \ge 5$.   Or les genres minimaux et maximaux des graphes complets sont connus\,: selon Ringel et Young \cite{HR}[Theorem 10.3.6], le genre minimal de $K_7$ est $g = 1$ (un plongement explicite est donné par \cite{HR}[Figure 10.3.7]), tandis que son genre maximal est $g = 7$ en vertu du résultat de Nordhaus, Stewart et White \cite{NSW}.

Observons aussi qu'un graphe polarisé avec une marche complète qui n'a qu'une ou deux marches s'apparente à un graphe supérieurement plongé (traduction libre du terme anglais \emph{upper-embedded graph} introduit par Ringeisen \cite{R}), c'est-à-dire à un graphe dont les plongements cellulaires dans la surface de genre maximal associée ne possèdent qu'une ou deux faces. Évidemment, les graphes polarisés n'ayant qu'une seule marche (forcément complète) coïncident avec les graphes plongés à une seule face. Toutefois, un graphe supérieurement plongé avec deux faces peut n'avoir aucune marche complète : pensons au graphe formé (après une subdivision appropriée des arêtes) par l'union de deux méridiens et d'un cercle de latitude dans le tore. Plus généralement, la question se pose quant à savoir si un graphe polarisé avec marche complète est supérieurement plongeable, c'est-à-dire s'il admet une (autre) polarisation qui le plonge supérieurement dans une (autre) surface. La réponse à cette question est négative\,: les «\,bouquets\,» considérés dans la \cref{fig-multilacets}, ou leurs variantes ordinaires considérées dans \cite[Figure 1]{NSW}, ne peuvent être plongés cellulairement que dans la sphère et ne sont ainsi pas supérieurement plongeables.

Toutefois, la question prend une tournure intéressante lorsqu'elle est posée pour les graphes polarisés avec marche complète qui maximisent la valence moyenne en un genre donné : un tel graphe admet-il une (autre) polarisation qui le plonge supérieurement dans une (autre) surface ? Soulignons que les graphes optimaux explicites des sections 5 et 6 admettent tous, par inspection, un arbre couvrant dont le complément est connexe, de sorte qu'ils sont tous supérieurement plongeables par le théorème principal de cet article de Jungerman \cite{J}. Heuristiquement, le fait qu'un graphe ait une valence moyenne élevée ouvre la voie à ce qu'il ait aussi une arête-connexité élevée ; or, Jungermann (\emph{Loc. cit}) a montré que tout graphe 4-arête-connexe est supérieurement plongeable. Ainsi, il est plausible que cette nouvelle question admette une réponse positive.

\subsection{Structure de l'article}
 Voici la structure de cet article. Dans la \cref{sec-operations}, nous présentons plusieurs opérations sur les graphes avec des marches à gauche (complètes) qui modifient les graphes et les marches de diverses manières contrôlées. Ces opérations sont utiles pour construire divers graphes optimaux qui saturent les bornes indiquées dans nos résultats. Dans la \cref{sec-Sgraphes}, nous prouvons le \cref{Thm-Sgraphe}. La \cref{sec-ineg_struc} établit quelques inégalités fondamentales entre la valence (resp. valence réduite), le nombre de sommets et le genre d'un graphe ayant une marche à gauche complète et aucune marche à gauche ``courte''. La \cref{sec-generalise} contient une preuve du \cref{Thm_bornes} dans la \cref{subsec-thm-bornes} et la \cref{subsec-Lem_optimal} présente les exemples optimaux attestés par le \cref{Lem_optimal}. Dans la \cref{sec-thm-optimalite}, nous prouvons le \cref{thm-optimalite}, tandis que la \cref{sec-thm-asymptotique} contient une preuve du \cref{thm-asymptotique}. L'\cref{app_invariants} présente un calcul de l'invariant $C(g)$ pour le lecteur intéressé.

   \bigskip \medskip

   \noindent
   \textit{Remerciements.} Nous sommes reconnaissants à Steven Boyer de nous avoir communiqué, dès le début de ce travail, une preuve simple et lumineuse d'un résultat classique sur le nombre maximal de lacets plongées disjoints et homotopiquement différents dans surface de genre arbitraire, et à Fran\c{c}ois Bergeron pour une conversation sur la combinatoire algébrique. Nos remerciements vont à Jacob Fox et à Yakov Eliashberg pour des discussions fructueuses. Nous sommes reconnaissants à Thomas Parker de nous avoir suggéré d'étudier la topologie de 1-courants sur l'espace des graphes. Nous sommes reconnaissants à Eliane Cody d'avoir suggéré et étudié cette théorie pour des graphes infinis, en particulier pour le H-tree. Bien que cela n'entre pas dans le contexte de cet article, nous la remercions pour cette étude et sa contribution indirecte. Le second auteur remercie le Département de mathématique de l'Université Stanford pour son soutien lors de la présentation à l'automne 2022 d'une suite de conférences ``Distinguished Lecture Series" portant sur cet article. Nous remercions également le référé pour de nombreuses suggestions sur les relations éventuelles de ce travail avec certains modèles de surfaces aléatoires et sur un traitement plus combinatoire des marches à gauche. Bien que ces idées nous semblent intéressantes, il ne nous a pas semblé possible de les ajouter dans cet article dans un temps raisonnable.

%%%%%%%%%%%%%% Section Opérations
\section{Opérations sur les graphes polarisés}\label{sec-operations}

Nous décrivons plus en détails les opérations mentionnées dans l'introduction. Bien que ces opérations puissent être définies au niveau des graphes polarisés abstraits, il est utile d'imaginer qu'elles opèrent sur un graphe plongé dans une surface. Pour chacune d'elles, nous mentionnons les conditions pour que l'opération maintienne la présence d'une marche à gauche complète, pour qu'elle préserve la cellularité du plongement et pour qu'elle conserve l'ordinarité du graphe. Nous évoquons aussi l'impact de l'opération sur la valence moyenne du graphe.\\

   \noindent
   \textbf{Contraction (Blow-down)} : Si $G$ est un graphe et $G' \subset G$ un sous-graphe, cette opération consiste à contracter toutes arêtes de $G'$ et à contracter les sommets de $G'$ en un seul sommet. \smallskip

Si $G$ est plongé dans une surface, et si $G'$ est un arbre, la contraction de $G'$ ne change pas la topologie de la surface. Sinon, dans le cas général, la contraction $G/G'$ est un graphe sur une nouvelle surface singulière (à cusps) qui correspond exactement à la partie topologique du théorème de compacité de Gromov. Evidemment il suffit de répéter la contraction d'une seule arête, autant de fois qu'il le faut pour épuiser $G'$.

Si $G'$ est un arbre, le nouveau graphe possède encore une marche à gauche complète et le caractère cellulaire du plongement est préservé. Si la valence moyenne de départ est supérieure à $2$, alors la contraction de cet arbre augmente la valence moyenne. L'ordinarité du graphe n'est généralement pas préservée.

     \smallskip
   \noindent
   \textbf{Eclatement (Blow-up)} : Si $v$ est un sommet d'un graphe polarisé, et si $K_v$ est une coupure dans l'ordre cyclique des arêtes incidentes à $v$, l'éclatement de $K_v$ introduit une arête $e_{K_v}$ et un nouveau sommet $v_{K_v}$. L'arête relie $v$ à $v_{K_v}$ et les arêtes incidentes à $v$ sont partitionnées entre $v$ et $v_{K_v}$ selon la coupure. \smallskip

Cette opération préserve les marches (et en particulier les marches complètes) si dans la partition $K_v$, la première arête dans l'ordre cyclique est sortante et la dernière entrante. Cette opération doit respecter cette condition. Itérer cette opération en $v$ revient à partitionner les arêtes en plusieurs ensembles respectant l'ordre cyclique et la condition sortant-entrant.

En effectuant une suite d'éclatements en des sommets créés lors d'éclatements précédents, l'effet net est l'éclatement d'un sommet en un arbre dont toutes les nouvelles branches sont parcourues dans les deux sens par la marche complète. Nous parlerons donc d'\emph{éclatement élémentaire} lorsqu'une seule arête est créée. Voir la \cref{fig-eclat} pour une représentation graphique.
   \begin{figure}[h]
\centering
\begin{minipage}{.5\textwidth}
  \centering
  \includegraphics[width=.8\linewidth]{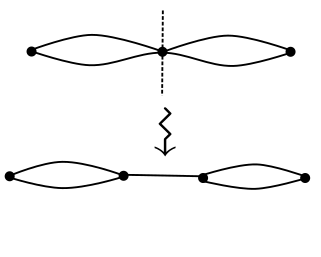}
\end{minipage}%
\begin{minipage}{.5\textwidth}
  \centering
  \includegraphics[width=.9\linewidth]{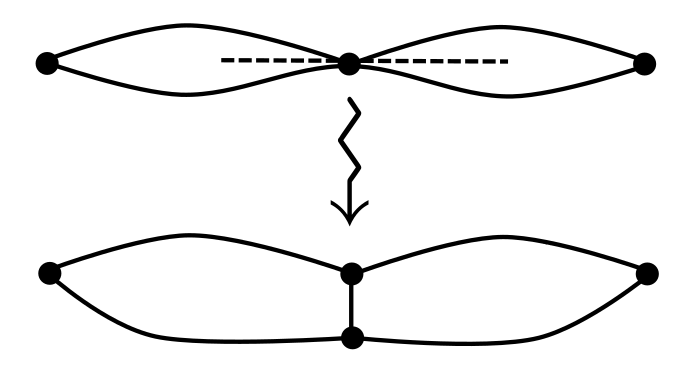}
\end{minipage}
\caption{Illustration de deux façons de faire l'éclatement} \label{fig-eclat}
\end{figure}  
\\
   \noindent
  \textbf{Chirurgie (Surgery)} : Si $e, f$ sont deux arêtes incidentes à un sommet $v$, de sorte qu'elles soient successives dans l'ordre cyclique, et que la marche entre par $e$ et ressorte à gauche par $f$, la chirurgie consiste à unir les arêtes $e$ et $f$, et donc à ne plus passer par $v$. Voir la \cref{fig-chirurgie} pour une représentation graphique. \smallskip

Evidemment, cette opération n'a aucun sens si l'une ou les deux arêtes sont parcourues dans les deux sens. Dans de tels cas il faut alors dédoubler les arêtes parcourues dans les deux sens (opération décrite plus bas) avant de faire cette la chirurgie.

La préservation d'une marche à gauche complète est claire. Cette opération préserve l'ordinarité d'un graphe et décroît la valence moyenne (si la valence moyenne de départ est supérieure à $2$). En général, la cellularité du graphe n'est pas préservée.  \\

   \noindent
   \textbf{Subdivision} : Pour une arête $e$ de $G$, nous ajoutons un sommets $v'$ au milieu de $e$ et produisons ainsi deux nouvelles arêtes incidentes à $v'$. \smallskip

Cette opération préserve assurément la présence d'une marche à gauche, ainsi que la cellularité et l'ordinarité d'un graphe. Si la valence moyenne de départ est supérieure à $2$, alors cette opération décroit la valence moyenne. \\

   \noindent
   \textbf{Arête parallèle (Parallel edge)} : Soit $G$ un graphe polarisé, disons plongé dans une surface. Supposons que la marche à gauche complète suive une chaîne $C$ d'arêtes orientées distinctes $e_1,  \ldots, e_{k-1}$ qui relie les points $p_1$ et $p_k$ (possiblement égaux) et telle que toutes les arêtes $\overline{e_1}, \ldots, \overline{e_{k-1}}$ soient aussi parcourues par la MC (pas forcément consécutivement). Nous pouvons alors introduire une nouvelle arête $e$ entre $p_1$ et $p_k$ tout juste à la gauche de $C$. \smallskip

Le graphe polarisé ainsi obtenu possède encore une marche complète, qui diffère de la MC originelle précisément du fait qu'elle suit l'arête orientée $e$ plutôt que la chaîne $C$. Cette opération préserve le caractère cellulaire d'un plongement. Si aucune arête ne lie directement $p_1$ et $p_k$, alors cette opération préserve l'ordinarité du graphe. La valence moyenne est aussi augmentée.

Un cas particulier de cette opération est le "doubling trick" que voici:
   
    \smallskip
   \noindent
   \textbf{Dédoublement (Doubling trick)} : Si $G$ est un graphe généralisé polarisé, disons plongé dans une surface, et si une arête $b$ du graphe, reliant $u$ à $v$, est parcourue deux fois dans la marche, donc dans les deux sens, le doubling trick consiste à introduire une nouvelle arête plongée $b'$ $C^1$-près de $b$. Voir la \cref{fig-doub} pour une représentation graphique. \smallskip

Il s'agit d'un cas particulier de l'addition d'une arête parallèle, la chaîne $C$ étant ici prise égale à l'arête orientée $\bar{b}$, qui se voit alors remplacée par une arête orientée $b'$ légèrement déplacée vers la gauche de $\bar{b}$. La marche à gauche visitera alors $b$ dans un sens, et $b'$ dans l'autre sens. Cette opération préserve les marches, elle est nécessaire pour l'opération de chirurgie. \\
 \begin{figure}[h]
\centering
 \begin{subfigure}{.5\textwidth}
  \centering
  \includegraphics[width=.7\linewidth]{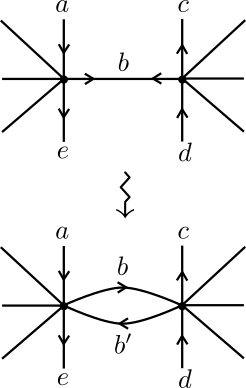}
  \caption{Le ``doubling trick''} \label{fig-doub}
\end{subfigure}%
\begin{subfigure}{.5\textwidth}
  \centering
  \includegraphics[width=0.3\linewidth]{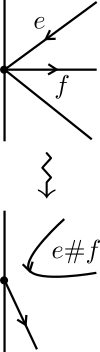}
  \caption{La chirurgie} \label{fig-chirurgie}
\end{subfigure}
\caption{}
\end{figure}  
   \noindent
   \textbf{Somme connexe (Connected sum)} : Soient $\Sigma_1$ et $\Sigma_2$ deux surfaces compactes orientées et $G_j \subset \Sigma_j$ ($j=1,2$) des graphes plongés cellulairement avec MC. Soient $F_j \subset \Sigma_j$ la $2$-cellule ouverte bordée par la MC de $G_j$. Effectuons la somme connexe $F_1 \# F_2$ de façon à obtenir une surface compacte orientée $\Sigma_1 \# \Sigma_2$. Traçons ensuite une arête $e'$ dans $F_1 \# F_2$ qui relie  $v_1 \in G_1$ à $v_2 \in G_2$\,; nous obtenons ainsi un graphe $G' \subset \Sigma_1 \# \Sigma_2$. Finalement, contractons $e'$ pour obtenir le graphe $G_1 \# G_2 \subset \Sigma_1 \# \Sigma_2$. \smallskip

Puisque $(F_1 \# F_2)\setminus e$ est un disque ouvert, les graphes $G'$ et $G_1 \# G_2$ sont cellulaires dans $\Sigma_1 \# \Sigma_2$. Les MNC de $G_1$ et de $G_2$ sont des MNC de $G'$. $G'$ possède une MC qui consiste à parcourir entièrement la MC de $G_1$ à partir de $v_1$ (en empruntant d'abord l'arête de $G_1$ qui suit l'arête $e$ dans l'ordre cyclique déterminée par la polarisation induite par l'orientation de $\Sigma_1 \# \Sigma_2$), puis à suivre $e$ vers $v_2$, à parcourir entièrement la MC de $G_2$ à partir de $v_2$ (en empruntant d'abord l'arête qui suit $e$ dans l'ordre cyclique déterminée par la polarisation), puis à revenir à $v_1$ via $e$. Il en résulte que $G_1 \# G_2$ possède aussi une MC. Voir la \cref{fig-SommeConnexe} pour une représentation graphique.

En tant que graphe abstrait, $G_1 \# G_2$ est obtenu de l'union disjointe $G_1 \sqcup G_2$ en identifiant les sommets $v_1$ et $v_2$. Soit $v_{\#} \in G_1 \# G_2$ le sommet correspondant. Alors $S(G_1 \# G_2) = S(G_1) + S(G_2) - 1$, $V_r(G_1 \# G_2)_{v_{\#}} = V_r(G_1)_{v_1} + V_r(G_2)_{v_2}$ et $V_r(G_1 \# G_2)_{v} = V_r(G_j)_{v}$ pour tout $v \neq v_{\#}$ issu de $G_j$ (les mêmes relations tiennent pour la valence totale). Il s'ensuit l'identité
\begin{align}
\tag{$\#$}  \notag  V_r(G_1 \# G_2) &= V_r(G_1) \, \dfrac{S(G_1)}{S(G_1) + S(G_2) - 1} + V_r(G_2) \, \dfrac{S(G_2)}{S(G_1) + S(G_2) - 1}   \, .
\end{align}
\noindent Il est clair que $G_1 \# G_2$ est ordinaire si $G_1$ et $G_2$ le sont.

\begin{figure}[h]
\centering
\includegraphics[angle=90, width=1.2\textwidth]{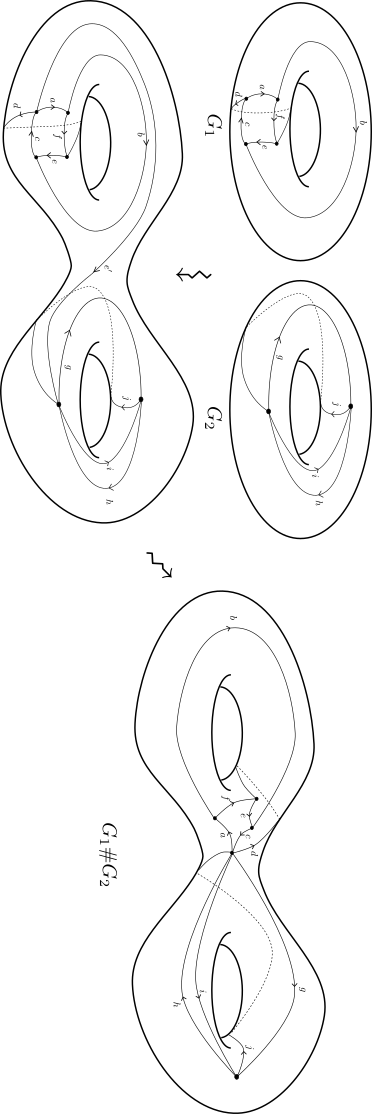}
\caption{Somme connexe}\label{fig-SommeConnexe}
\end{figure}

%%%%%%%%%%%%%%%%%%%%%%%%%% Graphes homotopiques

\section{Résultats sur les graphes homotopiques} \label{sec-Sgraphes}

Dans cette section, nous prouvons le \cref{Thm-Sgraphe} à travers les propositions \ref{Prop-HomGenusConstraint} et \ref{Prop-HomGenusOptimal} ci-dessous. Pour un graphe polarisé $(G,P)$, nous définissons $g_{hom}(G,P)$ comme étant le minimum $g \in \N$ tel qu'il existe un plongement polarisé de $(G,P)$ dans $\Sigma_g$ une surface de genre $g$ de telle sorte que le graphe plongé soit homotopique. Pour $i \in \Z_{>0}$, nous désignons par $\ell_i:= \ell_i(G,P)$ le nombre de marches à gauche dans $(G,P)$ de longueur $i$.
\begin{prop}\label{Prop-HomGenusComp}
Pour tout graphe polarisé $(G,P)$,
\begin{align*}
\gamma(G,P) +  \frac{\ell_1 + \ell_2}{2} &\leq g_{hom}(G,P).
\end{align*}
\end{prop}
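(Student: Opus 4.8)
The plan is to fix a homotopic polarized embedding $\phi : (G,P) \to \Sigma_g$ realizing the minimum $g = g_{hom}(G,P)$ and to extract the inequality from a single Euler characteristic computation. The key structural point is that $\gamma$, $\ell_1$ and $\ell_2$ depend only on $(G,P)$ and not on the chosen embedding, so it suffices to prove $g \ge \gamma + (\ell_1+\ell_2)/2$ for an \emph{arbitrary} homotopic embedding; the bound then passes to the minimum.

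First I would write $\Sigma_g \setminus G = \bigsqcup_j R_j$ as its connected components, each $R_j$ being an open surface of some genus $h_j \ge 0$ with $b_j \ge 1$ boundary circles. The essential bookkeeping fact, which I would recall from the rotation-system formalism underlying the Théorème fondamental, is that the boundary circles of the regions are traced exactly by the left-walks of $(G,P)$: each oriented edge borders a unique region on its left and each walk closes up into a single boundary circle, so these circles are in bijection with the $F$ walks and $\sum_j b_j = F$. Writing $R$ for the number of regions and $H = \sum_j h_j$, additivity of the Euler characteristic over the stratification $\Sigma_g = G \sqcup \bigsqcup_j R_j$ gives $2 - 2g = (S-A) + \sum_j (2 - 2h_j - b_j) = (S-A) + 2R - 2H - F$. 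Substituting $S - A = \chi(G,P) - F = (2-2\gamma) - F$ and simplifying yields the clean identity
\[ g = \gamma + (F-R) + H = \gamma + \sum_j \big[(b_j - 1) + h_j\big]. \]
Setting $c_j := (b_j - 1) + h_j \ge 0$, it remains to prove $\sum_j c_j \ge (\ell_1+\ell_2)/2$.

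The heart of the argument — and the step deserving the most care — is using the homotopic hypothesis to control the disk regions, i.e. those with $c_j = 0$ (equivalently $h_j = 0$ and $b_j = 1$). I claim no disk region can be bounded by a walk of length $1$ or $2$: a disk bounded by a length-$1$ walk exhibits a contractible loop, violating condition (i); a disk bounded by a length-$2$ walk exhibits a pair of distinct parallel edges $e, f$ with $e\overline{f}$ contractible, violating condition (ii) (the only alternative, a single edge traversed in both directions, forces a one-edge component and is excluded since $G$ is connected with at least two edges). Hence every walk of length $1$ or $2$ is a boundary circle of some region with $c_j \ge 1$. Since a region carries $b_j = c_j + 1 - h_j \le c_j + 1$ boundary circles, summing over the regions with $c_j \ge 1$ gives
\[ \ell_1 + \ell_2 \le \sum_{c_j \ge 1} b_j \le \sum_{c_j \ge 1} (c_j + 1) = \sum_{c_j \ge 1} c_j + \big|\{j : c_j \ge 1\}\big| \le 2 \sum_j c_j, \]
the last step holding because each counted region contributes $c_j \ge 1$. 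Combined with the identity above this is exactly $g \ge \gamma + (\ell_1+\ell_2)/2$.

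The genuine obstacle is therefore not the counting, which is routine, but the geometric correspondence: pinning down that left-walks trace the region boundaries and that short walks bounding disks are precisely the forbidden contractible loops and bigons. I would also explicitly flag the single-edge component as the one degenerate configuration in which a length-$2$ walk may bound a disk without contradicting homotopy, and note that the standing connectedness assumption (with at least two edges) removes it, so the inequality holds as stated. As a sanity check I expect the single non-contractible loop ($\gamma = 0$, $\ell_1 = 2$, $g_{hom} = 1$) to saturate the bound, the complementary cylinder on the torus contributing exactly $c_1 = 1$.
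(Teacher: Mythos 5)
Your proof is correct and takes a genuinely different route from the paper's. The paper argues by iterated surgery\,: for each left-walk $W$ of length $1$ or $2$ it pushes $W$ off to an embedded curve $\alpha$ lying just to its left, notes that $\alpha$ is non-contractible because the embedded graph is homotopic, cuts $\Sigma_g$ along $\alpha$ and caps the boundary circles with disks\,; each such cut lowers the genus by at least $1$ while making at most two short walks contractible, so after exhausting the short walks one obtains a polarized embedding of $(G,P)$ into a surface of genus at most $g_{hom} - (\ell_1+\ell_2)/2$, which the fundamental theorem (\cref{thm-ThmFond}) bounds below by $\gamma(G,P)$. You instead establish the exact identity $g = \gamma + \sum_j \left[(b_j-1)+h_j\right]$ in one shot, via Euler-characteristic additivity over $G \sqcup \bigsqcup_j R_j$ together with the walk/boundary-circle correspondence, and then convert the homotopic hypothesis into the purely local statement that no short walk can bound a disk region\,; the inequality becomes elementary counting. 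Your route buys a non-iterative argument, a quantitative refinement of the fundamental theorem (the defect $g-\gamma$ is exactly the total \emph{non-disk} defect $\sum_j c_j$, recovering the clause that equality holds iff the embedding is cellular), and a sharp localization of where homotopy is used\,; the paper's route buys a more geometric argument that only needs tubular neighborhoods of the short walks (never the region decomposition of a non-cellular embedding) and that explicitly constructs the lower-genus embedding. One further point in your favour\,: the single-edge graph, whose unique walk $e\bar{e}$ has length $2$ with $\gamma = 0$ and $g_{hom} = 0$, really is a counterexample to the statement as literally written, and the paper's proof silently fails on it as well (the pushed-off curve $\alpha$ is contractible there, contrary to what that proof asserts), so your explicit exclusion of that degenerate configuration is added care, not a gap.
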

\begin{proof}
Soit $\phi : (G,P) \rightarrow \Sigma_g$ un plongement polarisé de $(G,P)$ dans une surface de genre $g= g_{hom}(G,P)$ tel que $\phi(G)$ soit un graphe homotopique (par la suite, nous écrivons simplement $G$ pour désigner $\phi(G)$). Soit $W$ une marche à gauche de longueur $1$ ou $2$ et soit $N$ un $\epsilon$-voisinage de $W$ dans $\Sigma_g$. Pour $\epsilon >0$ suffisamment petit, la composante connexe de la frontière de $N$ qui se trouve à gauche de $W$ (par rapport à l'orientation donnée par la traversée des arêtes dans la direction décrite par $W$) est l'image d'une courbe plongée $\alpha$ qui est homotope à $W$. Notons que $\alpha$ n'est pas contractile, puisque $G$ est homotopique par hypothèse.

En coupant le long de $\alpha$ et en collant deux disques le long des deux composantes de bord, alors selon que $\alpha \subset \Sigma$ soit non séparante ou séparante, nous obtenons soit une surface fermée $\Sigma'$, soit une union disjointe de deux surfaces fermées $\Sigma'$ et $\Sigma''$ avec disons $G \subset \Sigma'$. Dans les deux cas de figure, la surface $\Sigma'$ a genre $g-1$ ou moins\,: dans le premier cas, cela résulte du fait que $\Sigma$ s'obtient de $\Sigma'$ par l'ajout d'une anse\,; dans le second cas, c'est parce que $\Sigma$ est la somme connexe de $\Sigma'$ et $\Sigma''$ et que $\Sigma''$ n'est pas une sphère puisque $\alpha$ n'est pas contractile.

Bref, nous obtenons un plongement de $(G,P)$ dans une surface fermée de genre au plus $g-1$ telle que $W$ et possiblement une seule autre marche $W'$ de longueur $1$ ou $2$ soient contractiles dans $\Sigma'$ (la marche $W'$ serait homotope à la deuxième composante de bord de $\Sigma \setminus \alpha$ dans le cas où $\alpha$ est non séparante). Il est clair que nous pouvons répéter ce processus tant qu'il reste des marches de longueur $1$ ou $2$ qui ne sont pas contractiles, et ainsi le genre peut être réduit d'au moins $\frac{\ell_1 + \ell_2}{2}$. Puisque $\gamma(G,P)$ est une borne inférieure pour le genre de toute surface dans laquelle $(G,P)$ se plonge, l'affirmation s'ensuit.
\end{proof}
Pour $S \in \N$, posons $\pi(S)=0$ si $S$ est pair et posons $\pi(S)=1$ si $S$ est impair.
\begin{prop}\label{Prop-HomGenusConstraint}
Soit $G$ un $S$-graphe homotopique plongé dans une surface $\Sigma$ de genre $g$. Si $G$ admet une marche à gauche complète, alors $G$ possède au plus $3g + \left\lfloor \dfrac{3(S-1)}{2} \right\rfloor$ arêtes, c'est-à-dire que
\[  V(G) \le 3 + \dfrac{6g - 4 + \pi(S)}{S} \, . \]
\end{prop}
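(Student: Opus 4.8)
Le plan est de combiner l'estimée de genre de la \cref{Prop-HomGenusComp} avec un décompte des marches à gauche selon leur longueur, en exploitant le fait que la marche complète est nécessairement longue. Puisque le plongement donné $\phi : G \to \Sigma$ est homotopique et que $\Sigma$ est de genre $g$, la polarisation induite $P$ satisfait $g_{hom}(G,P) \le g$ par définition de $g_{hom}$. La \cref{Prop-HomGenusComp} fournit alors
\[ \gamma(G,P) + \frac{\ell_1 + \ell_2}{2} \le g. \]
En écrivant $\gamma = 1 - \chi/2$ avec $\chi = S - A + F$ et en chassant les dénominateurs, cette inégalité se réarrange en
\[ A \le 2g - 2 + S + (F - \ell_1 - \ell_2) = 2g - 2 + S + N, \]
où $N := \sum_{i \ge 3} \ell_i = F - \ell_1 - \ell_2$ désigne le nombre de marches à gauche de longueur au moins $3$.

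Le c\oe ur de l'argument consiste ensuite à majorer $N$. Pour cela, j'utiliserais le fait que les orbites de $\tau$ partitionnent l'espace de phase $\mathcal{P}(G)$, qui compte exactement $2A$ éléments\,; la somme des longueurs de toutes les marches vaut donc $2A$. Comme la marche complète emprunte chacune des $A$ arêtes au moins une fois, elle contient au moins l'une des deux arêtes orientées de chaque arête, et sa longueur est donc au moins $A$. Par conséquent, les marches restantes utilisent au total au plus $2A - A = A$ arêtes orientées\,; chaque marche de longueur $\ge 3$ en consommant au moins trois, il y a au plus $A/3$ marches non complètes de longueur $\ge 3$. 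En ajoutant la marche complète elle-même, on obtient $N \le 1 + A/3$ (inégalité valide sans condition, le cas $A < 3$ forçant l'absence de marche non complète longue).

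En substituant, $A \le 2g - 1 + S + A/3$, d'où $\tfrac{2}{3} A \le 2g - 1 + S$ et $A \le 3g + \tfrac{3(S-1)}{2}$. Comme $A$ et $3g$ sont entiers, ceci se renforce en $A \le 3g + \lfloor 3(S-1)/2 \rfloor$, soit la borne annoncée sur le nombre d'arêtes\,; en traduisant via $V(G) = 2A/S$ et en distinguant la parité de $S$ (encodée par $\pi(S)$), on récupère l'inégalité de valence voulue. Le principal obstacle est la majoration $N \le 1 + A/3$\,: tout repose sur l'observation qu'une marche complète est forcément longue (de longueur $\ge A$), ce qui à la fois plafonne le nombre de marches longues concurrentes et produit le coefficient décisif $2/3$. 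Le décompte de parité permettant de passer de $3(S-1)/2$ à sa partie entière (et donc de retrouver le terme $\pi(S)$ exact) est routinier, mais doit être mené avec soin, car c'est précisément lui qui rend la borne optimale.
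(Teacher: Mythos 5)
Votre preuve est correcte et emprunte essentiellement la même voie que l'article\,: même recours à la \cref{Prop-HomGenusComp} combinée à $g_{hom}(G,P)\le g$, même utilisation de la formule $2-2\gamma = S-A+F$ et du décompte d'arêtes orientées fondé sur le fait que la marche complète a longueur au moins $A$ (d'où au plus $1+A/3$ marches de longueur $\ge 3$), puis même argument d'intégralité pour obtenir la partie entière. La seule différence est de présentation\,: vous éliminez $\ell_1+\ell_2$ par un réarrangement direct de l'inégalité de genre, là où l'article conserve ces termes avec un décompte légèrement plus fin ($F \le 1+\ell_1+\ell_2+\frac{A-\ell_1-2\ell_2}{3}$) et conclut par l'absurde via $\frac{\ell_1}{2}+\ell_2\le -1$\,; les deux combinaisons donnent exactement la même borne finale.
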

\begin{proof}
Puisque $G$ a un nombre entier d'arêtes, il suffit de montrer que ce nombre vaut tout au plus $3g + 3(S-1)/2$. Supposons en vue d'une contradiction qu'il existe un $S$-graphe polarisé $(G,P)$ avec au moins $3g+3(S-1)/2 + 1$ arêtes, qui admet une marche à gauche complète et tel qu'il existe un plongement polarisé $\phi : (G,P) \rightarrow \Sigma_g$ tel que $\phi(G)$ est homotopique. D'après la définition de $\gamma=\gamma(G,P)$, nous avons $2-2\gamma = S - A +F$ et $F \leq 1 + \ell_1 + \ell_2 - \frac{A-\ell_1-2\ell_2}{3}$, puisque $(G,P)$ est un graphe admettant une marche à gauche complète, chaque marche non complète de longueur $i$ nécessite exactement $i$ arêtes et chaque arête appartient à au plus une marche non complète. Ceci implique 
\begin{align*}
3g + \dfrac{3(S-1)}{2} +1 \leq & \, A \leq 3 \gamma + \dfrac{3(S-1)}{2} + \ell_1 +\frac{\ell_2}{2},
\end{align*}
d'où $3(g- \gamma)+1 \leq \ell_1 + \frac{\ell_2}{2}$. D'après la définition de $g_{hom}=g_{hom}(G,P)$, nous avons $g_{hom} \leq g$ de sorte que $3(g_{hom}- \gamma)+1 \leq \ell_1 + \frac{\ell_2}{2}$. La \cref{Prop-HomGenusComp} implique
\begin{align*}
3\frac{\ell_1 + \ell_2}{2} +1 &\leq \ell_1 + \frac{\ell_2}{2} \\
 \Leftrightarrow \; \; \frac{\ell_1}{2} + \ell_2 &\leq -1,
\end{align*}
ce qui est en contradiction évidente avec le fait que $\ell_1$ et $\ell_2$ sont des entiers naturels. 
\end{proof}

\begin{prop}\label{Prop-HomGenusOptimal}
Pour tout $S, g \geq 1$, il existe un $S$-graphe homotopique plongé cellulairement dans $\Sigma_g$,  qui admet une marche à gauche complète et qui a \[ 3g + \left\lfloor \dfrac{3(S-1)}{2}  \right\rfloor \; \mbox{ arêtes,}\]
de sorte que
\[  V(G) = 3 + \dfrac{6g - 4 + \pi(S)}{S} \, . \]
\end{prop}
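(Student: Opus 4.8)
The plan is to realize the required graphs as iterated connected sums (in the sense of \cref{sec-operations}) of three explicit elementary blocks, and to read the edge count and the valence directly off the additivity of that operation. Recall that the connected sum of two cellularly embedded polarized graphs carrying an MC again carries an MC and is cellular, that the genera add, and that $S(G_1 \# G_2) = S(G_1) + S(G_2) - 1$ while $A(G_1 \# G_2) = A(G_1) + A(G_2)$. I would use three blocks: (i) a monographe $M_1$ consisting of three loops $a,b,c$ at a single vertex, cellularly embedded in the torus $\Sigma_1$; (ii) a single edge $K_2$ cellularly embedded in the sphere; and (iii) a triangle $T$ cellularly embedded in the sphere.

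The first task is to check that each block is homotopic, cellular, and carries an MC. For $K_2$ and $T$ this is immediate: each has a face whose boundary walk traverses every edge, both are simple so the homotopic condition is vacuous, and both have genus $0$. The only block requiring an explicit rotation is $M_1$. Writing the two darts of $a$ as $a_1,a_2$ (and similarly for $b,c$), I would take the cyclic order $(a_1\,b_1\,c_1\,a_2\,b_2\,c_2)$ at the vertex; tracing faces yields exactly two triangular faces $\{a_1,b_2,c_1\}$ and $\{a_2,b_1,c_2\}$, so $\chi = 1 - 3 + 2 = 0$ and the genus is $1$. Each face traverses all three edges, hence is a complete walk, giving the MC. A short homology computation in $H_1(\Sigma_1) = \Z^3/\langle a - b + c\rangle \cong \Z^2$ shows that the classes of $a,b,c$ and of $a\overline{b},\,b\overline{c},\,a\overline{c}$ are all nontrivial, so $M_1$ is homotopic; this also reproves the realization part of \cref{Thm-monographe} via $M_1^{\#g}$.

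I would then assemble, setting $k := \lfloor 3(S-1)/2\rfloor$ aside and choosing, for $S$ odd, $G := M_1^{\#g} \# T^{\#(S-1)/2}$, and for $S$ even, $G := M_1^{\#g} \# K_2 \# T^{\#(S-2)/2}$, all connected sums being performed at the central loop-bearing vertex. Since only the $M_1$-summands carry genus, $\gamma(G) = g$; the vertex count is $1 + 2\cdot\tfrac{S-1}{2} = S$ (resp. $1 + 1 + 2\cdot\tfrac{S-2}{2} = S$); and the edge count is $3g + 3\cdot\tfrac{S-1}{2}$ (resp. $3g + 1 + 3\cdot\tfrac{S-2}{2}$), which in both cases equals $3g + \lfloor 3(S-1)/2\rfloor$. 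The valence identity $V(G) = 2A/S = 3 + (6g - 4 + \pi(S))/S$ then follows by direct substitution, and both cellularity and the presence of an MC are inherited at each step from the connected-sum operation.

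The step demanding the most care — and the one I expect to be the main obstacle — is verifying that the assembled $G$ is homotopic, since \cref{sec-operations} only records that connected sum preserves MC, cellularity, and the arithmetic of $(S,A,\gamma)$. I would argue as follows: every loop of $G$ lies in a single $M_1$-summand, and the inclusion of a punctured summand into a connected sum is $\pi_1$-injective, so no loop becomes contractible (condition (i)); loops coming from distinct summands are homologically independent in distinct handles, which together with the computation above for loops inside a single copy handles all pairs $e\overline{f}$ of loops. Finally, $G$ has no two parallel edges between distinct vertices at all, since the triangles and the extra edge contribute only simple edges incident to pairwise distinct new vertices; hence condition (ii) is either covered by the $M_1$ homology computation or vacuous. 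Combined with \cref{Prop-HomGenusConstraint}, this would complete the proof of \cref{Thm-Sgraphe}.
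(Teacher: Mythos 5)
Your proposal is correct, but it takes a genuinely different route from the paper's own proof. The paper works inside a single explicit polygon model: it presents $\Sigma_g$ as a translation surface (a $4g$-gon with identifications), takes the resulting monograph $G_0$ with $2g$ loops and a single (hence complete) walk, subdivides the side $a_1$ into $S'$ arcs (where $S'$ is the largest odd integer $\le S$), adds $(S'+1)/2$ parallel edges over consecutive pairs of these arcs and $g-1$ diagonals parallel to the chains $a_{k+1}b_{k+1}$, and finally subdivides one extra edge when $S$ is even; the homotopic condition is then checked by listing the homology classes of the added edges directly in the polygon picture. You instead assemble the graph by iterated connected sum of three pre-verified blocks, which trades the explicit global picture for modularity: your rotation $(a_1\,b_1\,c_1\,a_2\,b_2\,c_2)$ on $M_1$ does yield two triangular faces, each a complete walk, of genus $1$, with the classes $[a],[b],[c]$ and their pairwise differences nonzero in $H_1(\Sigma_1)\cong\Z^2$; after that, the counts of $S$, $A$ and $\gamma$ follow from the additivity recorded in \cref{sec-operations}, and the homotopic condition reduces to the splitting of $H_1$ under connected sum (homology alone suffices here; the $\pi_1$-injectivity remark is true but superfluous, since non-vanishing in $H_1$ already rules out contractibility). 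Your construction also has the merit of exhibiting the realization part of \cref{Thm-monographe} transparently as $M_1^{\# g}$. What the paper's route buys in exchange is the explicit model itself: the resulting \emph{monographes optimaux standard} are reused later in the article (for instance in \cref{prop-existence} and in the optimal genus $1$ and $2$ examples obtained from them by blow-ups), whereas your graphs are different ones (a flower of triangles at the loop-bearing vertex rather than a subdivided side carrying parallel edges), equally valid for the statement at hand.
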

\begin{proof}
Pour tout $g \geq 1$, nous commençons par construire la surface $\Sigma_g$ de genre $g$ comme surface de translation, c'est-à-dire au moyen d'un polygone $P=P_{4g}$ à $4g$ côtés cycliquement identifiés $a_1,b_1, \ldots, a_g, b_g, \bar{a}_1,\bar{b}_1, \ldots, \bar{a}_g,\bar{b}_g$ et d'une application de quotient $q: P \rightarrow \Sigma_g$ qui identifie $a_i$ et $b_i$ avec $\bar{a}_i$ et $\bar{b}_i$ respectivement via des homéomorphismes inversant l'orientation, pour $i=1,\ldots,g$. Il est utile de dénoter les sommets du polygone par $p_1, p_2, \dots, p_{4g}$, où $a_1 = p_1p_2$, $b_1 = p_2p_3$, etc. Bien sûr, tous ces $p_i$ sont identifiés par $q$ au même sommet $v$. Observons que le monographe $G_0 \subset \Sigma_g$ résultant a $2g$ lacets qui appartiennent à des classes d'homotopie (et même d'homologie) distinctes et qu'il admet une seule marche à gauche (forcément complète).

Ensuite, soit $S'$ le plus grand entier impair $\le S$. Nous subdivisons l'arête $a_1$ en $S'$ arêtes, introduisant de ce fait $S'-1$ sommets $p'_1, \ldots, p'_{S'-1}$ entre $p_1$ et $p_2$, dans cet ordre. Dans $\Sigma_g$, tous ces points sont distincts. (Si $S = 1$, aucun sommet n'est alors ajouté.) Posons $p'_0 = p_1$, $p'_{S'} = p_2$ et $p'_{S'+1} = p_3$.   Pour $j = 1, \ldots, (S'+1)/2$, considérons aussi les chaînes orientées $C_j = p'_{2j-2}p'_{2j-1}p'_{2j}$\,; ce sont des segments de la marche complète sur $G_0$ et nous pouvons donc ajouter une arête parallèle $e_j$ à chaque $C_j$. L'effet net est d'obtenir une arête $e_j$ liant les points $p'_{2j-2}$ et $p'_{2j}$ pour tout $j = 1, \ldots, (S'+1)/2$. Similairement, pour chaque $k = 1, \ldots, g-1$, nous relions les sommets $p_{2k+1}$ et $p_{2k+3}$ par une arête $d_k$ parallèle à la chaîne $a_{k+1}b_{k+1}$. (Voir la \cref{fig-Prop-5.3-Fig} pour une représentation de cette construction).

Par projection par $q$, nous obtenons ainsi un graphe plongé $G_1 \subset \Sigma_g$ avec  $S'$ sommets et $3g + 3(S'-1)/2$ arêtes et qui admet une marche complète. Nous affirmons que $G_1$ est homotopique. D'une part, si $S' > 1$, alors chaque arête $q(e_j)$ est l'unique arête dans $G_1$ à relier ses extrémités, qui sont des sommets distincts. Si $S'=1$, alors la seule arête $q(e_1)$ relie les points $q(p_1) = q(p_3) = v$ et représente la classe d'homologie $[q(a_1)] + [q(b_1)]$. D'autre part, les arêtes $q(f_k)$ relient toutes le même sommet $v$, mais elles représentent les classes d'homologie distinctes $[q(a_{k+1})] + [q(b_{k+1})]$. Dans tous les cas, les arêtes $q(e_j)$ et $q(f_k)$ ajoutées au graphe $G_0$ pour obtenir $G_1$ appartiennent à des classes d'homologie distinctes entre elles et distinctes des classes d'homologie des arêtes présentes dans $G_0$.

Si $S' = S$, alors $G := G_1$ satisfait l'affirmation. Si $S = S'+1$, alors nous pouvons subdiviser n'importe quelle arête pour obtenir un graphe $G := G_2$ ayant $S$ sommets et $3g + 3(S'-1)/2 + 1 = 3g + \left\lfloor 3(S-1)/2  \right\rfloor$ arêtes. Il est clair que $G$ admet une marche à gauche complète, qu'il est homotopique et qu'il est plongé cellulairement.

\end{proof}

\begin{figure}[h]
\centering
 \begin{subfigure}{.49\textwidth}
  \centering
  \includegraphics[width=\linewidth]{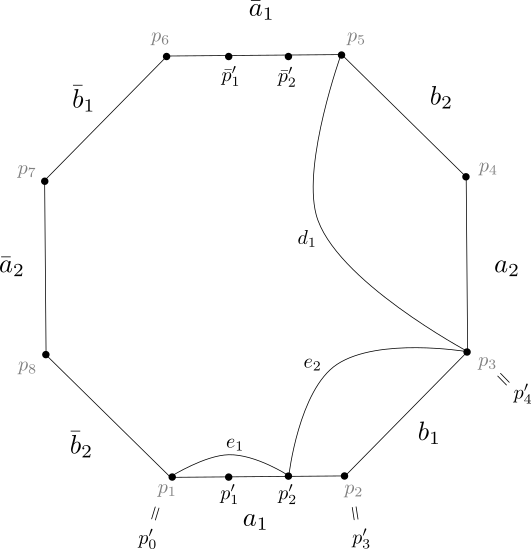}
\caption{Cas $g=2$, $S=S'=3$ de la construction dans la preuve de la \cref{Prop-HomGenusOptimal}}\label{fig-Prop-5.3-Fig}
\end{subfigure}%
\hspace{4pt}
\begin{subfigure}{.49\textwidth}
  \centering
  \includegraphics[width=0.9\linewidth]{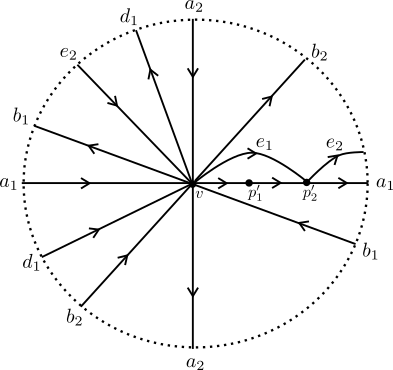}
\vspace{12pt}
  \caption{Un voisinage du sommet $v=q(p_1)$ dans $G_1$ dans la preuve de la \cref{Prop-HomGenusOptimal}. Les flèches indiquent la direction de la marche complète vers la gauche.}
\end{subfigure}
\caption{}
\end{figure}  

Le cas $S=1$ nous sera particulièrement utile\,: la construction décrite ci-dessus fournit alors un monographe à $3g$ lacets, obtenu à partir du polygone de côtés  $a_1,b_1, \ldots, a_g, b_g, \bar{a}_1,\bar{b}_1, \ldots, \bar{a}_g,\bar{b}_g$ en tirant $g$ diagonales $d_1, \ldots, d_g$ où $d_j$ relie le point source de $a_j$ au point cible de $b_j$. Nous appellerons ces monographes précis \emph{monographes optimaux standard}.

%%%%%%%%%%%%%%%%%%% Inégalités structurelles

\section{Inégalités structurelles}\label{sec-ineg_struc}

\subsection{Inégalités pour la valence totale}

Soit un graphe généralisé polarisé $(G, P)$ ayant une marche à gauche complète et satisfaisant la condition suivante\,:\vspace{6pt}

\begin{itemize}
\item[ (C)] Le graphe polarisé $(G,P)$ n'a pas de marche à gauche de longueur 1 ou 2.\\
\end{itemize}

\noindent Observons que les graphes ordinaires satisfont la condition (C). Afin d'alléger les notations, posons $S = S(G)$, $A = A(G)$ et $V := V(G) =  (\sum_{v \in G} V(G)_v)/S$ (où $V$ dénote la valence totale).

Puisque le nombre de demi-arêtes dans $G$ est $2A$ et est aussi $\sum_{v \in G} V(G)_v$, ceci implique
\[ \tag{$\spadesuit$}   A = \dfrac{V \, S }{2} \, . \]

\noindent Chacune des $2A$ arêtes \emph{orientées} de $G$ appartient à une unique orbite de la marche à gauche sur $(G,P)$. La MC a longueur au moins $A$, de sorte que toutes les MNC couvrent ensemble au maximum $A$ arêtes. La condition (C) implique que chacune de ces MNC a longueur $3$ ou plus. Tout ceci implique donc
\[ \tag{$\heartsuit$}   F \le  1 + \dfrac{A }{3} \, . \]

\noindent Par définition du genre $\gamma := \gamma(G,P)$, nous avons $2 - 2\gamma = S - A + F$. Cette égalité, ($\spadesuit$) et ($\heartsuit$) donnent donc, après redistribution des termes et utilisation à nouveau de ($\spadesuit$)\,:
\[   V \le 3 + \dfrac{6\gamma - 3}{S}  \, . \]
\noindent Une inégalité un peu plus forte tient\,: par l'inégalité précédente et ($\spadesuit$), nous avons
\[ V = \dfrac{2A}{S} \le  3 + \dfrac{6\gamma-3}{S} = \dfrac{3(S-1) + 6\gamma}{S} \; \Leftrightarrow \; 2A \le 3(S-1) + 6\gamma .  \]
Puisque cette dernière inégalité compare des entiers et que $2A$ est pair, nous pouvons renforcer cette inégalité comme suit, où $\pi(n) \in \{0,1\}$ dénote la parité du nombre entier $n$\,:
\[  2A \le 3(S-1) + 6\gamma - 1 + \pi(S)  \,  ,  \]
ce qui donne
\[ \tag{$\diamondsuit$} V \le 3 + \dfrac{6\gamma - 4 + \pi(S)}{S}  \, . \]\smallskip

\subsection{Inégalités pour la valence réduite}

Observons que par définition de la valence réduite, nous avons $V_r(G)_v \le S(G) - 1$ pour tout sommet $v \in G$. Ainsi,  nous avons
\[ \tag{$\clubsuit_r$} V_r \le S-1 \, . \]
Similairement, puisque $V_r(G)_v \le V(G)_v$ pour tout $v \in G$, nous avons $V_r \le V$ et donc ($\diamondsuit$) implique
\[\tag{$\diamondsuit_r$} V_r \le 3 + \dfrac{6\gamma - 4 + \pi(S)}{S} \, .\]
Finalement, nous avons
\[ \tag{$\spadesuit_r$}   A_r = \dfrac{V_r \, S_r }{2} \, . \]

%%%%%%%%%%%%%%%%%%%%%%%%%% Graphes généralisés

\section{Résultats pour les graphes généralisés} \label{sec-generalise}

\subsection{Réduction aux graphes sans courte marche}

Nous débutons d'abord par une procédure de réduction du graphe polarisé.

\begin{lem}\label{lem-reduction}
Soit $(G,P)$ un graphe généralisé polarisé ayant une marche complète et $S \ge 3$. Alors il existe un graphe généralisé polarisé $(G', P')$ qui a une marche complète et qui satisfait la condition (C), qui ne diffère de $G$ que par le retrait de certaines arêtes, tel que $\gamma(G', P') = \gamma(G,P)$ et tel que $V_r(G')_v = V_r(G)_v$ pour tout $v \in \mathcal{S}(G) = \mathcal{S}(G')$.
\end{lem}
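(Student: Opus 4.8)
Le plan est de procéder de manière itérative (récurrence sur le nombre d'arêtes $A$), en retirant à chaque étape une seule arête bien choisie qui détruit une marche à gauche courte (de longueur $1$ ou $2$) tout en préservant la marche complète, le genre $\gamma$ et la valence réduite $V_r(\cdot)_v$ en chaque sommet. À chaque étape, la polarisation $P'$ est celle induite par $P$ après suppression, dans chaque ordre cyclique, des arêtes orientées retirées. Puisque chaque retrait diminue strictement $A$ et que le procédé ne peut s'arrêter que lorsqu'il ne subsiste aucune marche courte, on aboutit après un nombre fini d'étapes à un graphe satisfaisant la condition (C). On notera que la marche complète parcourt les $A \ge S-1 \ge 2$ arêtes et a donc une longueur $\ge A$\,; pour un graphe connexe à $S \ge 3$ sommets, elle n'est jamais de longueur $1$ ou $2$, de sorte que toute marche courte rencontrée est nécessairement une MNC, distincte de la MC.

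Je commencerais par classifier les marches courtes au moyen de l'application $\tau = \tau_{(G,P)}$. Une marche de longueur $1$ est une arête orientée $e$ fixée par $\tau$\,; en explicitant la relation $\tau(e)=e$ à partir de la définition de $\tau$, on constate que $e$ est l'une des deux orientations d'un lacet dont les demi-arêtes sont cycliquement consécutives, c'est-à-dire d'un lacet bordant un monogone. Une marche de longueur $2$ est une paire $\{x,y\}$ d'arêtes orientées avec $\tau(x)=y$ et $\tau(y)=x$. Si $x$ et $y$ étaient les deux orientations d'une seule et même arête, celle-ci aurait ses deux extrémités de valence $1$, ce qui contredit la connexité et l'hypothèse $S \ge 3$\,; ainsi $x$ et $y$ proviennent de deux arêtes distinctes $e_1 \neq e_2$ reliant une même paire de sommets et formant un bigone (deux arêtes parallèles, ou deux lacets en un même sommet).

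Dans chacun de ces cas, soit $e$ l'arête dont une orientation $x$ appartient à la marche courte $W$. Comme la MC parcourt $e$ et que $x$ n'appartient pas à la MC (car $W$ en est distincte), l'autre orientation $\bar{x}$ appartient à la MC. L'idée clé est alors de retirer $e$\,: les deux orientations $x$ et $\bar{x}$ se trouvant dans deux marches distinctes ($W$ et la MC), le retrait de $e$ — qui revient à exciser $x$ et $\bar x$ des ordres cycliques aux deux extrémités de $e$ — fusionne ces deux marches en une seule. On a donc $F \mapsto F-1$ et $A \mapsto A-1$, de sorte que $\gamma = 1 - (S-A+F)/2$ est inchangé. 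La marche fusionnée est la nouvelle marche complète\,: elle absorbe les arêtes de $W$ et conserve toutes les arêtes de l'ancienne MC sauf $e$, donc elle parcourt encore toutes les arêtes restantes. La marche courte $W$ est ainsi éliminée.

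Il reste à vérifier la préservation de la valence réduite. Dans le cas du monogone, $e$ est un lacet, qui ne contribue à aucune valence réduite. Dans le cas du bigone, $e = e_1$ est l'une d'au moins deux arêtes reliant une même paire de sommets distincts (ou bien un lacet)\,; l'autre arête $e_2$ maintient l'adjacence, de sorte que $V_r(G')_v = V_r(G)_v$ en chaque sommet $v$. Le graphe $G'$ demeure connexe, généralisé et de même ensemble de sommets, et l'on itère. La principale difficulté technique réside dans la vérification, au niveau des arêtes orientées, que le retrait d'une arête dont les deux orientations appartiennent à des cycles distincts de $\tau$ fusionne \emph{exactement} ces deux cycles (diminuant $F$ d'une unité) et préserve la complétude\,: il s'agit du calcul standard de recollement du système de rotation aux deux extrémités, qu'il faut mener soigneusement, ainsi que de l'exclusion rigoureuse des configurations dégénérées de longueur $2$ à l'aide de l'hypothèse $S \ge 3$.
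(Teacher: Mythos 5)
Your proposal is correct, and the underlying reduction is the same as the paper's: each short walk is destroyed by deleting one well-chosen edge (the monogon loop for a length-$1$ walk, one of the two parallel edges or of the two loops at a common vertex for a length-$2$ walk), which drops $A$ and $F$ by one each -- hence preserves $\chi = S - A + F$ and $\gamma$ -- leaves every reduced valence unchanged, merges the short walk into the complete walk, and one inducts on $A$. Where you genuinely differ is the justification. The paper first embeds $(G,P)$ cellularly in $\Sigma_{\gamma}$ via the fundamental theorem (\cref{thm-ThmFond}) and performs the deletions geometrically: the length-$1$ walk bounds a monogon face that is contracted onto its vertex, the length-$2$ walk bounds a bigon collapsed by homotoping one edge onto the other, and the preservation of the walks, of cellularity and hence of the genus is read off the surface picture. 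You instead stay purely combinatorial: you characterize the short walks through $\tau$ and invoke the rotation-system fact that deleting an edge whose two orientations lie in distinct $\tau$-orbits merges exactly those two orbits and touches no other orbit. Your route is more self-contained (no appeal to Youngs' embedding theorem), at the price of the orbit computation you defer as ``standard''. That deferral is not a gap, but since it is the crux of your argument you should write it out; it takes a few lines: writing $\tau(a) = x$, $\tau(x) = b$, $\tau(c) = \bar{x}$, $\tau(\bar{x}) = d$, the deletion of $x$ and $\bar{x}$ from the cyclic orders at the two endpoints yields $\tau'(a) = d$, $\tau'(c) = b$, and $\tau' = \tau$ on every other oriented edge, so the orbits of $x$ and $\bar{x}$ concatenate into a single orbit (with the obvious degeneration when the orbit of $x$ is the singleton $\{x\}$) while all other orbits persist. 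Completeness of the merged walk then follows exactly as you observe: since orbits partition the oriented edges and the old MC covered every unoriented edge, the merged walk still covers every surviving edge. With that computation included, your proof is complete.
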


\begin{proof}
Plongeons $(G,P)$ cellulairement dans la surface $\Sigma$ de genre $g = \gamma(G,P)$.

Toute marche à gauche de longueur $1$ de $(G,P)$ est un lacet basé en $v \in G$ qui délimite un disque topologique dans $\Sigma$\,: en contractant la lacet à travers le disque vers le sommet $v$, nous obtenons un nouveau graphe $G'' \subset \Sigma$ qui a exactement les mêmes sommets que $G$. Puisque le lacet contracté ne participait pas à la valence réduite $V_r(G)_v$, nous avons $V_r(G'')_p = V_r(G)_p$ pour tout $p \in \mathcal{S}(G) = \mathcal{S}(G'')$. Les marches à gauche de la polarisation $P''$ induite par $\Sigma$ sur $G''$ sont les mêmes que celles de $(G,P)$, sauf (i) pour la marche de longueur $1$ qui a été contractée et qui n'existe plus dans $(G'',P'')$ et (ii) pour la marche complète de $(G,P)$ qui se voit raccourcie d'une arête dans $(G'', P'')$ et qui est complète pour $(G'', P'')$. Il est clair que $G''$ est plongé cellulairement, donc $\gamma(G'', P'') = g$ \,; alternativement, cela résulte des relations $S(G'') = S(G)$, $A(G'') = A(G)-1$ et $F(G'', P'') = F(G,P) - 1$. Par récurrence, nous nous réduisons à un graphe $(G'',P'')$ qui n'a pas de marche à gauche de longueur $1$. 

Toute marche à gauche de longueur $2$ est une paire d'arêtes $(e,f)$ reliant des sommets $v, w \in G''$ et qui délimitent un disque topologique $D$ (un bigone) dans $\Sigma$. Afin d'être précis, supposons que le bord orienté de $D$ est $\partial D = f + \bar{e}$. Nous effectuons alors l'opération inverse à l'opération de dédoublement\,: via une homotopie dans ce disque relative aux sommets $v$ et $w$, nous «\,contractons\,» le disque et homotopons ainsi $f$ sur $e$. Le résultat est un graphe $G' \subset \Sigma$ qui ne diffère de $G''$ que par le retrait de $f$. Compte tenu de $e$, l'arête $f$ ne participait pas aux valences réduites $V_r(G'')_v$ et $V_r(G'')_w$, donc nous avons $V_r(G')_p = V_r(G'')_p$ pour tout $p \in \mathcal{S}(G) = \mathcal{S}(G')$. Les marches à gauche de la polarisation $P''$ induite par $\Sigma$ sur $G''$ sont les mêmes que celles de $(G'',P'')$, sauf (i) pour la marche $[f, \bar{e}]$ qui a été contractée et qui n'existe plus dans $(G',P')$ et (ii) pour la marche complète de $(G'',P'')$ qui, au lieu de passer par $f$, passe par $e$ dans $(G', P')$ et est donc encore complète. Il est clair que $G'$ est plongé cellulairement, donc $\gamma(G', P') = g$ \,; alternativement, cela résulte des relations $S(G') = S(G'')$, $A(G') = A(G'')-1$ et $F(G', P') = F(G'',P'') - 1$. Par récurrence, nous nous réduisons à un graphe $(G',P')$ qui n'a pas de marche à gauche de longueur $1$ ou $2$, bref qui satisfait la condition (C).
\end{proof}

Ainsi, il nous suffit de démontrer le \cref{Thm_bornes} sous l'hypothèse additionnelle que $G$ satisfait la condition (C), ce qui nous permet d'avoir recours aux inégalités structurelles de la \cref{sec-ineg_struc}.

\bigskip

%%%%%%

\subsection{\Cref{Thm_bornes}}\label{subsec-thm-bornes} 
La démonstration est scindée en plusieurs propositions.
    \begin{prop}  $V_c(0) = 3$ et ce supremum n'est réalisé par aucun graphe.
    \end{prop}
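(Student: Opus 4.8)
Le plan est de démontrer séparément les deux moitiés de l'énoncé\,: d'abord que $V_r(G) < 3$ strictement pour tout graphe généralisé $G$ plongé cellulairement dans $\Sigma_0$ et possédant une marche complète (d'où $V_c(0) \le 3$ sans graphe maximiseur), puis que la valeur $3$ est néanmoins approchée, de sorte que le supremum vaut bien $3$.

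Pour la borne supérieure, je distinguerais selon le nombre de sommets $S$. Si $S = 1$, le sous-graphe réduit d'un monographe ne contient aucune arête, donc $V_r = 0$. Si $S = 2$, l'inégalité $(\clubsuit_r)$ donne déjà $V_r \le S - 1 = 1 < 3$. Si $S \ge 3$, j'invoquerais d'abord la procédure de réduction du \cref{lem-reduction} afin de remplacer $(G,P)$ par un graphe polarisé satisfaisant la condition (C), de mêmes sommets, de mêmes valences réduites et de même genre\,; comme le plongement est cellulaire dans $\Sigma_0$, on a $\gamma = 0$. L'inégalité $(\diamondsuit_r)$ s'écrit alors $V_r \le 3 + (\pi(S) - 4)/S \le 3 - 3/S < 3$, puisque $\pi(S) \le 1$. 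La réunion des trois cas montre $V_r < 3$ pour tout $G$ admissible, d'où $V_c(0) \le 3$ et l'absence de graphe réalisant le supremum.

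Pour la borne inférieure, j'exhiberais une famille saturant asymptotiquement la borne, à savoir les \emph{cactus triangulaires}\,: la chaîne $T_k$ de $k$ triangles où deux triangles consécutifs partagent exactement un sommet. De façon équivalente, $T_k$ est la somme connexe itérée de $k$ triangles au sens de la \cref{sec-operations}, obtenue en appliquant répétitivement l'opération de somme connexe à des copies du triangle (qui est un graphe ordinaire planaire possédant une marche complète, l'une ou l'autre de ses deux faces). Chaque $T_k$ est un graphe ordinaire à $S = 2k+1$ sommets et $A = 3k$ arêtes, planaire, donc plongeable cellulairement dans $\Sigma_0$\,; la face non bornée de son plongement planaire est une unique orbite de marche à gauche qui parcourt chaque arête (aucune arête n'est un isthme et chaque arête borde la région non bornée d'un seul côté), de sorte que $T_k$ possède une marche complète, les $k$ faces triangulaires en étant les marches non complètes. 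Comme $T_k$ est ordinaire, $V_r(T_k) = V(T_k) = 2A/S = 3 - 3/(2k+1)$, ce qui se déduit aussi par récurrence de la formule de somme connexe $(\#)$ en partant de $V_r = 2$ pour un seul triangle. Lorsque $k \to \infty$, cette quantité tend vers $3$, d'où $V_c(0) \ge 3$ et finalement $V_c(0) = 3$.

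Le point demandant le plus de soin est l'affirmation selon laquelle la face non bornée du cactus planaire constitue une unique marche à gauche complète\,: je la justifierais par le fait classique que, dans un plongement planaire cellulaire connexe, chaque face correspond à une unique orbite de la marche à gauche, et que la marche bordant la face non bornée d'un graphe planaire sans isthme parcourt chaque arête exactement une fois\,; cette marche complète est d'ailleurs produite directement et de façon transparente par l'opération de somme connexe de la \cref{sec-operations}, qui garantit que la somme connexe de graphes à marche complète en possède encore une. Tout le reste se ramène à une substitution directe dans les inégalités structurelles de la \cref{sec-ineg_struc}.
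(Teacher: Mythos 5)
Your proposal is correct and essentially reproduces the paper's argument: the upper bound follows from the structural inequality $(\diamondsuit_r)$ with $\gamma = 0$ (the paper applies $(\diamondsuit)$ in exactly the same way, though your explicit handling of the cases $S = 1, 2$, where \cref{lem-reduction} does not apply, is slightly more careful than the paper's one-line invocation), and the lower bound comes from an explicit planar family of $k$ triangles glued along cut vertices with $V_r = 6k/(2k+1) \to 3$. The only difference is cosmetic: the paper's family is the star of $n$ triangles sharing a single vertex (an $n$-loop monograph with each loop subdivided into three edges), whereas yours is a chain of triangles built by iterated connected sums — same vertex and edge counts, same valences, same conclusion.
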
 
\begin{proof}
D'abord, ($\diamondsuit$) implique $V_r \le V < 3$ pour tout graphe plongé cellulairement dans la sphère et qui a une MC, de sorte que $V_c(0) \le 3$ et $V(G)<3$ pour tout graphe $G$ plongé cellulairement dans la sphère. Ensuite, pour tout $n \in \mathbb{Z}_{> 0}$, considérons d'abord le monographe à $n$ lacets, puis subdivisons chaque lacet en trois arêtes afin d'obtenir un graphe ordinaire $G_n$. Il est clair que $G_n$ se plonge dans la sphère et qu'il possède une MC. Puisque $G_n$ a un sommet de valence $2n$ et $2n$ sommets de valence $2$, nous calculons $V(G_n) = 6n/(1+2n)$. En prenant la limite quand $n \to + \infty$, nous déduisons que $V_c(0) \ge 3$. Bref, $V_c(0) = 3$ et ce supremum n'est pas réalisé.
\end{proof}

\begin{prop} \label{prop-existence} Pour tout $g \ge 1$, il existe un graphe ordinaire $G$ plongé cellulairement dans $\Sigma_g$, qui possède une MC et qui a $V(G) = 4 \, \left(1 + \dfrac{1}{3g}\right)^{-1}$.
    \end{prop}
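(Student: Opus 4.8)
Le plan est de construire $G$ comme une somme connexe itérée de $g$ copies du graphe complet $K_4$, au sens de l'opération décrite à la \cref{sec-operations}. En récrivant la valeur visée sous la forme $4\left(1+\frac{1}{3g}\right)^{-1} = \frac{12g}{3g+1}$, on voit qu'il suffit de produire un graphe ordinaire plongé cellulairement dans $\Sigma_g$, muni d'une MC, et ayant exactement $S = 3g+1$ sommets et $A = 6g$ arêtes, car alors $V(G) = 2A/S = \frac{12g}{3g+1}$.

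Je commence par le cas de base $g = 1$. Le graphe $K_4$ est ordinaire, a $4$ sommets, $6$ arêtes et une valence constante égale à $3$, d'où $V(K_4) = 3 = \frac{12}{4}$. J'exhibe alors explicitement une polarisation de $K_4$ dont le traçage des marches à gauche produit exactement deux faces, l'une étant le triangle $1,2,3$\,; la caractéristique d'Euler $4 - 6 + 2 = 0$ confirme que ce plongement a bien lieu dans le tore $\Sigma_1$ et qu'il est cellulaire. Les trois arêtes du triangle sont parcourues une seule fois par cette première face\,; par conséquent, leur second côté ainsi que les deux côtés de chacune des trois arêtes restantes (l'étoile au sommet $4$) appartiennent tous à la seconde face. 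Cette seconde face emprunte donc les six arêtes de $K_4$ : c'est une marche complète. Ainsi $K_4$, muni de cette polarisation, se plonge cellulairement dans $\Sigma_1$ en possédant une MC, ce qui règle le cas $g=1$.

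Pour l'étape de récurrence, supposons construit un graphe ordinaire $G_{g-1}$ plongé cellulairement dans $\Sigma_{g-1}$, possédant une MC, avec $S(G_{g-1}) = 3(g-1)+1$ et $A(G_{g-1}) = 6(g-1)$. Je pose $G_g := G_{g-1} \# K_4$ au moyen de la somme connexe de la \cref{sec-operations}. Les propriétés de cette opération assurent que $G_g$ est ordinaire (puisque $G_{g-1}$ et $K_4$ le sont), qu'il est plongé cellulairement dans $\Sigma_{g-1} \# \Sigma_1 = \Sigma_g$ et qu'il possède une MC. Les formules de dénombrement de la \cref{sec-operations} donnent $S(G_g) = S(G_{g-1}) + S(K_4) - 1 = 3g+1$ et $A(G_g) = A(G_{g-1}) + A(K_4) = 6g$, d'où $V(G_g) = \frac{12g}{3g+1} = 4\left(1+\frac{1}{3g}\right)^{-1}$ (la même valeur découle aussi directement de l'identité $(\#)$). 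Le graphe $G := G_g$ satisfait alors l'affirmation.

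La principale difficulté se situe dans le cas de base : il faut s'assurer de l'existence d'une polarisation de $K_4$ réalisant un plongement cellulaire du tore possédant une face triangulaire (ce que l'on obtient en exhibant un système de rotations et en traçant ses faces). Une fois ce point établi, l'argument de complétude de la face complémentaire est immédiat, et la récurrence ne fait plus appel qu'aux propriétés déjà démontrées de la somme connexe et à une arithmétique élémentaire.
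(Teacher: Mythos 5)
Votre démonstration est correcte et emprunte une route réellement différente de celle de l'article. L'article ne procède pas par récurrence\,: il part du monographe optimal standard à $3g$ lacets de la \cref{Prop-HomGenusOptimal} (le polygone à $4g$ côtés muni des $g$ diagonales $d_j$), insère une copie $T'_j$ de chaque triangle $T_j$ à l'intérieur de celui-ci et relie les coins de $T'_j$ aux coins voisins de $T_j$\,; le graphe obtenu est un éclatement du sommet du monographe en une étoile à $3g$ pointes, donc il est cellulaire et possède une MC, avec un sommet de valence $3g$ et $3g$ sommets de valence $3$. Votre approche remplace cette construction globale par une récurrence fondée sur la somme connexe, avec $K_4 \subset \Sigma_1$ comme cas de base\,; elle est plus modulaire, puisqu'elle n'utilise que les propriétés déjà établies de la somme connexe (cellularité, MC, ordinarité, dénombrements), mais elle repose de façon cruciale sur le cas de base, et c'est là le seul point faible de votre texte\,: vous annoncez «\,exhiber explicitement\,» une polarisation de $K_4$ à deux faces dont l'une est le triangle $123$, mais vous ne l'écrivez pas. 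Une telle polarisation existe bel et bien et votre argument de complétude de la seconde face est correct\,; pour combler ce point, il suffit de prendre les ordres cycliques de voisins suivants\,: en $1$\,: $(3,2,4)$, en $2$\,: $(1,3,4)$, en $3$\,: $(2,1,4)$, en $4$\,: $(1,3,2)$. Le traçage des marches à gauche donne alors la face $[(1,2),(2,3),(3,1)]$ et une seconde face de longueur $9$ qui parcourt les six arêtes, d'où un plongement cellulaire dans le tore avec MC. Notons enfin que les deux approches produisent essentiellement le même objet\,: si vos sommes connexes sont effectuées aux sommets de valence maximale, le graphe obtenu est exactement celui de l'article, à savoir $g$ copies de $K_4$ partageant un sommet commun.
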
 
\begin{proof}
Considérons le monographe $G'$ à $3g$ lacets construit lors de la \cref{Prop-HomGenusOptimal} et décrit explicitement suite à celle-ci. Dans le polygone $P$, nous avons ainsi $g$ triangles $T_1, \ldots, T_g$, chacun délimité par les côtés $a_j, b_j$ et $d_j$. Plongeons dans l'intérieur de chaque $T_j$ une copie $T'_j$ du triangle $T_j$, obtenant ainsi trois arêtes $a'_j, b'_j$ et $d'_j$, puis relions par une arête chacun des trois sommets de $T'_j$ au sommet le plus proche de $T_j$. Nous définissons le graphe $G$ comme étant l'image par $q$ de l'union de tous les $T'_j$ et de toutes les arêtes reliant les $T'_j$ aux $T_j$. (Voir la \cref{fig-Prop-7.5-Fig}).

Le graphe $G$ ainsi obtenu est pour ainsi dire une perturbation de $G'$ et correspond à l'éclatement du sommet de $G'$ en une étoile à $3g$ pointes. Les pointes de cette étoile relient le sommet «\,central\,» à chacun des $3g$ autres sommets de $G$ que sont les images par $q$ des coins des $T'_j$. $G$ est donc clairement ordinaire, et il est cellulaire et il a une MC puisqu'il est un éclatement du sommet de $G'$. Puisque $G$ a un sommet de valence $3g$ et $3g$ sommets de valence $3$, nous calculons $V(G) = 4 \, \left(1 + \dfrac{1}{3g}\right)^{-1}$.
\end{proof}

\begin{figure}[H]
  \centering
  
\includegraphics[width=0.7\textwidth]{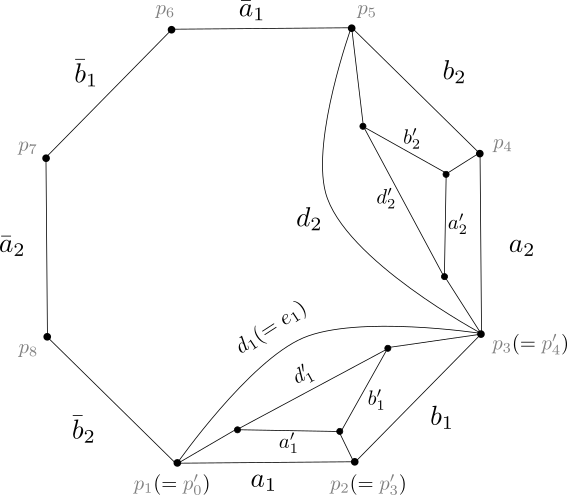}
\caption{Une illustration de la construction de $G$ dans la preuve de \cref{prop-existence} dans le cas où $g=2$, $j=2$}\label{fig-Prop-7.5-Fig}
\end{figure}

Ainsi, pour $g \ge 2$, nous connaissons un graphe ordinaire plongé cellulairement dans $\Sigma_g$ avec une MC tel que $V(G) > 3$. La \cref{fig-optimal1_tore} plus loin montre qu'il existe aussi un tel graphe pour $g=1$.

Rappelons que nous définissons la fonction
\begin{align*}
b(g) &:= 1 + \sqrt{1+6g}
\end{align*}

    \begin{prop} \label{Prop-b(g)} Pour tout $g \ge 1$, $V_c(g) \le b(g)$.
    \end{prop}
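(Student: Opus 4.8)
The plan is to bound $V_r(G)$ for an arbitrary generalized graph $G$ embedded cellularly in $\Sigma_g$ that admits an MC, and then to conclude by passing to the supremum. First I would invoke \cref{lem-reduction} to replace $(G,P)$ by a graph $(G',P')$ satisfying condition (C), with the same number of vertices, the same reduced valence at each vertex (hence the same $V_r$), and the same genus $\gamma = g$. This reduction is precisely what licenses the use of the structural inequalities of \cref{sec-ineg_struc}, all of which were derived under hypothesis (C).

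With (C) in force, two inequalities are available. The reduced-valence bound $(\clubsuit_r)$ gives $V_r \le S-1$, an increasing function of the number of vertices $S$. The Euler-characteristic bound $(\diamondsuit_r)$, together with $\pi(S)\le 1$ and $\gamma = g$, gives $V_r \le 3 + \frac{6g-3}{S}$, a decreasing function of $S$ (for $g \ge 1$ the numerator is positive). Thus $V_r \le \min\{\, S-1,\ 3 + (6g-3)/S \,\}$.

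The key observation is elementary: the minimum of an increasing and a decreasing function of $S$ is maximized at their crossing point. I would set $S-1 = 3 + (6g-3)/S$, which rearranges to $(S-1)(S-3) = 6g$, i.e. $(S-2)^2 = 6g+1$, so the crossing occurs at $S^\ast = 2 + \sqrt{6g+1}$ with common value $S^\ast - 1 = 1 + \sqrt{6g+1} = b(g)$. For $S \le S^\ast$ the increasing bound gives $V_r \le S-1 \le S^\ast - 1 = b(g)$; for $S \ge S^\ast$ the decreasing bound gives $V_r \le 3 + (6g-3)/S \le 3 + (6g-3)/S^\ast = b(g)$. In either case $V_r \le b(g)$. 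Since this holds for every qualifying $G$ (the corner cases $S \le 2$, which lie outside the scope of \cref{lem-reduction}, satisfy $V_r \le S-1 \le 1 < b(g)$ directly), taking the supremum over all such $G$ yields $V_c(g) \le b(g)$.

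I expect no serious obstacle here: the substance has already been absorbed into the two structural inequalities under (C) and into the reduction lemma, so what remains is a one-line min–max computation. The only mild subtlety is that $S$ ranges over positive integers whereas the crossing is computed over real $S$; since the bound $\min\{\, S-1,\ 3+(6g-3)/S\,\} \le b(g)$ holds for all real $S>0$, integrality can only improve it, so this causes no difficulty.
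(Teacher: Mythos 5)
Your proof is correct and takes essentially the same route as the paper: reduction to condition (C) via \cref{lem-reduction}, then combination of the two structural inequalities $(\clubsuit_r)$ and $(\diamondsuit_r)$ with $\pi(S)\le 1$ and $\gamma = g$. The only difference is cosmetic — the paper substitutes $S \ge V_r+1$ into $(V_r-3)S \le 6g-3$ (after assuming $V_r>3$) to get the quadratic $V_r^2 - 2V_r - 6g \le 0$ whose largest root is $b(g)$, whereas you maximize $\min\bigl\{S-1,\ 3+(6g-3)/S\bigr\}$ over $S$ at the crossing point $(S-2)^2 = 6g+1$; this is the same quadratic in dual form, and your packaging has the minor merit of avoiding the case split on $V_r \le 3$.
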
 
\begin{proof}
Soit $G$ un graphe généralisé plongé cellulairement dans $\Sigma_g$ et qui possède une MC\,; il suffit de montrer que $V := V(G) \le b(g)$. Compte tenu du commentaire précédant l'actuelle Proposition, nous pouvons supposer $V_r > 3$. Ainsi, du fait que $V_r-3 > 0$, ($\diamondsuit_r$) et ($\clubsuit_r$) impliquent
\[   6\gamma - 3 \ge     \left( V_r + 1 \right) \left( V_r - 3 \right) \; \Leftrightarrow \; 0 \ge V_r^2 - 2V_r - 6 \gamma \, . \]
Puisque $b(\gamma)$ est la plus grande racine de ce dernier polynome quadratique, nous obtenons $V_r \le b(\gamma)$. Puisque $G$ est plongé cellulairement,  $\gamma = g$.
\end{proof}

    \begin{prop}\label{Prop-Vc-realise}  Pour tout $g \ge 1$, le supremum $V_c(g)$ est réalisé.
    \end{prop}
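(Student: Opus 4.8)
Le plan est d'exploiter l'inégalité structurelle $(\diamondsuit_r)$ pour montrer que le supremum ne peut être approché que par un ensemble fini de valeurs, ce qui le rend automatiquement atteint. La \cref{Prop-b(g)} assure déjà que $V_c(g) \le b(g) < +\infty$, mais la seule finitude ne suffit pas : l'ingrédient décisif est que $V_c(g)$ dépasse \emph{strictement} $3$. En effet, la \cref{prop-existence} (pour $g \ge 2$) et l'exemple explicite sur le tore (pour $g=1$) fournissent un graphe ordinaire $G$ plongé cellulairement dans $\Sigma_g$ avec une MC et vérifiant $V(G) = V_r(G) > 3$, d'où $V_c(g) \ge V_r(G) > 3$. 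Je poserais alors $\epsilon := (V_c(g)-3)/2 > 0$.

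Ensuite, je bornerais uniformément le nombre de sommets des graphes quasi-maximisants. Soit $G$ un graphe généralisé plongé cellulairement dans $\Sigma_g$, muni d'une MC, avec $V_r(G) > 3 + \epsilon$. Comme $(\clubsuit_r)$ donne $S(G) \ge V_r(G) + 1 > 4$, le graphe a au moins trois sommets et la \cref{lem-reduction} s'applique : il existe un graphe polarisé $(G',P')$ satisfaisant la condition (C), possédant une MC, avec $\gamma(G',P') = \gamma(G,P) = g$ (par cellularité) et ayant les mêmes sommets et les mêmes valences réduites que $G$, de sorte que $S(G') = S(G)$ et $V_r(G') = V_r(G)$. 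Puisque $(G',P')$ satisfait (C), l'inégalité $(\diamondsuit_r)$ s'applique et donne
\[ 3 + \epsilon < V_r(G') \le 3 + \dfrac{6g - 4 + \pi(S)}{S} \le 3 + \dfrac{6g-3}{S}, \]
d'où $S(G) = S(G') < (6g-3)/\epsilon =: S_{\max}$.

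Enfin, je conclurais par finitude. La valence réduite ne dépend que du couple $(S, A_r)$ via $V_r = 2A_r/S$, et tout sous-graphe réduit étant ordinaire, on a $A_r \le \binom{S}{2}$. Avec $S < S_{\max}$, il n'existe donc qu'un nombre fini de couples $(S, A_r)$ possibles, de sorte que l'ensemble $\{\, V_r(G) : V_r(G) > 3 + \epsilon \,\}$ est fini. Puisque $V_c(g) = 3 + 2\epsilon > 3 + \epsilon$ est le supremum de toutes les valeurs $V_r(G)$ et que les graphes avec $V_r(G) \le 3 + \epsilon$ ne peuvent contribuer à ce supremum, ce dernier coïncide avec le maximum de cet ensemble fini : il existe donc un graphe $G^*$ réalisant $V_r(G^*) = V_c(g)$.

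L'obstacle principal réside dans la borne uniforme sur $S$ : elle dépend crucialement du gap $\epsilon > 0$ (donc de l'inégalité stricte $V_c(g) > 3$) et de la validité de $(\diamondsuit_r)$, laquelle requiert la condition (C), d'où le recours essentiel à la \cref{lem-reduction} pour se ramener, à genre et valences réduites inchangés, à un graphe satisfaisant (C). Le maximiseur $G^*$ ainsi produit est a priori généralisé ; pour en déduire l'énoncé plus fort d'un graphe \emph{$g$-optimal} au sens $V(G^*) = V_r(G^*) = V_c(g)$ (valence totale), il resterait à remplacer $G^*$ par un graphe ordinaire de mêmes invariants au moyen des opérations de la \cref{sec-operations}, raffinement que je traiterais dans un second temps.
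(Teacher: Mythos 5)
Votre démonstration est correcte et suit essentiellement la même approche que celle de l'article\,: un seuil strictement supérieur à $3$ (fourni par la \cref{prop-existence}), l'inégalité $(\diamondsuit_r)$ — rendue applicable par la réduction à la condition (C) — pour borner uniformément $S$, puis la finitude des couples $(S, A_r)$ pour conclure que le supremum est atteint. Les seules différences sont cosmétiques\,: vous bornez $A_r$ par $\binom{S}{2}$ là où l'article invoque $(\spadesuit_r)$ et la \cref{Prop-b(g)}, et vous explicitez le recours au \cref{lem-reduction} que l'article laisse implicite.
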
 
\begin{proof}
Soit $3 < V_* < V_c(g)$ et considérons l'ensemble $X$ des graphes (généralisés, cellulaires, avec MC) $G$ qui satisfont $V_r(G) \ge V_*$. Il résulte alors de ($\diamondsuit_r$) que $S(G) \le S_* := (6g - 3)/(V_* - 3)$. Par cette inégalité, la \cref{Prop-b(g)} et ($\spadesuit_r$), il résulte que $A_r(G) \le A_* := S_* b(g)/2$. Ainsi, l'image de l'application $X \to \mathbb{N} \times \mathbb{N} : G \mapsto (S(G), A_r(G))$ a cardinalité finie, et donc aussi l'image de l'application $X \to \mathbb{Q} : G \mapsto V_r(G)$. Ainsi, $V_c(g)$ est réalisé parmi les éléments de $X$.
\end{proof}

Pour le prochain résultat, rappelons que $\lceil x \rceil_0$ et $\lceil x \rceil_1$ désignent respectivement les plus petits entiers pair et impair plus grands ou égaux à $x$ et que $\lfloor x \rfloor_0$ et $\lfloor x \rfloor_1$ désignent respectivement les plus petits entiers pair et impair plus petits ou égaux à $x$. Rappelons que nous définissons
\begin{align*}
b_r(g) &:= \mathrm{max} \, \left\{  \, \lfloor{S_0(g)} \rfloor_0 - 1 \, , \, \lfloor{S_1(g)} \rfloor_1 - 1\, , \, 3 + \dfrac{6g-4}{\lceil S_0(g) \rceil_0} \, , \,  3 + \dfrac{6g-3}{\lceil S_1(g) \rceil_1} \right\},
\end{align*}
où $S_j(g) := 2 + \sqrt{6g + j}$ pour $j = 0, \, 1$.

 \begin{prop} \label{Prop-br(g)} Pour tout $g \ge 1$, $V_c(g) \le b_r(g) \le b(g) $ 
    \end{prop}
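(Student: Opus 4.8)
Le plan est de démontrer séparément les deux inégalités $V_c(g) \le b_r(g)$ et $b_r(g) \le b(g)$ : la seconde est purement algébrique, tandis que la première combine les inégalités structurelles de la \cref{sec-ineg_struc} à une optimisation discrète sur le nombre de sommets $S$. Pour $V_c(g) \le b_r(g)$, je fixerais un graphe généralisé $G$ plongé cellulairement dans $\Sigma_g$ avec une MC (de sorte que $\gamma = g$) et montrerais que $V_r(G) \le b_r(g)$ ; comme $V_c(g)$ est le supremum de tels $V_r(G)$, la conclusion suivra. Les cas $S \le 2$ sont immédiats (un monographe n'a aucune arête réduite, d'où $V_r = 0$, et pour $S=2$ on a $V_r = A_r \le 1 < b_r(g)$), si bien que l'on peut supposer $S \ge 3$ et invoquer la \cref{lem-reduction} afin de se ramener, sans modifier $V_r$ ni $\gamma = g$, à un graphe satisfaisant la condition (C).

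On dispose alors des deux bornes $(\clubsuit_r)$ : $V_r \le S - 1$ et $(\diamondsuit_r)$ : $V_r \le 3 + (6g - 4 + \pi(S))/S$, d'où $V_r \le \min\{ S - 1, \, 3 + (6g - 4 + \pi(S))/S \}$. La première borne croît en $S$ et la seconde décroît ; leur croisement s'obtient en résolvant $S - 1 = 3 + (6g - 4 + \pi(S))/S$, ce qui équivaut à $(S-2)^2 = 6g + \pi(S)$, soit $S = S_{\pi(S)}(g)$. Je scinderais ensuite selon la parité de $S$. Pour $S$ pair ($\pi(S) = 0$), l'inégalité $(S-2)^2 \le 6g$ équivaut à $S - 1 \le 3 + (6g-4)/S$ : lorsque $S \le S_0(g)$ c'est donc $(\clubsuit_r)$ qui domine, d'où $V_r \le S - 1 \le \lfloor S_0(g)\rfloor_0 - 1$, et lorsque $S \ge S_0(g)$ c'est $(\diamondsuit_r)$ qui domine, d'où $V_r \le 3 + (6g-4)/S \le 3 + (6g-4)/\lceil S_0(g)\rceil_0$. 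Le cas $S$ impair est identique en remplaçant $6g$ par $6g+1$, ce qui fait apparaître les deux termes restants. Dans tous les cas $V_r$ est majoré par l'un des quatre termes définissant $b_r(g)$, donc $V_r \le b_r(g)$.

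Pour l'inégalité $b_r(g) \le b(g)$, je vérifierais que chacun des quatre termes est $\le 1 + \sqrt{1+6g}$. Les deux premiers sont immédiats : $\lfloor S_0(g)\rfloor_0 - 1 \le S_0(g) - 1 = 1 + \sqrt{6g} \le b(g)$ et $\lfloor S_1(g)\rfloor_1 - 1 \le S_1(g) - 1 = 1 + \sqrt{6g+1} = b(g)$. Pour les deux autres, l'ingrédient clé est l'identité $\frac{u^2 - 4}{u + 2} = u - 2$ : en posant $u = \sqrt{6g}$ et en minorant $\lceil S_0(g)\rceil_0 \ge S_0(g) = 2 + u$, on obtient $3 + \frac{6g-4}{\lceil S_0(g)\rceil_0} \le 3 + \frac{u^2 - 4}{u + 2} = 1 + u = 1 + \sqrt{6g} \le b(g)$ ; de même, avec $w = \sqrt{6g+1}$ et $\lceil S_1(g)\rceil_1 \ge 2 + w$, on a $3 + \frac{6g-3}{\lceil S_1(g)\rceil_1} \le 3 + \frac{w^2 - 4}{w + 2} = 1 + w = b(g)$.

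Le principal point délicat est la gestion soigneuse de l'optimisation discrète dans la première inégalité : il faut traduire fidèlement le croisement $(S-2)^2 = 6g + \pi(S)$ en l'affirmation que, de part et d'autre de ce seuil, c'est bien $(\clubsuit_r)$ puis $(\diamondsuit_r)$ qui constitue la borne active \emph{sur les entiers de la bonne parité}, et s'assurer que les petits cas $S \le 2$ ne contraignent pas le supremum. Le reste de la démonstration n'est qu'algèbre élémentaire.
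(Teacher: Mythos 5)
Your proof is correct and follows essentially the same route as the paper: both arguments bound $V_r$ by $\min\{S-1,\, 3+(6g-4+\pi(S))/S\}$ via $(\clubsuit_r)$ and $(\diamondsuit_r)$ after reducing to condition (C), then carry out the parity-split discrete optimization over $S$ around the crossing point $S_j(g)$ to land on the four terms of $b_r(g)$, and finally deduce $b_r(g) \le b(g)$ from the same algebra. Your only deviations --- treating the cases $S \le 2$ by hand, bounding the supremum termwise instead of invoking \cref{Prop-Vc-realise}, and writing out explicitly the verification that each of the four terms is at most $b(g)$ --- are refinements of bookkeeping rather than a different argument.
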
 
\begin{proof}
Il suffit de considérer les graphes dans l'ensemble $X$ des graphes généralisés, cellulaires dans $\Sigma_g$, avec MC et satisfaisant $V_r(G) > 3$. Posons $N(S) := 6g-4 + \pi(S)$ et $N(G) = N(S(G))$. Les inégalités ($\clubsuit_r$) et ($\diamondsuit_r$) impliquent $V_r(G) \le \mathrm{min} \, \{ S(G)-1 \, , \, 3 + N(G)/S(G) \}$. Cette inégalité et la \cref{Prop-Vc-realise} nous permettent de déduire que
\begin{align}
\notag  V_c(g) &= \mathrm{max}_{G \in X} \,   V_r(G) \\
\notag &\le  \mathrm{max}_{G \in X} \,  \mathrm{min} \, \{ S(G)-1 \, , \, 3 + N(G)/S(G) \} \\
\notag &= \mathrm{max}_{S} \,  \mathrm{min} \, \{ S-1 \, , \, 3 + N(S)/S \} \, .  
\end{align}

Pour $j = 0,1$, posons $N_j := 6g-4 + j$. Observons que sur l'ensemble des $S$ satisfaisant $\pi(S) = j$, les fonctions $S-1$ et $3 + N_j/S$ sont respectivement croissante et décroissante et coïncident pour $S_j := 2 + \sqrt{N_j+4} = 2 + \sqrt{6g + j}$. Ainsi, 
\[  \mathrm{min} \, \left\{ S-1 \, , \, 3 + \dfrac{N(S)}{S} \right\} = \begin{cases} S-1 &\mbox{ si $S$ satisfait $\pi(S) = j$ et $S \le \lfloor S_j \rfloor_j$}, \\ 3 + \dfrac{N(S)}{S} & \mbox{ si $S$ satisfait $\pi(S) = j$ et $S \ge \lceil S_j \rceil_j$ \, .} \end{cases} \]
En maximisant cette quantité sur les $S$, nous obtenons $b_r(g)$. Il découle de l'argumentaire que $b_r(g) \le S_1(g) - 1 = b(g)$.
\end{proof}

%%%%%

\subsection{\Cref{Lem_optimal}}\label{subsec-Lem_optimal} Nous divisons la démonstration en plusieurs propositions, une pour chaque genre $1 \le g \le 5$. Nous exhiberons un graphe optimal pour chacun de ces $g$, mais différentes représentations de ces graphes seront employées.

\begin{prop}
Soit $G$ un graphe qui est plongé dans une surface de genre $1$ et qui admet une marche à gauche complète. Alors $V(G) \leq 3 \frac{3}{7}$. De plus, il existe un graphe qui réalise l'égalité.
\end{prop}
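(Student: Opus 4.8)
Le plan est de borner la valence réduite $V_r(G)$, ce qui établit la borne $V_c(1) \le 24/7$ et, pour un graphe ordinaire (où $V = V_r$), la borne annoncée sur la valence. Je commence par me ramener au cas cellulaire\,: si le plongement de $G$ dans $\Sigma_1$ n'est pas cellulaire, alors $\gamma(G,P) = 0$ et le calcul $V_c(0) = 3$ entraîne $V_r(G) < 3 < 24/7$. Je peux donc supposer $G$ plongé cellulairement, de sorte que $\gamma = 1$, puis, grâce au \cref{lem-reduction}, supposer en outre que $G$ satisfait la condition (C), sans modifier $V_r$ ni le genre.

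Sous ces hypothèses, les inégalités structurelles ($\clubsuit_r$) et ($\diamondsuit_r$) fournissent
\[ V_r \le \min\left\{\, S - 1,\ 3 + \frac{2 + \pi(S)}{S} \,\right\}. \]
Une vérification arithmétique directe montre que ce majorant est $\le 24/7$ pour tout $S \neq 5$\,: pour $S \le 4$ on a $S - 1 \le 3 < 24/7$\,; pour $S$ pair au moins égal à $6$ on a $3 + 2/S \le 10/3 < 24/7$\,; et pour $S$ impair au moins égal à $7$ on a $3 + 3/S \le 24/7$, avec égalité si et seulement si $S = 7$.

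Le seul cas délicat est donc $S = 5$, où la borne ne donne que $V_r \le 18/5 > 24/7$. Comme $V_r = 2A_r/5$, la seule valeur située dans l'intervalle $(24/7,\, 18/5]$ est $18/5$, correspondant à $A_r = 9$\,; il suffit de l'exclure. Si $V_r = 18/5$, alors ($\diamondsuit$), appliquée cette fois à la valence \emph{totale}, donne $V \le 18/5 = V_r \le V$, d'où $V = V_r$ et donc $A = A_r = 9$\,: le graphe $G$ est ordinaire. Or l'unique graphe simple à $5$ sommets et $9$ arêtes est $K_5$ privé d'une arête, qui possède exactement deux sommets de degré impair. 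La relation $\chi(G,P) = S - A + F = 0$ donne $F = 4$, soit trois MNC\,; chacune étant de longueur $\ge 3$ par (C) et la MC étant de longueur $\ge A = 9$, la longueur totale $2A = 18$ force la MC à être de longueur exactement $9$. Une telle marche complète parcourt chaque arête une et une seule fois et est fermée\,: c'est un circuit eulérien, ce qui est impossible dans un graphe possédant des sommets de degré impair. Cette contradiction écarte $S = 5$ et achève la preuve de $V_r \le 24/7$.

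Il reste à établir l'optimalité. Pour cela, j'exhibe un graphe ordinaire explicite à $S = 7$ sommets et $A = 12$ arêtes, plongé cellulairement dans le tore (voir la \cref{fig-optimal1_tore}), et je vérifie par inspection qu'il possède une marche à gauche complète\,; on a alors $V = V_r = 2A/S = 24/7$. Un tel graphe peut aussi être construit à partir du monographe optimal standard de genre $1$ au moyen des opérations d'éclatement et d'arête parallèle de la \cref{sec-operations}. L'obstacle principal réside dans le cas $S = 5$\,: la borne structurelle générique n'y est pas atteinte, et il faut combiner le constat que le graphe est forcément ordinaire avec l'obstruction eulérienne\,; tout le reste se ramène à de l'arithmétique élémentaire et à une construction explicite.
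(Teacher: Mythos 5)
Votre démonstration est correcte et suit, dans ses grandes lignes, le même chemin que celle de l'article\,: réduction au cas cellulaire (le cas non cellulaire retombant sur $V_c(0)=3$), passage à la condition (C) via le \cref{lem-reduction}, encadrement de $V_r$ par ($\clubsuit_r$) et ($\diamondsuit_r$) qui règle tous les $S \neq 5$ par arithmétique élémentaire (seul $S=7$ pouvant donner l'égalité $24/7$), et enfin le même graphe à $7$ sommets et $12$ arêtes de la \cref{fig-optimal1_tore} pour la saturation. La seule divergence véritable --- et c'est le c\oe ur de la preuve --- est l'exclusion de $S=5$. L'article force $A=9$, en déduit que la MC est un cycle eulérien et que les trois MNC sont des triangles, puis réalise $G$ comme recollement de trois triangles disjoints le long de leurs sommets et montre, par une discussion des identifications possibles des coins, qu'un tel recollement ne peut être ordinaire, d'où $V_r \le 3\frac{1}{5}$, ce qui contredit l'ansatz. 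Vous court-circuitez cette discussion\,: de $V_r = 18/5$ et de ($\diamondsuit$) vous tirez $V = V_r$, donc $A = A_r = 9$ et $G$ ordinaire, donc $G \cong K_5$ privé d'une arête, lequel possède deux sommets de degré impair\,; la même comptabilité des longueurs ($9 + 3\times 3 = 2A$) force alors la MC à être un circuit eulérien, ce qu'exclut le théorème d'Euler--Hierholzer. Les deux arguments sont valides\,; le vôtre est plus court et plus élémentaire, et il est du reste dans l'esprit de l'article, qui emploie exactement cette obstruction de parité pour démontrer le \cref{thm-optimalite}(a). Ce que l'analyse par triangles de l'article apporte en contrepartie, c'est le point de vue structurel (les MNC vues comme faces triangulaires d'un recollement) qui sera systématisé dans la \cref{prop-sature}\,; votre argument exploite, lui, le fait que le graphe extrémal hypothétique est ici identifiable de manière unique, ce qui rend l'obstruction de parité immédiatement applicable.
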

\begin{proof}
Puisque $V(1) = \mathrm{max} \{ V_c(0), V_c(1)\}$ et $V_c(0) = 3$, il suffit d'établir $V_c(1) = 3 \frac{3}{7}$. Nous savons que $V_c(1) \le b_r(1) = \mathrm{max} \, \left\{ 2 \, , \, 3 \, , \, 3 \frac{1}{3} \, , \, 3 \frac{3}{5} \right\} = 3 \frac{3}{5}$. Nous affirmons que $V_c(1) < b_r(1)$, ce qui exige que nous investiguions davantage la structure de graphes maximaux hypothétiques.

Soit $G$ satisfaisant $V_r(G) \ge 3 \frac{3}{7}$. En combinant cet ansatz à ($\clubsuit_r$) et à ($\diamondsuit_r$) et du fait que $S$ est entier, nous obtenons $5 \le S(G) \le 7$. La possibilité $S=6$ est exclue, puisque l'inégalité  ($\diamondsuit_r$) impliquerait $V_r(G) \le 3 \frac{1}{3}$, à l'encontre de l'ansatz.  

Montrons qu'aucun graphe avec $S(G)=5$ ne respecte l'ansatz. Supposons au contraire qu'un tel graphe $G$ existe. L'inégalité ($\diamondsuit$), qui porte sur la valence totale, donne $V(G) \le 3 \frac{3}{5}$. Ceci et l'ansatz impliquent donc, via ($\spadesuit$), qu'un graphe $G$ avec $S=5$ sommets a précisément $A = 9$ arêtes. Donc $V(G) = 3 \frac{3}{5}$ et l'inégalité ($\diamondsuit$) est saturée. Par la formule d'Euler, nous voyons que $G$ a trois MNC de longueur $3$ et une MC qui a longueur $9$ et qui est donc un cycle eulérien. Ainsi $G$ est construit à partir de $3$ triangles disjoints $T_1$, $T_2$ et $T_3$ via certains recollements de leurs sommets. Puisque les $3$ triangles ont $9$ coins et $S(G)=5$, le recollement n'est pas trivial\,: nous pouvons supposer que $T_1$ et $T_2$ ont des coins identifiés. Si deux points de $T_1$ sont identifiés entre eux ou s'il y a deux paires de coins identifiés $(p,q), (p', q') \in T_1 \times T_2$, alors $G$ n'est pas un graphe ordinaire (il possède au moins une boucle ou une arête multiple)\,; nous avons alors forcément $A_r(G) \le 8$ et donc $V_r(G) \le 3 \frac{1}{5}$, ce qui va à l'encontre de l'ansatz. Donc, après recollement, les triangles $T_1$ et $T_2$ ne partagent qu'un coin en commun et contribuent donc précisément $5$ sommets à $G$, c'est-à-dire tous les sommets de $G$. Les trois sommets de $T_3$ sont donc forcément identifiés à des sommets de $T_1$ ou de $T_2$\,; encore une fois, ceci implique que $G$ n'est pas ordinaire et donc $V_r(G) \le 3 \frac{1}{5}$, à l'encontre de l'ansatz.

Ainsi, l'ansatz ne peut être respecté que par un graphe à $S=7$ sommets. Un raisonnement analogue à celui ci-dessus établit alors que $V(G) = 3 \frac{3}{7}$, que ($\diamondsuit$) est saturée et donc que $G$ est un quotient de $4$ triangles disjoints. La \cref{fig-optimal1_tore} exhibe un tel graphe ordinaire $G$. La \cref{fig-optimal2_tore} illustre une série d'opérations qui permettent d'obtenir ce graphe à partir du monographe homotopique optimal standard dans le tore\,: trois éclatements élémentaires, dédoublement de ces trois arêtes, puis subdivision d'une arête dans chaque paire obtenue.
\end{proof}
\begin{figure}[H]
\includegraphics[width=0.5\textwidth]{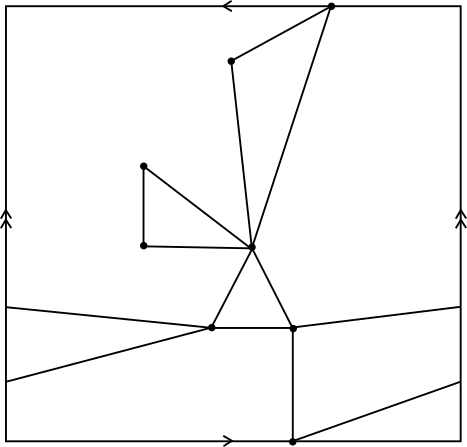}
\caption{Graphe dans le tore réalisant $V(G) = 3 \frac{3}{7}$.}
\label{fig-optimal1_tore}
\end{figure}
\begin{figure}[H]
\includegraphics[angle=270, width=\textwidth]{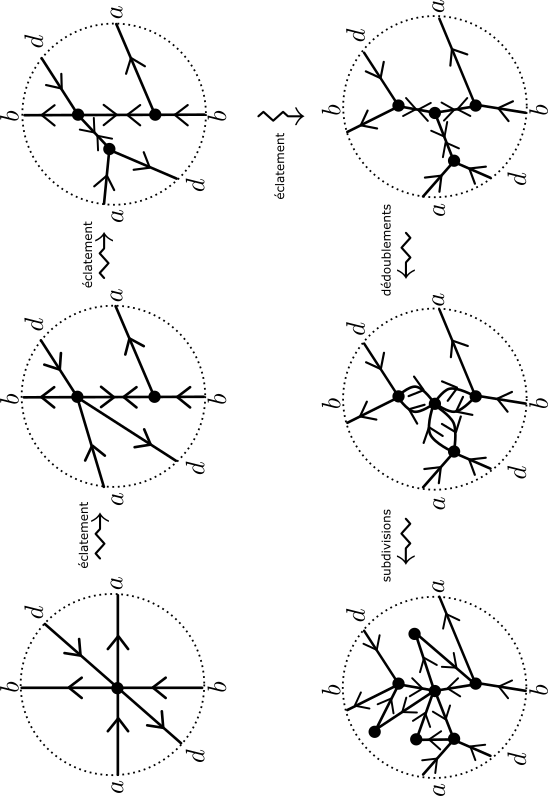}
\caption{Suite d'opérations locales produisant le graphe précédent à partir du monographe optimal standard dans le tore.}
\label{fig-optimal2_tore}
\end{figure}

\begin{prop}
Soit $G$ un graphe qui est plongé dans une surface de genre $2$ et qui admet une marche à gauche complète. Alors $V(G) \leq b_r(2) = 4\frac{1}{3}$ et l'égalité est possible.
\end{prop}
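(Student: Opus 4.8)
Le plan est de traiter séparément la majoration $V_c(2) \le 4\frac13$, immédiate, et la réalisation de l'égalité, qui reposera sur une construction explicite. Comme au genre $1$, j'observerais d'abord que $V(2) = \mathrm{max}\{V_c(0), V_c(1), V_c(2)\}$ et que la valeur $V_c(2)$ que je m'apprête à établir domine $V_c(0) = 3$ et $V_c(1) = 3\frac37$\,; il suffit donc de montrer $V_c(2) = 4\frac13$, la borne étant alors réalisée par un graphe ordinaire, pour lequel valence totale et valence réduite coïncident.

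Pour la majoration, j'appliquerais directement la \cref{Prop-br(g)}, qui fournit $V_c(2) \le b_r(2)$, puis j'évaluerais $b_r(2)$. Avec $6g = 12$, on a $S_0(2) = 2 + \sqrt{12}$ et $S_1(2) = 2 + \sqrt{13}$, d'où $\lfloor S_0(2)\rfloor_0 = 4$, $\lceil S_0(2)\rceil_0 = 6$, $\lfloor S_1(2)\rfloor_1 = 5$ et $\lceil S_1(2)\rceil_1 = 7$. Les quatre quantités dont on prend le maximum valent alors $3$, $4$, $3 + \tfrac{8}{6} = 4\frac13$ et $3 + \tfrac{9}{7} = 4\frac27$\,; ainsi $b_r(2) = 4\frac13$, le maximum étant atteint par la branche $3 + (6g-4)/\lceil S_0(g)\rceil_0$ associée à $S = 6$ sommets.

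Pour l'égalité, j'utiliserais le fait que la borne est saturée en $S = 6$ afin de cerner la structure d'un graphe optimal. Un graphe réalisant $V_r = 13/3$ avec $S = 6$ vérifie $A = 13$ par ($\spadesuit$)\,; la cellularité en genre $2$ et la formule d'Euler $S - A + F = -2$ donnent alors $F = 5$, et l'inégalité ($\heartsuit$) impose qu'outre la MC la dynamique comporte exactement quatre marches non complètes, chacune de longueur $\ge 3$ (typiquement quatre triangles, avec une MC de longueur $14$). Un graphe ordinaire à $6$ sommets et $13$ arêtes étant nécessairement $K_6$ privé de deux arêtes, je construirais un tel exemple explicitement\,: soit en exhibant une polarisation convenable au moyen d'une figure, soit en partant d'un graphe de base cellulaire avec MC dans $\Sigma_2$ (par exemple le monographe optimal standard à $6$ lacets) et en lui appliquant une suite appropriée d'opérations de la \cref{sec-operations} — éclatements élémentaires pour porter le nombre de sommets à $6$, puis ajouts d'arêtes parallèles (ou dédoublements) pour porter le nombre d'arêtes à $13$ — chacune préservant la cellularité, l'ordinarité et la présence d'une marche complète.

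La principale difficulté sera de produire une unique polarisation de $K_6$ privé de deux arêtes qui donne \emph{simultanément} le genre exactement $2$ et une marche à gauche complète, c'est-à-dire telle que les marches non complètes absorbent exactement les orientations résiduelles des arêtes. Une fois une telle polarisation en main, la vérification serait routinière\,: parcourir la marche à gauche pour confirmer qu'elle couvre les $13$ arêtes, puis contrôler $\gamma = 2$ par le décompte d'Euler $6 - 13 + 5 = -2$.
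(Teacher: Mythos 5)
Votre moitié «\,majoration\,» est correcte et suit la même route que l'article\,: $V_c(2) \le b_r(2)$ par la \cref{Prop-br(g)}, puis évaluation de $b_r(2) = 4\frac{1}{3}$. Vos quatre valeurs $3$, $4$, $4\frac{1}{3}$, $4\frac{2}{7}$ sont les bonnes (l'article écrit $2$ pour la première entrée, coquille sans conséquence puisque le maximum est identique), et le maximum est bien atteint par la branche $3 + (6g-4)/\lceil S_0(g) \rceil_0$ avec $S=6$. L'article donne en prime un argument direct par l'absurde (l'ansatz $V_r > 4\frac{1}{3}$ force $S = 6$, puis $26 < 2A \le 27$, impossible par parité de $2A$), mais celui-ci est surérogatoire.

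En revanche, la moitié «\,égalité\,» n'est pas démontrée dans votre proposition\,: vous décrivez exactement la stratégie de l'article — partir du monographe optimal standard à $3g = 6$ lacets dans $\Sigma_2$, éclater son sommet par des éclatements élémentaires, puis ajouter des arêtes parallèles — mais vous ne produisez ni la suite précise d'opérations ni la polarisation finale, et vous le reconnaissez vous-même («\,la principale difficulté sera de produire une unique polarisation...\,»). Or c'est là tout le contenu de cette moitié de l'énoncé\,: il faut vérifier que les coupures des éclatements peuvent être choisies en respectant la condition sortant-entrant, que les chaînes utilisées pour les arêtes parallèles sont bien parcourues dans les deux sens par la MC, et que le graphe obtenu est ordinaire. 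L'article règle tout cela par une figure explicite (éclatement du sommet en cinq arêtes radiales, puis deux arêtes parallèles formant deux triangles centraux, d'où $S = 6$, $A = 6 + 5 + 2 = 13$ et $V = 13/3$). Votre analyse structurelle préliminaire est d'ailleurs juste, et on peut même la renforcer\,: puisque $K_6$ privé de deux arêtes possède des sommets de degré impair, le théorème d'Euler--Hierholzer exclut que la MC soit un cycle eulérien, donc les quatre MNC sont \emph{nécessairement} des triangles et la MC a longueur $14$ — votre «\,typiquement\,» peut être supprimé. Mais tant que l'exemple n'est pas exhibé (par une figure ou par une liste d'ordres cycliques aux sommets avec vérification des marches), la preuve de la réalisation de l'égalité reste incomplète.
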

\begin{proof}
Nous avons déjà établi $V_c(0), V_c(1) \le 4 \frac{1}{3}$, tandis que $V_c(2) \le b_r(2) = \mathrm{max} \, \left\{ 2 \, , \, 4 \, , \, 4 \frac{1}{3} \, , \, 4 \frac{2}{7} \right\} = 4 \frac{1}{3}$. Ainsi, $V(2) = \mathrm{max}_{0 \le g \le 2} \, V_c(g) \le 4 \frac{1}{3}$.

Voici une démonstration directe de l'inégalité $V(2) \le 4 \frac{1}{3}$, qui est intéressante du fait qu'elle apporte un certain éclairage sur l'idée générale derrière la \cref{Prop-br(g)}.  Soit $G$ un graphe ordinaire plongé dans $\Sigma_2$ et qui admet une MC. Supposons, par contradiction, que $V_r := V_r(G) > 4 \frac{1}{3}$. Puisque $V \ge V_r > 3$ par hypothèse, les inégalités ($\diamondsuit$) et ($\clubsuit$) donnent
\begin{align*}
S &\leq \frac{6 \gamma -3}{V - 3}.
\end{align*}
En utilisant le fait que $\gamma \leq g$ lorsque $(G,P)$ se plonge dans $\Sigma_g$, nous obtenons ici
\begin{align*}\label{SharpGen2}
\tag{$\ast$} S &\leq \frac{9}{V - 3}. 
\end{align*}
Il résulte des inégalités précédentes que
\begin{align*}
5 \frac{1}{3} < \, &S < 6 \frac{3}{4},
\end{align*}
ce qui implique $S=6$, puisque $S \in \N$. L'inégalité (\ref{SharpGen2}) avec $S=6$ se réarrange alors pour donner
\begin{align*}
V &\leq \frac{27}{6},
\end{align*}
de sorte que $\frac{26}{6} < V \leq \frac{27}{6}$. Puisque $V(G) = \frac{1}{S} \sum_{p \in G} V(p)$, et $\sum_{p \in G} V(p) \in \N$, cela implique que $\sum_{p \in G} V(p) =27$, mais $\sum_{p \in G} V(p) = 2A$ doit être pair, d'où la contradiction recherchée.

Afin d'exhiber un exemple de graphe ordinaire plongé dans une surface de genre $2$ qui sature l'inégalité, nous suivons une stratégie similaire à celle employée dans le cas du genre $1$\,: nous considérons d'abord le monographe homotopique optimal standard $G_0$ pour $g=2$ (\cref{Fig-Genus2Monograph}) et nous explicitons la structure d'incidence de ses arêtes à son sommet (\cref{Fig-Genus2LocalPic}).
  \begin{figure}[H]
  \centering
  \begin{subfigure}{.5\textwidth}
  \centering
\includegraphics[width=\textwidth]{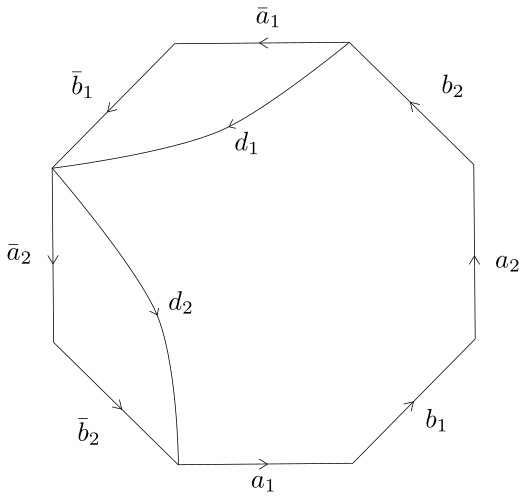}
\caption{La construction du monographe $G_0$}\label{Fig-Genus2Monograph}
\end{subfigure}%
\begin{subfigure}{.5\textwidth}
  \centering
\includegraphics[width=\textwidth]{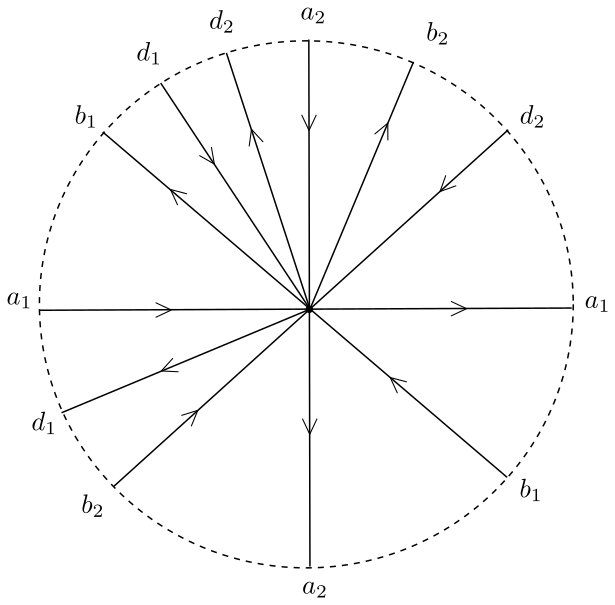}
\caption{Une image locale d'un voisinage du sommet dans  $\Sigma_2$}\label{Fig-Genus2LocalPic}
\end{subfigure}
\caption{}
\label{fig-optimal1_2tore}
\end{figure}
Le graphe ordinaire optimal $G$ est produit en remplaçant $G_0$ dans ce voisinage local par le plongement représenté dans la \cref{Fig-Genus2BlowUp}. Encore une fois, cet éclatement du sommet du monographe s'obtient d'une suite d'éclatements élémentaires (afin de produire les cinq arêtes radiales), puis de l'ajout de deux arêtes parallèles (afin de produire les deux triangles centraux).
\begin{figure}[H]
  \centering
\includegraphics[width=0.65\textwidth]{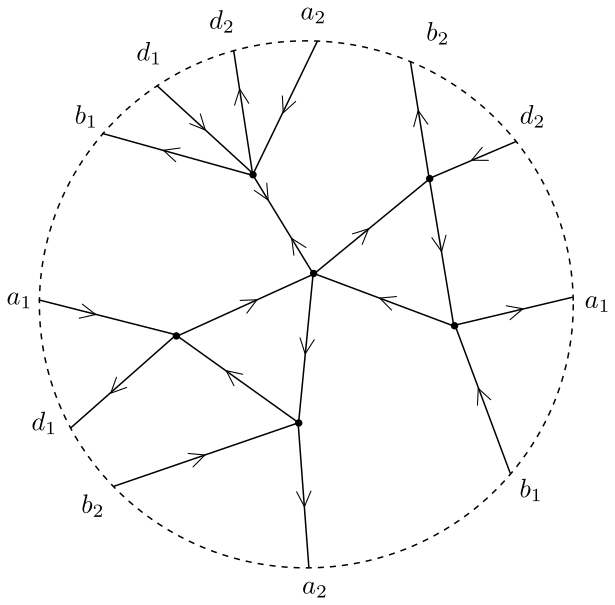}
\caption{L'éclatement du sommet de $G$. La marche à gauche complète sur $G'$ suit les directions indiquées.}\label{Fig-Genus2BlowUp}
\end{figure}
\end{proof}

\begin{prop}
Soit $G$ un graphe qui est plongé dans une surface de genre $3$ et qui admet une marche à gauche complète. Alors $V(G)  \leq b_r(3) = 5\frac{1}{7}$ et l'égalité est possible.
\end{prop}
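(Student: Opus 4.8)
Le plan est de séparer la borne supérieure, qui découle presque immédiatement des résultats déjà établis, de la réalisation de l'égalité, où se concentre toute la difficulté. Pour la majoration, j'invoquerais le théorème fondamental (\cref{thm-ThmFond}), qui donne $V(3)=\max_{0\le h\le 3}V_c(h)$, ainsi que les valeurs déjà obtenues $V_c(0)=3$, $V_c(1)=3\frac{3}{7}$ et $V_c(2)=4\frac{1}{3}$ ; comme la \cref{Prop-br(g)} fournit $V_c(3)\le b_r(3)$, il suffirait de calculer $b_r(3)$. À partir de $S_0(3)=2+\sqrt{18}$ et $S_1(3)=2+\sqrt{19}$, le maximum définissant $b_r(3)$ est atteint par le terme $3+(6\cdot 3-3)/\lceil S_1(3)\rceil_1=3+15/7=5\frac{1}{7}$, où $\lceil S_1(3)\rceil_1=7$, d'où $V(G)\le 5\frac{1}{7}$. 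On pourrait aussi, dans l'esprit de la preuve du cas $g=2$, donner un argument direct\,: supposer $V_r(G)>5\frac{1}{7}$, combiner ($\diamondsuit_r$) et ($\clubsuit_r$) pour forcer $S(G)=7$, puis conclure par la parité de $2A_r$.

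Avant de construire l'exemple, j'analyserais la structure qu'impose la saturation de la borne, car elle guide la construction. Les inégalités ($\clubsuit_r$) et ($\diamondsuit_r$), jointes à l'intégralité de $S$, contraignent tout graphe ordinaire $G$ avec $V(G)=36/7$ à avoir exactement $S=7$ sommets, donc $A=18$ arêtes. La formule d'Euler donne alors $F=7$ (une MC et six MNC), et la saturation de ($\heartsuit$) force la MC à être un cycle eulérien de longueur $18$ et chacune des six MNC à être un triangle. Le problème se réduit ainsi à exhiber un graphe ordinaire à $7$ sommets et $18$ arêtes, plongé cellulairement dans $\Sigma_3$, dont les marches à gauche présentent exactement cette structure.

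Pour la construction, je reprendrais la stratégie des cas $g=1$ et $g=2$ en partant du monographe optimal standard $G_0$ pour $g=3$, issu du polygone à $12$ côtés $a_1,b_1,\ldots,a_3,b_3,\bar a_1,\ldots,\bar b_3$ muni des $3$ diagonales $d_1,d_2,d_3$, de sorte que $G_0$ ait $9$ lacets et une unique marche complète. J'éclaterais ensuite son sommet unique par six éclatements élémentaires successifs (produisant six arêtes radiales et faisant passer le graphe à $7$ sommets et $15$ arêtes), puis j'ajouterais trois arêtes parallèles bien placées pour créer trois triangles centraux et atteindre $18$ arêtes. Les opérations de la \cref{sec-operations} assureraient que le graphe obtenu $G$ reste plongé cellulairement et possède une marche complète, tandis qu'un examen de la structure d'incidence au sommet éclaté, présenté dans une figure analogue à celle du cas $g=2$, permettrait de vérifier que $G$ est ordinaire et que $V(G)=2\cdot 18/7=5\frac{1}{7}$.

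Le principal obstacle sera ce dernier point\,: produire une polarisation explicite au sommet éclaté qui engendre \emph{simultanément} un cycle eulérien comme MC, six MNC triangulaires et un graphe \emph{ordinaire} (sans boucle ni arête multiple), tout en préservant la cellularité dans $\Sigma_3$. C'est la partie combinatoire délicate, directement analogue à l'éclatement illustré à la \cref{Fig-Genus2BlowUp}, et je m'attends à ce qu'elle repose sur une vérification soigneuse de la structure d'incidence locale plutôt que sur un argument général.
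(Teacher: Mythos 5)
La moitié «\,majoration\,» de votre proposition est correcte et coïncide avec la démarche de l'article\,: les valeurs déjà établies $V_c(0)$, $V_c(1)$, $V_c(2)$, la \cref{Prop-br(g)} pour $V_c(3) \le b_r(3)$, et le calcul $b_r(3) = \max\left\{5 \, , \, 4 \, , \, 4\frac{3}{4} \, , \, 5\frac{1}{7}\right\} = 5\frac{1}{7}$, où le terme dominant est bien celui que vous indiquez. (Votre variante directe aboutit aussi, et même sans argument de parité\,: si $V_r > 5\frac{1}{7}$, alors ($\clubsuit_r$) force $S \ge 7$, tandis que ($\diamondsuit_r$) avec $\gamma \le 3$ donne alors $V_r \le 3 + 15/7$, contradiction.) Votre analyse de la structure imposée par la saturation ($S=7$, $A=18$, MC eulérienne de longueur $18$, six MNC triangulaires) est également exacte\,; elle anticipe essentiellement la \cref{prop-sature} de la \cref{sec-thm-optimalite}.

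En revanche, la moitié «\,l'égalité est possible\,» n'est pas démontrée, et c'est là que réside toute la difficulté de l'énoncé. Vous proposez de partir du monographe optimal standard à $9$ lacets, de l'éclater en $7$ sommets par six éclatements élémentaires, puis d'ajouter trois arêtes parallèles\,; l'arithmétique est cohérente ($9+6+3 = 18$ arêtes, MC de longueur $18$, six triangles), mais l'existence d'un choix de coupures et d'arêtes parallèles ayant \emph{toutes} les propriétés requises n'est ni exhibée ni justifiée -- vous le signalez d'ailleurs vous-même comme «\,principal obstacle\,». Or cette vérification est le c\oe ur de la preuve\,: les six arêtes radiales et les trois arêtes parallèles occupent déjà $9$ des $21$ paires de sommets possibles, de sorte que les $18$ extrémités des $9$ lacets d'origine doivent se répartir de façon à occuper $9$ des $12$ paires restantes (aucune boucle, aucune arête double -- le graphe final contient $18$ des $21$ arêtes de $K_7$), tout en rendant toutes les valences paires (condition nécessaire au cycle eulérien par Euler--Hierholzer) et en respectant, à chaque éclatement, la condition «\,première arête sortante, dernière arête entrante\,» de la \cref{sec-operations} qui préserve la MC. Rien dans votre texte n'établit qu'une telle configuration existe. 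Ce n'est pas un hasard si l'article abandonne ici la méthode d'éclatement utilisée pour $g=1$ et $g=2$\,: il présente plutôt $\Sigma_3$ comme quotient d'un polygone à $24$ côtés décoré de six diagonales en demi-lune (\cref{fig-optimal_g3}), ce qui rend la cellularité, l'ordinarité et la structure des marches (six MNC triangulaires et une MC eulérienne) vérifiables par inspection directe\,; c'est aussi cette présentation polygonale qui se généralise ensuite à tous les $g = (S-1)(S-3)/6$ dans la \cref{sec-thm-optimalite}. Pour compléter votre preuve, il faudrait soit mener à bien l'éclatement explicite (avec une figure d'incidence locale à l'appui, analogue à la \cref{Fig-Genus2BlowUp}), soit adopter la présentation polygonale de l'article.
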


\begin{proof} Nous avons déjà établi que $V_c(0), V_c(1), V_c(2) \le 5 \frac{1}{7}$, tandis que $V_c(3) \le b_r(3) = \mathrm{max} \, \left\{ 5 \, , \, 4 \, , \, 4 \frac{3}{4} \, , \, 5 \frac{1}{7} \right\} = 5 \frac{1}{7}$. Ainsi, $V(3) = \mathrm{max}_{0 \le g \le 3} \, V_c(g) \le 5 \frac{1}{7}$.

La \cref{fig-optimal_g3} exhibe un graphe qui sature cette borne.  La surface $\Sigma_3$ y est présentée comme quotient d'un polygone à $24$ côtés, à savoir les segments $a_1, \ldots, \bar{a}_{12}$. (À vrai dire, à des fins de lisibilité, la figure présente un rectangle plutôt qu'un polygone à $24$ côtés, mais les côtés verticaux du rectangle sont des signes d'égalité entre les sommets correspondants\,; ces côtés ne font pas partie des $24$ côtés du polygone.) Après identifications de ces $24$ côtés entre eux, nous obtenons une surface orientée de genre $3$ dont la décomposition en CW-complexe comprend sept $0$-cellules, les douze $1$-cellules déterminées par les segments $a_k = \bar{a}_k$ et une $2$-cellule (donnée par l'intérieur du «\,rectangle\,»). Par la formule d'Euler, la surface est donc bien de genre $3$. Un graphe $G_0 \subset \Sigma_3$ à $7$ sommets et $12$ arêtes (les $a_1, \ldots, a_12$) est ainsi obtenu comme $1$-squelette du CW-complexe.

Le graphe $G$ est obtenu en ajoutant les six diagonales $d_1, \ldots, d_6$ à $G_0$. Il est clair que $G$ est plongé cellulairement, qu'il possède six MNC de longueur $3$ (délimitées par les $2$-cellules en demi-lune) et qu'il possède une MC (délimitée par la $2$-cellule du haut complémentaire aux six demi-lunes) qui est un cycle eulérien. $G$ est ordinaire\,: par inspection, aucune arête de $G$ ne lie un sommet à lui-même et aucune paire de sommets n'est liée par deux arêtes distinctes ou plus. Finalement, $V(G) = 2A/S = 36/7$, tel qu'annoncé.
\begin{figure}[H]
  \centering
\includegraphics[width=0.8\textwidth]{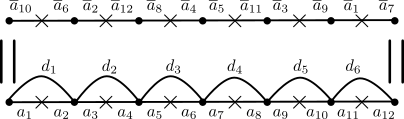}
\caption{Graphe optimal pour $g=3$. Les deux sommets de gauche sont identifiés ensemble, tout comme le sont les deux sommets de droite, d'où les signes d'égalité verticaux.}\label{fig-optimal_g3}
\end{figure}
\end{proof}

\begin{prop}\label{prop-optimal_g4}
Soit $G$ un graphe qui est plongé dans une surface de genre $4$ et qui admet une marche à gauche complète. Alors $V(G) \leq b(4) = 6$ et l'égalité est possible.
\end{prop}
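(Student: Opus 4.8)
Le plan se scinde en la borne supérieure et la réalisation de l'égalité. Pour la borne, il suffit de reprendre la stratégie des cas $g \le 3$\,: tout graphe plongé dans $\Sigma_4$ se plonge cellulairement dans $\Sigma_\gamma$ pour un certain $\gamma = \gamma(G,P) \le 4$ (théorème fondamental, \cref{thm-ThmFond}), de sorte que $V(G) \le V(4) = \max_{0 \le h \le 4} V_c(h)$. Les propositions précédentes de cette sous-section (cf. \cref{Lem_optimal}) donnent $V_c(h) < 6$ pour $h \le 3$, tandis que \cref{Prop-br(g)} fournit $V_c(4) \le b_r(4)$. Il reste à calculer $b_r(4)$\,: avec $S_0(4) = 2 + \sqrt{24}$ et $S_1(4) = 2 + \sqrt{25} = 7$, on trouve $\lceil S_0(4) \rceil_0 = 8$, $\lceil S_1(4) \rceil_1 = 7$ et donc $b_r(4) = \max\left\{ 5,\, 6,\, \tfrac{11}{2},\, 6 \right\} = 6 = b(4)$, la valeur $6$ étant atteinte par $\lfloor S_1(4) \rfloor_1 - 1 = 7 - 1$. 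D'où $V(G) \le 6$.

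Pour l'égalité, l'observation de départ est que $b(4) = 6 = 2 \cdot 21 / 7$ et que $7 = 2 + \sqrt{1 + 6 \cdot 4}$\,; le candidat naturel est donc le graphe complet $K_7$, qui est ordinaire et vérifie $V(K_7) = V_r(K_7) = 6$. Il suffit ainsi de munir $K_7$ d'une polarisation $P$ telle que $(K_7, P)$ admette une marche complète et $\gamma(K_7, P) = 4$. Par la formule d'Euler, $\gamma = 4$ équivaut à $F = 8$, et je montrerais d'abord que cette seule contrainte impose la structure des marches\,: la MC a longueur au moins $A = 21$, chaque MNC a longueur au moins $3$ (car $K_7$ est ordinaire, donc satisfait (C)), et les sept MNC occupent donc au moins $21$ des $42$ arêtes orientées. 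Comme il n'en reste que $21$ pour la MC, celle-ci est forcément un cycle eulérien de longueur exactement $21$ et les sept MNC sont des triangles partitionnant $\mathcal{A}(K_7)$\,; autrement dit, les MNC forment un système de triples de Steiner sur les sept sommets, soit un plan de Fano.

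La construction renverse cette analyse. Partant du plan de Fano --- les sept droites $\{i, i+1, i+3\}$ pour $i \in \Z/7$, ou toute autre présentation --- j'orienterais cycliquement chaque droite de façon à en faire un triangle orienté. Cela impose qu'en chaque sommet $v$, les deux arêtes de chacune des trois droites passant par $v$ soient consécutives dans l'ordre cyclique en $v$ (trois «\,blocs\,» de deux arêtes chacun). Il reste alors à choisir, en chaque sommet, l'agencement cyclique de ces trois blocs de manière à ce que les arêtes orientées restantes (les renversées des côtés de triangles) se recollent en une \emph{unique} face de longueur $21$. Je présenterais ce choix explicitement, soit par une table des rotations, soit par une figure représentant $\Sigma_4$ comme quotient d'un polygone dont les côtés et les diagonales réalisent les $21$ arêtes de $K_7$, à la manière de la \cref{fig-optimal_g3}, puis je vérifierais la structure annoncée des faces par un parcours direct de la dynamique $\tau$.

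L'obstacle principal est précisément ce dernier agencement des blocs. Un choix naïf échoue\,: la rotation invariante par translation sur $\Z/7$ referme le côté complémentaire en sept triangles additionnels (les mêmes droites parcourues en sens inverse, puisque $1 + 2 + 4 \equiv 3 + 5 + 6 \equiv 0 \pmod 7$), ce qui produit $14$ faces et le plongement torique usuel de $K_7$ (genre $1$) au lieu de $F = 8$\,; un agencement non invariant mal choisi crée quant à lui des triangles parasites portés par des triples qui ne sont pas des droites. Tout le contenu non trivial réside donc dans l'exhibition d'une rotation rendant eulérienne la marche complémentaire. Une fois une telle polarisation affichée, la conclusion est immédiate\,: la formule d'Euler donne $S = 7$, $A = 21$, $F = 8$, donc $\gamma = 4$, et la cellularité résulte de \cref{thm-ThmFond}.
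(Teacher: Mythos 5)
Votre démonstration de la borne supérieure est correcte et suit essentiellement la même route que l'article\,: $V(4) = \max_{0 \le h \le 4} V_c(h)$, les valeurs $V_c(h) < 6$ pour $h \le 3$ issues des propositions précédentes, et le calcul $b_r(4) = \max \left\{ 5 \, , \, 6 \, , \, \tfrac{11}{2} \, , \, 6 \right\} = 6 = b(4)$ coïncide avec celui du texte. Votre analyse de la structure d'un graphe saturant est elle aussi juste\,: un tel graphe est forcément $K_7$, sa MC est un cycle eulérien et ses sept MNC sont des triangles partitionnant $\mathcal{A}(K_7)$, donc un plan de Fano. C'est exactement le contenu de la \cref{prop-graphecomplet}, et l'article note d'ailleurs que la présente proposition est le cas $S=7$ du \cref{thm-optimalite}(b).

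La lacune est dans la seconde moitié de l'énoncé\,: «\,l'égalité est possible\,» est une affirmation d'existence, et vous ne l'établissez pas. Toute votre partie constructive est au conditionnel («\,j'orienterais\,», «\,je présenterais\,», «\,je vérifierais\,») et vous reconnaissez vous-même que «\,tout le contenu non trivial réside dans l'exhibition d'une rotation rendant eulérienne la marche complémentaire\,» --- rotation que vous n'exhibez jamais. L'argument de comptage ($F=8$ force la structure) ne fournit que des conditions nécessaires\,; il pourrait a priori n'exister aucun agencement des trois blocs en chaque sommet produisant une unique face de longueur $21$, et votre propre remarque sur l'échec du choix invariant par translation (qui redonne le plongement torique de $K_7$ à $14$ faces) montre précisément que l'existence n'a rien d'automatique. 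C'est cette donnée explicite que l'article fournit et que votre proposition laisse en suspens\,: la \cref{fig-optimal_g4} présente $\Sigma_4$ comme quotient d'un polygone à $28$ côtés décoré de sept diagonales, dont on vérifie par inspection que le $1$-squelette est ordinaire avec $S=7$ et $A=21$, que les sept demi-lunes bordent les MNC de longueur $3$ et que la face restante borde une MC eulérienne, d'où $F=8$, $\gamma = 4$ et la cellularité. Sans une telle table de rotation ou un tel polygone explicitement vérifié, la proposition n'est démontrée qu'à moitié.
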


\begin{proof} Nous avons déjà établi que $V_c(0), V_c(1), V_c(2), V_c(3) \le 6$, tandis que $V_c(4) \le b_r(4) = \mathrm{max} \, \left\{ 5 \, , \, 6 \, , \, 5 \frac{1}{2} \, , \, 6\right\} = 6$. Ainsi, $V(4) = \mathrm{max}_{0 \le g \le 4} \, V_c(g) \le 6$. Observons aussi que $b(4) = 6$.

La \cref{fig-optimal_g4} exhibe un graphe qui sature cette borne.  De nouveau, la surface $\Sigma_4$ est présentée comme quotient d'un polygone à $28$ côtés. (Encore une fois, les côtés gauche et droite du rectangle sont des signes d'égalité entre les sommets correspondants et ne font pas partie des $28$ côtés.) Après identifications, ces $28$ côtés déterminent bien une surface orientée de genre $4$, dont la décomposition en CW-complexe comprend sept $0$-cellules, les quatorze $1$-cellules déterminées par les segments $a_k = \bar{a}_k$ et une $2$-cellule (donnée par l'intérieur du «\,rectangle\,»).

Le graphe $G$ est obtenu en ajoutant les sept diagonales $d_1, \ldots, d_7$ aux quatorze arêtes $a_k$ formant le $1$-squelette du CW-complexe. Il est clair que $G$ est plongé cellulairement, qu'il possède sept MNC de longueur $3$ (délimitées par les $2$-cellules en demi-lune) et qu'il possède une MC (délimitée par la $2$-cellule du haut complémentaire aux sept demi-lunes) qui est un cycle eulérien. Encore une fois, nous voyons par inspection que $G$ est ordinaire. Finalement, $V(G) = 2A/S = 6$.
\begin{figure}[H]
  \centering
\includegraphics[width=0.8\textwidth]{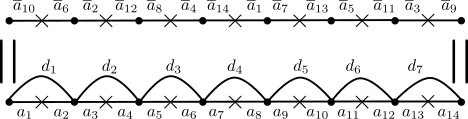}
\caption{Graphe optimal pour $g=4$.}\label{fig-optimal_g4}
\end{figure}
\end{proof}

\begin{prop}
Soit $G$ un graphe  qui est plongé dans une surface de genre $5$ et qui admet une marche à gauche complète. Alors $V(G) \leq b_r(4) = 6 \frac{1}{4}$ et l'égalité est possible.
\end{prop}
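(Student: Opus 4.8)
Le plan est de suivre la stratégie employée pour $g=3$ et $g=4$\,: établir d'abord la borne supérieure à partir des résultats déjà acquis, puis exhiber un graphe ordinaire explicite qui la sature. Pour la borne supérieure, nous commencerions par rassembler les valeurs déjà calculées $V_c(0) = 3$, $V_c(1) = 3\frac{3}{7}$, $V_c(2) = 4\frac{1}{3}$, $V_c(3) = 5\frac{1}{7}$ et $V_c(4) = 6$, qui sont toutes au plus $6\frac{1}{4}$. La \cref{Prop-br(g)} fournit quant à elle $V_c(5) \le b_r(5)$. Avec $S_0(5) = 2 + \sqrt{30}$ et $S_1(5) = 2 + \sqrt{31}$, on vérifie que le maximum définissant $b_r(5)$ est atteint par le terme $3 + (6\cdot 5 - 4)/\lceil S_0(5)\rceil_0 = 3 + 26/8 = 6\frac{1}{4}$, associé à $S = 8$ sommets. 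Puisque $G$ se plonge dans $\Sigma_5$, le théorème fondamental (sous la forme $V(5) = \mathrm{max}_{0 \le h \le 5} V_c(h)$) donnerait alors $V(G) \le V(5) \le 6\frac{1}{4}$.

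Pour la réalisation, nous présenterions $\Sigma_5$ comme quotient d'un polygone à $34$ côtés $a_1, \ldots, a_{17}, \bar a_1, \ldots, \bar a_{17}$, identifiés par paires $a_k \sim \bar a_k$ au moyen d'homéomorphismes renversant l'orientation, choisis de sorte que les $34$ coins se regroupent en exactement $8$ sommets. Le $1$-squelette $G_0$ obtenu possède $8$ sommets, $17$ arêtes et une seule $2$-cellule, donc par la formule d'Euler $8 - 17 + 1 = -8$, il est de genre $5$. Nous ajouterions ensuite $8$ diagonales $d_1, \ldots, d_8$ tracées dans la $2$-cellule, chacune découpant une demi-lune bordée par deux côtés consécutifs du polygone et une diagonale, c'est-à-dire une MNC de longueur $3$. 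Le graphe $G$ résultant aurait $8$ sommets et $25$ arêtes, d'où $V(G) = 2 \cdot 25/8 = 6\frac{1}{4}$, et serait plongé cellulairement dans $\Sigma_5$ avec $F = 9$ faces\,: les $8$ demi-lunes, plus la $2$-cellule centrale restante dont le bord déterminerait la marche complète.

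La difficulté principale résiderait dans la vérification combinatoire de cette construction. Contrairement aux cas $g=3$ et $g=4$ — où $S = 7 \equiv 1 \bmod 6$ admet un système de triples de Steiner et où la MC est un cycle eulérien parcourant chaque arête exactement une fois — ici $S = 8 \not\equiv 1, 3 \bmod 6$. Un décompte des arêtes orientées montre que les $8$ MNC de longueur $3$ couvrent $24$ des $2A = 50$ demi-arêtes, de sorte que la marche complète aurait longueur $26$ et traverserait donc une unique arête dans ses deux sens\,: elle ne serait pas eulérienne, tout en demeurant complète. Il faudrait ainsi choisir avec soin le motif d'identification des coins et le placement des diagonales pour garantir simultanément (i) que les $34$ coins se regroupent bien en $8$ sommets distincts, (ii) que $G$ est ordinaire, c'est-à-dire sans lacet ni arête multiple — ce qui reste possible puisque $25 \le \binom{8}{2} = 28$ — et (iii) que la dynamique des marches à gauche produit précisément une MC et huit MNC de longueur $3$. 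Cette vérification, essentiellement une inspection d'une figure analogue aux \cref{fig-optimal_g3} et \cref{fig-optimal_g4}, constituerait le cœur technique de la preuve.
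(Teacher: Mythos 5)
Your upper-bound argument coincides with the paper's: you collect $V_c(0),\dots,V_c(4)\le 6\frac{1}{4}$, invoke \cref{Prop-br(g)} to get $V_c(5)\le b_r(5)$, and evaluate the four terms correctly so that $b_r(5)=3+\frac{26}{8}=6\frac{1}{4}$ is attained at $S=8$ (incidentally, your evaluation fixes two typos in the paper, whose statement writes $b_r(4)$ and whose proof writes the maximum as $6$). The conclusion $V(G)\le V(5)=\max_{0\le h\le 5}V_c(h)\le 6\frac{1}{4}$ is exactly the paper's first paragraph.

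The gap is in the saturation claim. Proving « l'égalité est possible » requires exhibiting an actual graph, and your proposal stops precisely where that proof must begin. You do correctly derive the forced structure — $S=8$, $A=25$, a MC of length $26$ traversing exactly one edge in both directions, eight MNC of length $3$ — and your polygon-with-diagonals scheme (a $34$-gon with $8$ half-moons, in the spirit of \cref{fig-optimal_g3}, \cref{fig-optimal_g4} and \cref{prop-sature}) is a plausible template. But you then state that choosing the identification pattern so that (i) the $34$ corners glue to $8$ distinct vertices, (ii) the graph is ordinary, and (iii) the left-walk dynamics yields exactly one MC and eight length-$3$ MNC « constituerait le c\oe ur technique de la preuve » — that is, you leave the entire existence statement unproven rather than proven. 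The paper does this work concretely: it exhibits an explicit ordinary connected graph on $8$ vertices (vertices $1$ and $2$ of valence $7$, the others of valence $6$, so $V(G)=50/8$), specifies the polarization by the cyclic orders at each vertex (\cref{fig-optimal_g5}), writes out the MC of length $26$ — in which $\{1,2\}$ is indeed the unique edge traversed in both senses — and lists the eight length-$3$ MNC; completeness then follows from the counting argument that these nine pairwise distinct walks traverse $50=2A$ oriented edges, hence all of them, and the genus is confirmed by Euler's formula $8-25+9=-8$, i.e. $\gamma=5$. Without such an explicit object (or some other argument producing one), the second half of the proposition remains an unverified ansatz, not merely a deferred computation.
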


\begin{proof} Nous avons déjà établi que $V_c(0), \ldots, V_c(4) \le 6 \frac{1}{4}$, tandis que $V_c(5) \le b_r(5) = \mathrm{max} \, \left\{ 5 \, , \, 6 \, , \, 6 \frac{1}{4} \, , \, 6\right\} = 6$. Ainsi, $V(5) = \mathrm{max}_{0 \le g \le 5} \, V_c(g) \le 6 \frac{1}{4}$.

Un graphe ordinaire $G$ qui sature cette borne est présenté dans la \cref{fig-optimal_g5}. Les sommets de $G$ sont numérotés de $1$ à $8$ et l'ensemble des arêtes incidentes à chaque sommet est dessiné. La polarisation est donnée par les ordres cycliques prescrits par la Figure.

Il est clair que $G$ est ordinaire et il est facile de constater qu'il est connexe. Les sommets $1$ et $2$ ont valence $7$ et les autres sommets ont valence $6$, d'où une valence moyenne $V(G) = 50/8$. Le graphe a donc $25$ arêtes.

La MC, de longueur $26$, est donnée par la suite de sommets suivante\,:
\[ 1, 2, 3, 7, 8, 5, 6, 3, 8, 2, 1, 5, 4, 6, 2, 7, 1, 3, 4, 8, 1, 4, 2, 5, 7, 6, 1. \]
Les huit MNC ont toutes longueur $3$ et sont\,:
\begin{align}
\notag &(i) \, 3, 2, 4, 3.  \; &&(ii) \, 7, 3, 6, 7.  \;  &&&(iii) \, 8, 7, 2, 8. \;  &&&&(iv) \, 5, 8, 4, 5.   \\
\notag &(v) \, 6, 5, 2, 6.  \; &&(vi) \,  8, 3, 1, 8.  \;  &&&(vii) \, 5, 1, 7, 5. \;  &&&&(viii) \, 6, 4, 1, 6.   
\end{align}

\noindent Pour se convaincre que nous avons trouvé toutes les marches à gauche et que la MC est bien complète, il suffit de remarquer que les neuf marches identifiées sont distinctes et parcourent $50$ arêtes orientées (forcément distinctes), et donc qu'elles parcourent toutes les arêtes orientées de $G$. Finalement, le genre du graphe polarisé est bien $\gamma = 5$.
\begin{figure}[H]
  \centering
\includegraphics[angle=270, width=\textwidth]{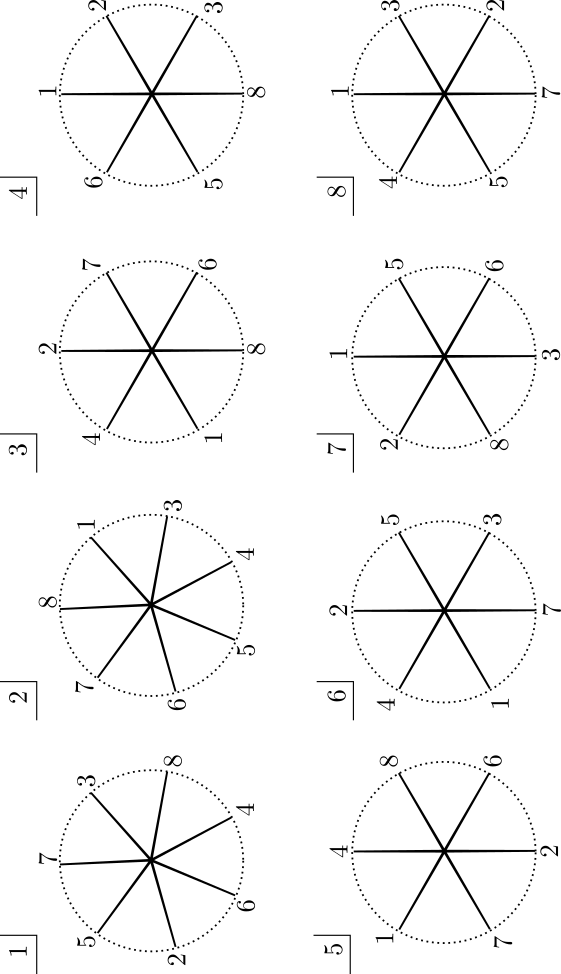}
\caption{Graphe optimal pour $g=5$.}\label{fig-optimal_g5}
\end{figure}
\end{proof}

\begin{figure}[H]
  \centering
\includegraphics[width=0.6\textwidth]{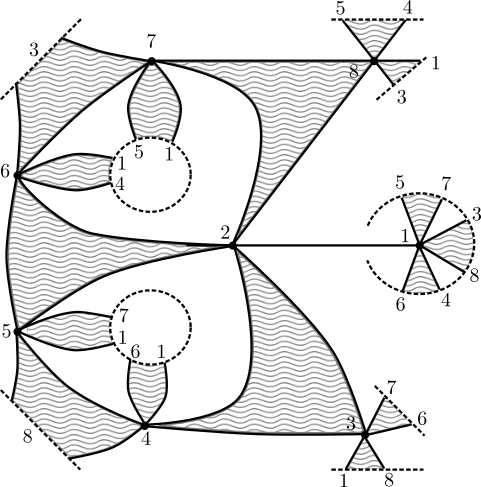}
\caption{Représentation «\,cubiste\,» dans le plan du graphe optimal pour $g=5$. Les huit triangles délimités par les MNC sont hachurés\,: trois sont représentés connexes, les cinq autres sont représentés disconnexes.}\label{fig-optimal_g5_cubiste}
\end{figure}

%%%%%%%%%%%%%%%% Théorème d'optimalité

\section{\Cref{thm-optimalite}} \label{sec-thm-optimalite}

La démonstration du \cref{thm-optimalite} repose sur une compréhension plus précise des structures des graphes $G$ (généralisés, plongés cellulairement dans $\Sigma_g$ et avec MC) qui satisfont $V_r(G) = b_r(g)$, pour les $g$ appropriés. Les parties (a) et (b) sont mutuellement indépendantes, mais elles reposent implicitement toutes deux sur le fait que $b_r(g)$ est alors égal à deux des quatre termes dont $b_r(g)$ est le maximum.

\subsection{Démonstration de la partie (a)} Pour $g = 6k^2$, observons que $S_0(g) := 2 + \sqrt{6g} = 2 + 6k$ est un entier pair, donc $S_0(g) = \lfloor S_0(g) \rfloor_0 = \lceil S_0(g) \rceil_0$. Par ailleurs, quel que soit $g \ge 1$, nous avons $S_0(g) < S_1(g) < S_0(g) + 1$\,; ainsi, pour $g = 6k^2$, nous avons $\lfloor S_1(g) \rfloor_1 = S_0(g)-1$ et  $\lceil S_1(g) \rceil_1 = S_0(g) + 1$. Puisque $6g-4 = S_0(g)(S_0(g) -4) > S_0(g)$, des calculs simples établissent que
\[ \label{eq-astast}\tag{$\ast\ast$} \lfloor S_0(g) \rfloor_0 - 1 = 3 + \dfrac{6g-4}{\lceil S_0(g) \rceil_0} > \mathrm{max} \left\{  \lfloor S_1(g) \rfloor_1 - 1 \, , \, 3 + \dfrac{6g-3}{\lceil S_1(g) \rceil_1} \right\} \, .\]
Bref, nous avons
\[ b_r(g) = \lfloor S_0(g) \rfloor_0 - 1 = 3 + \dfrac{6g-4}{\lceil S_0(g) \rceil_0 } \, .\]

De manière absurde, soit $G$ un graphe généralisé, plongé cellulairement dans $\Sigma_g$ et avec MC qui satisfait $V_r(G) = b_r(g)$. Par le \cref{lem-reduction}, nous pouvons supposer que $G$ satisfait la condition (C). Posons $S = S(G)$. Par l'inégalité ($\clubsuit_r$), nous avons
\[ S_0(g) -1 = b_r(g) = V_r(G) \le S - 1 \, , \]
tandis que par ($\diamondsuit$) et \eqref{eq-astast},
\[  3 + \dfrac{6g-3}{S_0(g)+1} < 3 + \dfrac{6g-4}{S_0(g)} = b_r(g) = V_r(G) \le V(G) \le 3 + \dfrac{6g-3}{S} \, .\]
Il en résulte que $S = S_0(g)$ et que $V_r(G) = V(G) = 3 + (6g-4)/S$. En particulier, $G$ est ordinaire et nous pouvons travailler avec la valence totale.

Puisque $V(G)_v \le S - 1$ pour un graphe ordinaire et du fait que $V(G) = S-1$, nous déduisons que $G$ est un graphe complet sur $S$ sommets et que tous les sommets de $G$ sont de valence impaire.

De $V(G) = 3 + (6g-4)/S$ et de ($\spadesuit$), nous déduisons $A = 3S/2 + 3g - 2$. Du fait que $G$ soit cellulaire dans $\Sigma_g$, la formule d'Euler implique que $G$ possède $f:= F-1 = S/2 + g - 1 = (A-1)/3$ MNC. Par la condition (C) et la définition d'une MC, ces $f$ MNC parcourent (dans une seule direction) entre $A-1$ et $A$ arêtes non orientées distinctes de $G$ (puisque chaque arête non orientée apparaît au plus une fois dans tous les MNC étant donné l'existence d'un MC). Il y a donc au moins $f-1$ MNC de longueur $3$ et la MNC restante a longueur $3$ ou $4$\,; nous considérons ces deux possibilités séparément.

Si la MNC restant a longueur $4$, alors la MC a longueur $A$ et il s'agit donc d'un cycle eulérien. Par le théorème d'Euler--Hierholzer, $G$ n'a donc que des valences paires, ce qui est en contradiction avec le fait que $G$ est le graphe complet sur $S$ sommets.

Si la MNC restant a longueur $3$, alors la MC a longueur $A+1$ et il existe précisément une arête dans $G$ qui soit parcourue dans les deux sens par la MC. Soit $G'$ le sous-graphe obtenu en retirant cette arête. $G'$ est ainsi un graphe ordinaire dont l'ensemble des arêtes est partitionné par les $f$ MNC de longueur $3$ de $G$. Cette observation implique que tous les sommets de $G'$ ont valence paire\,; par conséquent, $G$ n'a que deux sommets de valence impaire. Puisque $S > 2$, il s'agit encore d'une contradiction.

Bref, un tel graphe $G$ n'existe pas, d'où $V_c(g) < b_r(g)$.

\subsection{Démonstration de la partie (b)} Nous procédons par analyse-synthèse\,: nous identifions d'abord diverses propriétés des graphes qui peuvent réaliser la borne $b(g)$, puis nous montrons qu'il existe de tels graphes pour les $S$ annoncés.

\subsubsection{Analyse} Le résultat suivant est notre boussole\,:

\begin{prop}\label{prop-graphecomplet}
Soient $g \geq 1$ et $G \subset \Sigma_g$ un graphe généralisé, plongé cellulairement, avec MC, satisfaisant la condition (C) et tel que $V_r(G) = b(g)$. Alors $G$ est un graphe complet à $S$ sommets, $S \equiv 1 \mbox{ ou } 3 \mbox{ mod } 6$ et $g = (S-1)(S-3)/6$. De plus, la MC de $G$ est un cycle eulérien et les MNC ont toutes longueur $3$.
\end{prop}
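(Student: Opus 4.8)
The plan is to re-run the equality case of the proof of \cref{Prop-b(g)}. That argument shows that whenever $V_r > 3$ one has the chain
\[ (V_r+1)(V_r-3) \;\le\; S\,(V_r-3) \;\le\; 6g-4+\pi(S) \;\le\; 6g-3, \]
where the first step is ($\clubsuit_r$), the second is ($\diamondsuit_r$), and the last is the trivial bound $\pi(S)\le 1$. Since $b(g)$ is the positive root of $x^2-2x-6g$, the hypothesis $V_r(G)=b(g)$ is equivalent to $(V_r+1)(V_r-3)=6g-3$, which forces \emph{every} inequality in the chain to be an equality. (Note $b(g)\ge b(1)=1+\sqrt 7>3$ for $g\ge 1$, so the standing assumption $V_r>3$ is automatic.) First I would read off the three resulting equalities: $S=V_r+1$, so ($\clubsuit_r$) is saturated and the reduced graph has average reduced valence $S-1$, hence every vertex attains the maximum $S-1$ and $G_r=K_S$; next $\pi(S)=1$, so $S$ is odd; and finally ($\diamondsuit_r$) is saturated, $V_r=3+(6g-3)/S$.

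Next I would extract the arithmetic. From $S=V_r+1=2+\sqrt{1+6g}$ one gets $1+6g=(S-2)^2$, hence $g=(S-1)(S-3)/6$. Since $S$ is odd and $g$ is an integer, $6\mid(S-1)(S-3)$ forces $3\mid(S-1)(S-3)$, which fails precisely when $S\equiv 2\pmod 3$. Combining ``$S$ odd'' with ``$S\not\equiv 2\pmod 3$'' yields $S\equiv 1$ or $3\pmod 6$, as claimed.

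Then I would upgrade $G_r=K_S$ to $G=K_S$ by ruling out loops and multiple edges. Because $\pi(S)=1$, the total-valence bound ($\diamondsuit$) reads $V\le 3+(6g-3)/S$, which is exactly the saturated value of $V_r$; as $V_r\le V$ always holds, this gives $V=V_r$. Since $V(G)_v\ge V_r(G)_v$ vertexwise and the two averages coincide, equality must hold at each vertex, so no vertex supports any extra (loop or parallel) edge. Hence $G$ is the ordinary complete graph $K_S$, and in particular $A=\binom{S}{2}=S(S-1)/2$.

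Finally I would pin down the walk structure by a face count. Euler's relation $2-2g=S-A+F$ with $A=S(S-1)/2$ and $g=(S-1)(S-3)/6$ simplifies to $F=1+A/3$, so the bound ($\heartsuit$) is saturated. Recalling how ($\heartsuit$) was obtained — the MC uses at least $A$ of the $2A$ oriented edges, leaving at most $A$ oriented edges for the MNC, each of length $\ge 3$ by condition (C) — saturation forces the MNC to use exactly $A$ oriented edges split into blocks of exactly $3$, and the MC to have length exactly $A$. Thus all MNC have length $3$ and the MC traverses each of the $A$ unoriented edges exactly once, i.e.\ it is an Eulerian cycle. I expect the only real obstacle to be bookkeeping discipline: correctly attributing each of the three simultaneous equalities to the inequality it saturates, and verifying the numerical identity $F=1+A/3$, so that the conclusions about the walks genuinely follow from the saturation of ($\heartsuit$) rather than being assumed.
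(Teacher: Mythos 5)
Your proof is correct and takes essentially the same route as the paper's: the paper likewise runs the equality case of the structural inequalities, obtaining $S = V_r(G)+1$ from $(\clubsuit_r)$, $V = V_r$ and hence $G = K_S$ ordinary and complete from the saturated valence bound, $S$ odd and $g = (S-1)(S-3)/6$ from the arithmetic, and then combines Euler's formula with condition (C) to force all MNC to have length $3$ and the MC to be an Eulerian cycle. The only differences are cosmetic: the paper threads the chain through $(\diamondsuit)$ together with $V_r \le V$ rather than through $(\diamondsuit_r)$, and it deduces oddness of $S$ from the parity of $(S-2)^2 = 1+6g$ instead of from saturating $\pi(S) \le 1$.
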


\begin{proof}
Par hypothèse et par ($\diamondsuit$), $b(g) = V_r(G) \le V(G) \le 3 + (6g-3)/S$. Ainsi, $1+6g  \le (2 + (6g-3)/S)^2$, et comme $g \geq 1$ ceci implique $S^2 - 4S - (6g-3) \le 0$, c’est-à-dire $S \le 2 + \sqrt{1 + 6g} = 1 + V_r(G)$. Puisque ($\clubsuit_r$) stipule l'inégalité opposée, nous obtenons $S = 1 + V_r(G) = 2 + \sqrt{1+6g}$ et donc aussi $V(G) = V_r(G)$. D’une part, ceci implique que $G$ est un graphe ordinaire et complet. D’autre part, puisque $g$ est entier, $(S-2)^2 = 1 + 6g$ est impair et donc $S$ est impair. De plus, $g = (S-1)(S-3)/6$, donc $S \equiv 1 \mbox{ ou } 3 \mbox{ mod } 6$.

Il s'ensuit aussi que $A(G) = 3(S-1)/2 + 3g$. Par la formule d'Euler, nous trouvons que $G$ possède $f:= F-1 = (S-1)/2 + g = A/3$ MNC. Par la condition (C) et la définition de la MC, nous déduisons que les MNC parcourent (dans un seul sens) précisément $3f = A$ arêtes non orientées distinctes, c'est-à-dire toutes les arêtes de $G$. Ceci montre que les $f$ MNC ont toutes longueur $3$ et que la MC, ayant longueur $A$, est un cycle eulérien.
\end{proof}

L'idée pour démontrer le \cref{thm-optimalite}(b) consiste donc, étant donné un $S \equiv 1 \mbox{ ou } 3 \mbox{ mod } 6$ convenable, à trouver une polarisation du graphe complet $K_S$ qui admette une MC qui soit un cycle eulérien et dont toutes les MNC aient longueur $3$. En effet, le théorème fondamental des plongements cellulaires se chargerait ensuite de plonger $K_S$ cellulairement dans la surface $\Sigma_g$ appropriée.

Il importe de souligner qu'il n'y a aucune obstruction évidente à l'existence d'une telle polarisation, quel que soit $S \equiv 1 \mbox{ ou } 3 \mbox{ mod } 6$. D'un côté, pour $S$ est impair, le théorème d'Euler--Hierholzer assure que $K_S$ admet un cycle eulérien. Il s'avère que tout cycle eulérien permet de définir une polarisation ayant ce cycle pour MC, mais une telle polarisation n'a pas forcément des MNC de longueur $3$. D'un autre côté, c'est un fait classique que $S \equiv 1 \mbox{ ou } 3 \mbox{ mod } 6$ est la condition nécessaire et suffisante pour que $K_S$ admette un système de triples de Steiner, c'est-à-dire une partition de l'ensemble de ses arêtes en $3$-cycles (une preuve simple de ce fait se trouve dans \cite{S2}). Il s'avère que tout système de triples de Steiner sur $K_S$ permet de définir une polarisation ayant ces triples d'arêtes parmi ses MNC, mais une telle polarisation n'a pas forcément de MC.

Notre défi consiste donc à réconcilier ces deux facettes en trouvant un cycle eulérien et un système de triples de Steiner qui soient compatibles en ce sens qu'ils proviennent d'une polarisation ayant le cycle eulérien pour MC et les triples de Steiner pour MNC. Notre stratégie pour y parvenir est une généralisation de la stratégie derrière la construction des graphes des figures \ref{fig-optimal_g3} et \ref{fig-optimal_g4}. (En fait, la \cref{prop-optimal_g4} n'est nulle autre que le cas particulier $S=7$ du \cref{thm-optimalite}(b).) Nous synthétiserons le résultat suivant\,:

\begin{prop}\label{prop-sature}
Soit $G$ un graphe ordinaire avec $S$ impair plongé cellulairement dans $\Sigma_g$, qui a une MC et qui sature la borne ($\diamondsuit$). Alors la MC de $G$ est un cycle eulérien et les MNC ont toutes longueur $3$. De plus, il existe un polygone $\Pi$ à $4f = 4g + 2(S-1)$ côtés ayant les propriétés suivantes\,:
\begin{enumerate}[(i)]
\item Il y a $2f$ côtés indicés $a_1, \ldots, a_{2f}$ et orientés selon le sens antihoraire. Leur position dans $\partial \Pi$ est contrainte comme suit\,: pour chaque $j = 1, \ldots, f$, les côtés $a_{2j-1}$ et $a_{2j}$ sont consécutifs dans $\partial \Pi$ suivant le sens antihoraire.

\item Les $2f$ côtés restants sont indicés $\bar{a}_1, \ldots, \bar{a}_{2f}$ et orientés selon le sens horaire de $\partial \Pi$.

\item Pour chaque $j=1, \dots, f$, il y a un segment plongé $d_j$ joignant dans $\mathrm{int}(\Pi)$ la source de $a_{2j-1}$ et la cible de $a_{2j}$, et orienté de la sorte. Les $d_j$ sont ne peuvent s'intersecter qu'aux coins de $\Pi$.
\end{enumerate}
La surface $\Sigma_g$ est obtenue comme quotient de $\Pi$ en identifiant, pour tout $k=1, \ldots, 2f$, les côtés $a_k$ et $\bar{a}_k$ de façon à ce que les deux côtés induisent une même orientation sur le segment $e_k$ résultant. Le graphe $G \subset \Sigma_g$ est alors donné par l'union $(\bigcup_{1 \le k \le 2f} e_k ) \cup (\bigcup_{1 \le j \le f} d_j )$. Les MNC de $(G,P)$ sont les cycles $[a_{2j-1}, a_{2j}, \bar{d}_j]$ ($j=1, \ldots, f$) et la MC est le bord de la $2$-cellules dans $\Pi$ délimitées par les $d_j$ et les $\bar{a}_k$ parcouru dans le sens antihoraire.
\end{prop}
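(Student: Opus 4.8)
Le plan est d'abord d'établir les deux premières affirmations par un décompte d'arêtes analogue à celui de la \cref{prop-graphecomplet}, puis de reconstruire explicitement le polygone $\Pi$ en recollant les faces triangulaires sur la face de la MC.

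\emph{Décompte.} Puisque $G$ est ordinaire, il satisfait la condition (C), de sorte que toute MNC a longueur $\ge 3$. La saturation de ($\diamondsuit$), avec $\pi(S)=1$ ($S$ impair) et $\gamma = g$ (le plongement étant cellulaire), donne $V = 3 + (6g-3)/S$, d'où $A = 3(S-1)/2 + 3g$ par ($\spadesuit$) et $f := F - 1 = g + (S-1)/2$ par la formule d'Euler. On vérifie alors l'identité clé $3f = 3g + 3(S-1)/2 = A$. Comme la MC est complète, elle a longueur $\ge A$, donc les MNC couvrent ensemble au plus $2A - A = A = 3f$ arêtes orientées ; puisqu'il y a $f$ MNC de longueur $\ge 3$, toutes ces inégalités sont des égalités. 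Par conséquent la MC a longueur exactement $A$ et, étant complète, parcourt chaque arête exactement une fois : c'est un cycle eulérien ; de plus chaque MNC a longueur exactement $3$. On en tire aussi que chaque arête est parcourue une fois par la MC (dans un sens) et une fois par une unique MNC (dans l'autre sens).

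\emph{Reconstruction de $\Pi$.} Le plongement étant cellulaire, $\Sigma_g \setminus G$ est une réunion disjointe de $F = f+1$ $2$-cellules ouvertes : un $A$-gone $D$ bordé par la MC et $f$ triangles $T_1, \ldots, T_f$ bordés par les MNC. Pour chaque $j$, je choisis une arête $d_j$ du triangle $T_j$ ; d'après le décompte précédent, les deux faces incidentes à $d_j$ sont précisément $D$ et $T_j$, et je recolle alors $T_j$ sur $D$ le long de $d_j$ (les deux autres côtés de $T_j$, parcourus dans cet ordre après $\bar{d}_j$ par la marche, reçoivent les étiquettes $a_{2j-1}$ et $a_{2j}$). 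Le patron de recollement, dont les sommets sont les faces $D, T_1, \ldots, T_f$ et les arêtes les recollements le long des $d_j$, est une étoile centrée en $D$, donc un arbre ; recoller des disques selon un arbre, chaque fois le long d'un unique arc, produit un disque. On obtient ainsi un polygone $\Pi$, dont le mot de bord s'obtient de celui de $D$ (parcouru dans le sens antihoraire, celui de la MC) en substituant à chaque symbole $d_j$ le couple $a_{2j-1} a_{2j}$. Ceci donne immédiatement les $4f$ côtés, la consécutivité de $a_{2j-1}$ et $a_{2j}$ (propriété (i)), le fait que les $d_j$ deviennent des cordes intérieures ne se rencontrant qu'aux coins (elles bordent des triangles disjoints), ainsi que la propriété (iii) issue de la description de $T_j = [a_{2j-1}, a_{2j}, \bar{d}_j]$. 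En orientant chaque $e_k$ par son parcours triangulaire, le côté $a_k$ s'accorde au sens antihoraire de $\partial\Pi$ et $\bar{a}_k$ (le parcours par la MC) au sens horaire, d'où (ii). Enfin, comme seuls les $d_j$ ont été recollés, les identifications restantes sont exactement $a_k \sim \bar{a}_k$, et elles redonnent $(\Sigma_g, G, P)$.

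\emph{Principal obstacle.} Le cœur conceptuel est l'identité $3f = A$ (qui force l'eulérianité de la MC et la longueur $3$ des MNC) et la remarque que le patron de recollement est un arbre (qui force $\Pi$ à être un disque). La partie la plus délicate à rédiger sera la comptabilité des orientations : il faut vérifier que le sens antihoraire de $\partial\Pi$, hérité de l'orientation de $\Sigma_g$ via la face $D$, induit bien sur les côtés les sens opposés annoncés en (i)--(ii), et que l'identification $a_k \sim \bar{a}_k$ respecte l'orientation de la surface. Ce point, bien que routinier, exige de fixer soigneusement la convention reliant la règle « tourner à gauche » à l'orientation des faces du plongement polarisé.
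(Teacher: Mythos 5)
Your proposal is correct and takes essentially the same route as the paper's own proof: the identical Euler-characteristic count (saturation of ($\diamondsuit$) gives $A = 3(S-1)/2+3g$, hence $f = A/3$ MNCs, forcing all MNCs to have length $3$ and the MC to be Eulerian), followed by the identical cut-and-paste construction of $\Pi$, namely gluing each triangular MNC-face onto the MC-face along one chosen side $d_j$ and relabelling the remaining sides as $a_{2j-1}, a_{2j}$ and $\bar a_k$. The only cosmetic differences are that you justify diskness of $\Pi$ by the star/tree observation (the paper simply asserts it), you work directly from cellularity of the embedding where the paper invokes \cref{thm-ThmFond}, and the orientation bookkeeping you defer is exactly what the paper settles by fixing the orientation-reversing convention in its gluing of $\bar d'_j$ to $d'_j$.
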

\begin{proof} Par saturation de ($\diamondsuit$), $A = 3(S-1)/2 + 3g$. Par la formule d'Euler, $G$ possède $f:=F-1 = (S-1)/2 + g = A/3$ MNC, qui parcourent précisément $3f = A$ arêtes. Bref, toutes les MNC ont longueur $3$ et la MC, ayant longueur $A$, est un cycle eulérien.

Chaque arête non orientée de $G$ apparaît précisément dans une seule MNC. Par le théorème fondamental des plongements polarisés  (\cref{thm-ThmFond}), nous pouvons exprimer $\Sigma_g$ sous la forme d'un CW-complexe dont $G$ est le $1$-squelette et qui a $F = f+1$ $2$-cellules collées à $G$ suivant chacune une marche à gauche de $G$. Le reste de la démonstration consiste simplement à décrire ce recollement de ces $2$-cellules le long de $G$ en différentes étapes.

Les $f$ $2$-cellules associées aux MNC être interprétées comme étant des polygones $\Pi_j$ à $3$ côtés ($j=1, \dots, f$), et la $2$-cellule associée à la MC comme étant un polygone $\Pi_0$ à $A = 3f$ côtés. Pour chaque $\Pi_j$ avec $1 \le j \le f$, nous sélectionnons un côté que nous notons $\bar{d}'_j$ (orienté selon le sens antihoraire de $\partial \Pi_j$). Ces $f$ segments correspondent à $f$ arêtes distinctes $d''_j$ dans $G$ et appartiennent à des MNC distinctes\,; ces arêtes sont parcourues en sens inverse par la MC. Ainsi, il y a $f$ côtés de $\Pi_0$ qui correspondent à ces arêtes inversées\,; nous notons ces côtés $d'_j$ et nous les orientons selon le sens antihoraire de $\partial \Pi_0$.

Pour chaque $j=1, \ldots, f$, nous collons $\Pi_j$ le long de $\Pi_0$, par identification du côté $\bar{d}'_j \subset \partial \Pi_j$ et du côté $d'_j \subset \partial \Pi_0$, de sorte que le segment résultant $d_j$ ait l'orientation de $d'_j$ et l'orientation inverse de $\bar{d}'_j$. Nous obtenons ainsi le polygone $\Pi$ à $4f$ côtés.

Pour $j=1, \ldots, f$, les deux côtés restants de $\Pi_j \subset \Pi$ sont nommés (dans l'ordre antihoraire) $a_{2j-1}$ et $a_{2j}$. Tous ces côtés $a_k$ sont en bijection avec les arêtes de $G \setminus \{ d''_1, \ldots, d''_f\}$\,; ces arêtes sont parcourues en sens inverse par la MC. Il y a donc une bijection entre l'ensemble des $a_k$ et l'ensemble des côtés encore non indicés de $\Pi_0$\,; nous notons $a_k \mapsto \bar{a}_k$ cette bijection.

Ainsi, en collant chaque côté $a_k$ de $\Pi$ au côté $\bar{a}_k$ correspondant, nous aboutissons à la même structure de CW-complexe sur $\Sigma_g$.
\end{proof}

\subsubsection{Stratégie de synthèse} Notre stratégie pour démontrer le \cref{thm-optimalite}(b) vise, pour tout $S \equiv 1 \mbox{ ou } 3 \mbox{ mod } 6$ convenable et $g = (S-1)(S-3)/6$, à exprimer explicitement $\Sigma_g$ comme un quotient d'un polygone $\Pi$ à $4f = 4g + 2(S-1)$ côtés et décoré de $f$ diagonales $d_1, \ldots, d_f$ (conformément aux points (i)--(ii)--(iii) ci-dessus) de façon à ce que $K_S \subset \Sigma_g$ soit donné par $G := (\bigcup_{1 \le m \le 2f} e_m ) \cup (\bigcup_{1 \le n \le f} d_n )$.

L'enjeu ici consiste à ordonner les indices $a_1, \ldots, \bar{a}_{2f}$ le long de $\partial \Pi$ de façon à ce que le quotient donne bien la surface $\Sigma_g$ et le graphe $K_S$. Cela revient à construire une \emph{valuation} $v$ appropriée qui associe à chaque coin de $\Pi$ un sommet de $K_S$. Nous détaillons notre stratégie en quelques étapes.

\textbf{Étape 1}\,: Nous faisons l'ansatz suivant\,: les côtés $a_1$ à $a_{2f}$ sont consécutifs, dans le sens antihoraire. Ainsi, le bord $\partial \Pi$ est divisé en deux hémisphères, l'un contenant tous les $a_m$ et l'autre contenant tous les $\bar{a}_m$.

Comme dans les figures \ref{fig-optimal_g3} et \ref{fig-optimal_g4}, il nous sera utile de penser au polygone $\Pi$ comme étant deux lignes horizontales alignées une au-dessus de l'autre pour former un rectangle. Les $2f$ côtés $\bar{a}_m$ de $\Pi$ sont distribués sur la face du haut (suivant un ordre encore à identifier) et les $2f$ côtés $a_1, \ldots, a_{2f+1}$ sont consécutivement distribués, de gauche à droite, sur la face du bas. Les diagonales $d_n$ sont tracées en demi-lunes afin de former $f$ triples d'arêtes  $(a_{2n-1}, a_{2n}, d_n)$. Notons que la face du bas du rectangle a $2f+1$ «\,coins\,», dénotés de gauche à droite $p_1, p_2, \dots, p_{2f}$, de sorte que $a_{2n-1} = p_{2n-1}p_{2n}$, $a_{2n} = p_{2n} p_{2n+1}$ et $d_n = p_{2n-1}p_{2n+1}$. Nous étiquetons de gauche à droite $p'_1, \dots, p'_{2f+1}$ les «\,coins\,» de la face du haut du rectangle et nous posons $a'_m := p'_m p'_{m+1}$ ($m = 1, \ldots, 2f$). Les côtés verticaux gauche et droite du rectangle sont des artéfacts de la présentation rectangulaire\,; nous y inscrivons des signes d'égalité pour souligner que les extrémités de gauche $p_1$ et $p'_1$ correspondent à un même coin de $\Pi$ et similairement pour les extrémités de droite $p_{2f+1}$ et $p'_{2f+1}$.

\textbf{Étape 2}\,: Nous choisissons un système de triples de Steiner sur l'ensemble des $S$ sommets de $K_S$\,; observons qu'un tel système consiste en $A/3 = f$ triples de sommets. Nous associons ensuite à chacun des coins $p_m$ du rectangle un sommet $v(p_m) \in K_S$, de façon à ce que les $f$ triples $(v(p_{2n-1}), v(p_{2n}), v(p_{2n+1}))$ ($j = n, \ldots, f)$ correspondent, dans un certain ordre, aux $f$ triples du système de Steiner choisi. Dans la pratique, nous imposerons toujours $v(p_1) = v(p_{2f+1})$.

Cette valuation $p_m \mapsto v(p_m)$ détermine une application de l'ensemble des côtés (orientés) $a_m$ et des diagonales (orientées) $d_n$ vers l'ensemble des arêtes orientées de $K_S$, à savoir $(p_m,p_{m'}) \mapsto v(p_m ,p_{m'}) = (v(p_m) , v(p_{m'})) \in \mathcal{A}_{orient}(K_S)$. Par définition d'un système de Steiner, l'application obtenue en oubliant l'orientation, $(p_m,p_{m'}) \mapsto v(p_m ,p_{m'}) = \{ v(p_m) , v(p_{m'})\} \in \mathcal{A}(K_S)$, s'avère injective.

\textbf{Étape 3}\,: Nous associons à chacun des coins $p'_m$ un sommet $v(p'_m) \in K_S$. Ceci induit une application $a'_m = (p'_m, p'_{m+1}) \mapsto v(a'_m) = (v(p'_m), v(p'_{m+1})) \in \mathcal{A}_{orient}(K_S)$. L'association $v$ doit respecter diverses contraintes\,:
\begin{enumerate}
\item Puisque $p_1$ et $p'_1$ incarnent le même coin de $\Pi$, nous exigeons $v(p_1) = v(p'_1) \in K_S$. Similairement, $v(p_{2f+1}) = v(p'_{2f+1}) \in K_S$. (Comme nous le mentionnons, en pratique, le même sommet de $K_S$ sera associé à ces quatre coins.)

\item Nous exigeons que les ensembles $\{v(a_m)\}_{1 \le m \le 2f}$ et $\{v(a'_m)\}_{1 \le m \le 2f}$ soient identiques dans $\mathcal{A}_{orient}(K_S)$. Comme la valuation est injective sur les arêtes $a_1,\ldots,a_{2f}$, il existe une bijection $\phi: \lbrace 1, \ldots, 2f \rbrace \circlearrowleft$ telle que $v(a_m) = v(a'_{\phi(m)})$. Nous posons alors $\bar{a}_m := a'_{\phi(m)}$. (Cette condition est requise par la \cref{prop-sature}.)

\item Nous souhaitons que la bijection $\phi$ soit suffisamment mélangeante -- en un sens que nous précisons à l'Étape 4 -- pour que le polygone $\Pi$ donne bien la surface $\Sigma_g$ et le graphe $K_S$ après recollement. En particulier, nous exigeons $v(a'_1) \neq v(a_1)$, $v(a'_{2f}) \neq v(a_{2f})$ et $\phi(m+1) \neq \phi(m) + 1$ pour tout $1 \le m \le 2f-1$, c'est-à-dire qu'aucune séquence de côtés consécutifs du type $(\bar{a}_m, \bar{a}_{m+1})$ n'apparaît en haut du rectangle. (Ces dernières conditions sont nécessaires pour obtenir le graphe $K_S$ après recollement\,; sans elles, le graphe obtenu aurait des sommets de valence $2$ ou $4$ et il ne s'agirait donc pas de $K_S$ où $S \equiv 1 \mbox{ mod } 6$.)
\end{enumerate}

\textbf{Étape 4}\,: Le résultat de la dernière étape est une séquence $(a'_1, \ldots, a'_{2f}) = (\bar{a}_{\phi^{-1}(1)}, \ldots, \bar{a}_{\phi^{-1}(2f)})$, encodée simplement sous la forme $(\phi^{-1}(1), \ldots, \phi^{-1}(2f))$. Cette séquence détermine un certain quotient $\Sigma$ de $\Pi$ en identifiant les côtés $a_m$ aux côtés $\bar{a}_m$, ainsi qu'un certain graphe $G \subset \Sigma$ ayant pour arêtes les images des $a_m$ et des $d_n$. Il ne nous reste plus qu'à vérifier si $\Sigma = \Sigma_g$ et si $G = K_S$. Pour ce faire, il suffit de vérifier si les coins de $\Pi$ donnent lieu à précisément $S$ sommets distincts dans le quotient.

En effet, d'une part, la surface $\Sigma$ est compacte et orientée. Soit $S'$ le nombre de points que les coins de $\Pi$ définissent dans $\Sigma$\,; soient $e_m$ ($1 \le m \le 2f$) les images dans $\Sigma$ des segments $a_m$. La surface $\Sigma$ admet ainsi une structure de CW-complexe composée d'une $2$-cellule (l'intérieur de $\Pi$), de $2f$ $1$-cellules (les $e_m$) et de $S'$ $0$-cellules (les images des coins de $\Pi$). Comme $f=(S-1)/2 +g$, par la formule d'Euler, il en résulte que $\Sigma = \Sigma_g$ si et seulement si $S' = S$.

D'autre part, soient $q$ et $q'$ deux coins de $\Pi$ qui sont identifiés dans le quotient. Supposons que $q$ débute (pour une certaine orientation) un côté $c_1$ de $\Pi$. Alors il existe une chaîne de coins $q_1 = q, q_2, \dots, q_n = q'$ où $q_2$ débute $\bar{c}_1$ et débute (pour une certaine orientation) un autre côté $c_2$ de $\Pi$, $q_3$ débute le côté $\bar{c}_2$ et débute (pour une certaine orientation) un côté $c_3$, etc. Par la condition (2) de l'Étape 3, si $q$ et $q'$ sont liés par une telle chaîne, alors $v(q) = v(q')$. Donc $S' \ge S$ et $K_S$ est le quotient de $G$ obtenu en identifiant les sommets de $G$ de même valuation $v$. Il en résulte que $G = K_S$ si et seulement si $S' = S$. 

Le critère $S' = S$ permet d'éclaircir la signification de la condition (3) de l'Étape 3\,: il faut que la bijection $\phi$ soit suffisamment mélangeante pour que $v(q) = v(q')$ seulement si les coins $q$ et $q'$ sont identifiés dans le quotient.

\subsection{Synthèse} Pour $S=9 \equiv 3 \mbox{ mod } 6$, donc pour $g = 8$ et $f = 12$, en employant le système de Steiner décrit dans \cite[$\S 3$]{S2}, nous avons trouvé la solution $(\phi^{-1}(k))_{1 \le k \le 24}$ suivante\,:
\[  (19, 11, 23, 6, 21, 15, 20, 7, 13, 10, 16, 5, 14, 2, 9, 24, 1, 22, 4, 17, 8, 3, 12, 18) \, . \]
Nous laissons au lecteur le soin de confirmer qu'il s'agit bien d'une solution, c'est-à-dire de vérifier que $S' = S = 9$.

Considérons maintenant $S = 6k+1$ premier où $k = 2l+1$ est impair, bref $S = 12l+7$ ($l \ge 0$). Nous pouvons alors construire des solutions en raffinant la stratégie précédente\,: nos hypothèses sur $S$ et les solutions des articles \cite{S1, O} nous serviront à former des systèmes de Steiner (décrits dans \cite[$\S 2$]{S2}) qui faciliteront l'obtention de valuations $v$ appropriées.

Nous aurons besoin des deux lemmes suivants\,:

\begin{lem}[\cite{S2}]\label{lem-Skolem}
Pour tout entier $k \ge 1$, il existe $3k$ entiers $\{\alpha_j, \beta_j, \gamma_j\}_{1 \le j \le k} \subset \mathbb{Z}$ tels que\,:
\begin{itemize}
\item[$\bullet$] Les $3k$ éléments sont distincts, non nuls et compris entre $1$ et $3k+1$.

\item[$\bullet$] Aucune paire de ces éléments ne somme à $6k+1$. Autrement dit, un seul des deux entiers $3k$ et $3k+1$ apparaît parmi les $3k$ éléments.

\item[$\bullet$] $j = \alpha_j < \beta_j < \gamma_j$ et $\alpha_j + \beta_j = \gamma_j$ pour tout $1 \le j \le k$.
\end{itemize}
\end{lem}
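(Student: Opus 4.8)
The plan is to recognize this as the classical existence theorem for Skolem and hooked-Skolem sequences, so I will first strip the statement down to that core. The requirement $\alpha_j = j$ forces the $k$ smallest chosen integers to be exactly $1, 2, \ldots, k$, and the relation $\alpha_j + \beta_j = \gamma_j$ is equivalent to $\gamma_j - \beta_j = j$. Thus the $2k$ integers $\{\beta_j, \gamma_j\}_{1 \le j \le k}$ lie in $\{k+1, \ldots, 3k+1\}$, and after the shift $\beta_j \mapsto \beta_j - k$, $\gamma_j \mapsto \gamma_j - k$, the problem becomes: partition $\{1, \ldots, 2k+1\} \setminus \{x\}$, for exactly one omitted value $x \in \{2k, 2k+1\}$, into $k$ pairs $\{\beta_j', \gamma_j'\}$ whose differences $\gamma_j' - \beta_j'$ run through $1, 2, \ldots, k$. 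The side constraints $j < \beta_j < \gamma_j$ then hold automatically, since $\beta_j \ge k+1 > k \ge j$ and $\gamma_j - \beta_j = j > 0$. These are precisely the Skolem sequences of order $k$ (when $x = 2k+1$) and the hooked Skolem sequences of order $k$ (when $x = 2k$).

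Next I would record a parity count that pins down which value must be omitted. Since $\gamma_j' + \beta_j' \equiv \gamma_j' - \beta_j' \pmod 2$, summing over $j$ gives $\sum_j(\gamma_j' + \beta_j') \equiv \sum_{j=1}^{k} j = k(k+1)/2 \pmod 2$. On the other hand $\sum_j(\gamma_j' + \beta_j')$ is the sum of all elements of $\{1, \ldots, 2k+1\} \setminus \{x\}$, namely $(2k+1)(k+1) - x \equiv (k+1) - x \pmod 2$. Comparing the two shows that $x = 2k+1$ is forced when $k \equiv 0, 1 \pmod 4$ and that $x = 2k$ is forced when $k \equiv 2, 3 \pmod 4$. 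This both confirms the necessity of the clause asserting that exactly one of $3k$ and $3k+1$ appears, and tells us which of the two sequence types to build in each residue class.

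Finally I would establish existence by exhibiting the pairings explicitly in the four classes modulo $4$: Skolem's construction \cite{S1, S2} supplies pairs with differences $1, \ldots, k$ on $\{1, \ldots, 2k\}$ for $k \equiv 0, 1 \pmod 4$, and O'Keefe's construction \cite{O} supplies the hooked variant on $\{1, \ldots, 2k+1\} \setminus \{2k\}$ for $k \equiv 2, 3 \pmod 4$. Undoing the shift by $k$ and adjoining $\alpha_j = j$ then yields the desired triples. The verification that each listed construction is valid is purely a bookkeeping check — that the prescribed pairs are disjoint, cover the required range, and realize every difference $1, \ldots, k$ exactly once — and this, especially in the two hooked cases, is where the only genuine work lies; by contrast the reduction and the parity count are immediate.
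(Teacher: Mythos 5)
Your proposal is correct and follows essentially the same route as the paper: set $\alpha_j = j$, shift $\beta_j, \gamma_j$ down by $k$ to reduce the statement to the existence of Skolem sequences (for $k \equiv 0,1 \pmod 4$) and hooked Skolem sequences (for $k \equiv 2,3 \pmod 4$), then invoke Skolem \cite{S1} and O'Keefe \cite{O}. Your parity count determining which of $2k$, $2k+1$ must be omitted is a nice supplement, but it is already implicit in the ``if and only if'' form of the cited results, so the two arguments coincide in substance.
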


\begin{proof}
Posons $\alpha_j = j$, $\beta'_j = \beta_j - k$ et $\gamma'_j = \gamma_j - k$. Le problème consiste alors à trouver $2k$ entiers distincts $\{\beta'_j, \gamma'_j\}_{1 \le j \le k}$ entre $1$ et $2k+1$ tels que $\gamma'_j - \beta'_j = j$ pour tout $1 \le j \le k$ et tels qu'un seul des entiers $2k$ et $2k+1$ apparaisse parmi ces $2k$ entiers. Skolem \cite{S1} a montré que de tels entiers existent, aucun égal à $2k+1$, si et seulement si $k \equiv 0 \mbox{ ou } 1 \mbox{ modulo } 4$\,; O'Keefe \cite{O} a montré que de tels entiers existent, aucun égal à $2k$, si et seulement si $k \equiv 2 \mbox{ ou } 3 \mbox{ modulo } 4$.
\end{proof}

\begin{lem}\label{lem-Skolem_copremier}
Soient $\{\alpha_j, \beta_j, \gamma_j\}_{1 \le j \le k} \subset \mathbb{Z}$ comme dans le \cref{lem-Skolem}. Supposons que $S := 6k+1$ soit premier. Alors les sommes $\alpha := \sum_{j=1}^k \alpha_j$, $\beta := \sum_{j=1}^k \beta_j$ et $\gamma := \sum_{j=1}^k \gamma_j$ sont copremiers avec $S$.
\end{lem}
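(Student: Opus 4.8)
The plan is to exploit the fact that the three sums are essentially \emph{determined} by the combinatorial constraints, so that reducing them modulo $S$ amounts to checking divisibility of a handful of small constants. First I would record that $\alpha_j = j$ forces $\alpha = \sum_{j=1}^k j = k(k+1)/2$, and that summing the relations $\alpha_j + \beta_j = \gamma_j$ gives $\alpha + \beta = \gamma$, whence $\alpha + \beta + \gamma = 2\gamma$ equals the total $T$ of the $3k$ chosen integers. Since those integers are $\{1, \ldots, 3k+1\}$ with exactly one element $m \in \{3k, 3k+1\}$ removed, $T = \tfrac{(3k+1)(3k+2)}{2} - m$ is known explicitly; therefore $\gamma = T/2$ and $\beta = \gamma - \alpha$ admit closed forms depending only on $k$ and on which of the two values is missing.

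Next I would reduce everything modulo the prime $S = 6k+1 \ge 7$. The key facts are $6k \equiv -1 \pmod S$ and that $S$ is coprime to $6$ (hence to $2$, $3$, $4$, $36$, $72$), so all denominators and auxiliary multipliers are invertible and divisibility is unaffected by clearing them. For $\alpha$ one gets $36\cdot 2\alpha = 36k^2 + 36k \equiv (6k)^2 + 6(6k) \equiv 1 - 6 = -5$, so $S \mid \alpha$ would force $S \mid 5$, impossible for $S \ge 7$; thus $\gcd(\alpha, S) = 1$ unconditionally. I would then split on $m$. When $m = 3k+1$, clearing the factor $4$ and using $6k\equiv-1$ gives $4\cdot 4\gamma = 36k^2 + 12k \equiv 1 - 2 = -1$ and $36\cdot 4\beta = 252k^2 + 36k \equiv 7 - 6 = 1$, so both $S\mid\gamma$ and $S\mid\beta$ would force $S \mid 1$: this case is clean for every $k$. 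When $m = 3k$, the same computations yield $4\cdot 4\gamma = 36k^2 + 12k + 8 \equiv 7$ and $36\cdot 4\beta = 252k^2 + 36k + 72 \equiv 73$, so the only possible obstructions are $S \mid 7$ (forcing $S=7$, i.e. $k=1$) and $S \mid 73$ (forcing $S=73$, i.e. $k=12$).

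The last and most delicate step is to rule out these two exceptional pairs. Here I would invoke the dichotomy underlying \cref{lem-Skolem}: after the shift $\beta'_j = \beta_j - k$, $\gamma'_j = \gamma_j - k$, the configuration $m = 3k$ corresponds exactly to a Skolem-type system in which no element equals $2k$, and by O'Keefe \cite{O} such a system exists precisely when $k \equiv 2$ or $3 \pmod 4$. Since $1 \equiv 1$ and $12 \equiv 0 \pmod 4$, neither $k = 1$ nor $k = 12$ can arise from an $m = 3k$ configuration, so the obstructions $S \mid 7$ and $S \mid 73$ never actually occur. Combining the three modular computations with this exclusion shows that $\alpha$, $\beta$ and $\gamma$ are each coprime to $S$ in all cases.

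I expect the main obstacle to be precisely this final bookkeeping: the modular computations are routine once the denominators are cleared, but the argument only closes because the two ``bad'' values $k = 1$ and $k = 12$ happen to fall in the residue classes $1$ and $0 \pmod 4$, for which the $m = 3k$ configuration is unavailable. Verifying carefully that the $k \bmod 4$ characterization from the proof of \cref{lem-Skolem} genuinely forbids these two cases is the crux of the proof.
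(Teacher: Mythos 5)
Your proof is correct and follows essentially the same route as the paper's: the same closed forms ($\alpha = k(k+1)/2$, $2\gamma = \alpha+\beta+\gamma = T$, $\beta = \gamma - \alpha$), the same modular reductions isolating the exceptional divisibilities $S \mid 7$ (i.e.\ $k=1$) and $S \mid 73$ (i.e.\ $k=12$), and the same appeal to the Skolem--O'Keefe mod-$4$ dichotomy to exclude those two values. The only difference is organizational: the paper encodes the missing element by $r \in \{0,1\}$ defined from $k \bmod 4$ at the outset and reaches the contradiction ``$k=1$ forces $r=0$'' inside the $r=1$ case, whereas you case directly on the missing element $m$ and invoke the mod-$4$ characterization only at the end -- logically the identical argument.
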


\begin{proof}
Posons $r = 0$ si si $k \equiv 0 \mbox{ ou } 1 \mbox{ modulo } 4$ et $r=1$ sinon. Puisque $S$ est premier, il suffit de montrer que $S$ ne divise aucune des sommes. Il est clair que le nombre premier $S$ ne divise pas $\alpha = k(k+1)/2$. Ensuite, observons que 
\[ 2 \gamma = \alpha + \beta + \gamma = r + \dfrac{3k(3k+1)}{2} = r + \dfrac{(S-1)(S+1)}{8} \, .\]
Il est clair que $S$ ne divise pas $\gamma$ si $r=0$. Si $r=1$, alors $S$ divise $\gamma$ seulement si $S$ divise $S^2 + 7$, donc seulement si $S=7$, ce qui oblige $k = 1$ et donc $r=0$\,; bref, $S$ ne divise pas $\gamma$ si $r=1$. Finalement, nous avons
\[ \beta = \gamma - \alpha= \dfrac{7S^2 - 8S + 1 + 72r}{144}\, .\]
Il est clair que $S$ ne divise pas $\beta$ si $r=0$. Si $r=1$, alors $S$ divise $\beta$ seulement si $S = 73$, ce qui oblige $k = 12$ et donc $r = 0$\,; bref, $S$ ne divise pas $\beta$ si $r=1$.
\end{proof}

\textbf{Étape 1}\,: Soit $S = 6k+1$ ($k \ge 1$), de sorte que $g = k(6k-2)$ et $f = k(6k+1) = kS$. Notre rectangle $\Pi$ de longueur $2f$ est donc la concaténation de $k$ sous-rectangles $\Pi_j$ ($j=1, \dots, k$) de longueur $2S$ chacun, où le sous-rectangle $\Pi_j$ est formé par les segments horizontaux $(p_{2S(j-1) + 1}, \dots, p_{2Sj+1})$ et $(p'_{2S(j-1) + 1}, \dots, p'_{2Sj+1})$ et est décoré des arcs $d_{S(j-1) + 1}, \ldots, d_{Sj}$.

Pour la suite des choses, fixons un étiquettage $0, \dots, S-1$ des sommets de $K_S$. Il nous sera utile de penser à ces étiquettes comme étant les éléments de $\mathbb{Z}/S\mathbb{Z}$.

\textbf{Étape 2}\,: Nous allons construire le système de Steiner sur $K_S$ et la valuation $v$ sur les $p_1, \dots, p_{2f+1}$ simultanément.  Tel que montré dans \cite[$\S 2$]{S2}, le \cref{lem-Skolem} permet de construire un système de Steiner sur $K_S$ quel que soit $S = 6k+1$. Ainsi, le système de triples de Steiner que nous nous apprêtons à construire quand $S$ est premier n'est qu'un cas particulier de la construction donnée par Skolem, mais il a le mérite d'admettre une structure particulièrement régulière, ce qui nous aidera à accomplir les autres Étapes de notre stratégie.

Pour $1 \le j \le k$, considérons le rectangle $\Pi_j$, dont les «\,coins\,» de la face inférieure sont $p_{2S(j-1) + 1}, \dots, p_{2Sj+1}$. Considérons aussi le triple $(\alpha_j, \beta_j, \gamma_j)$ donné par le \cref{lem-Skolem}, que nous interprétons comme sous-ensemble de $\mathbb{Z}/S\mathbb{Z}$. Nous définissons une valuation $v_j$ de ces coins dans $\mathcal{S}(K_S) =  \mathbb{Z}/S\mathbb{Z}$ par les deux règles suivantes\,:
\begin{enumerate}[(i)]
\item $v_j(p_{2S(j-1)+1}) = 0$.
\item $v_j(p_{m+1}) = v_j(p_m) + \alpha_j$ si $m$ est impair et $v_j(p_{m+1}) = v_j(p_m) + \beta_j$ si $m$ est pair.
\end{enumerate}

\noindent Ces règles impliquent $v_j(p_{m+2}) = v_j(p_{m}) + \gamma_j$ pour tout $2S(j-1) + 1 \le m \le 2Sj - 1$. En particulier, $v_j(p_{2Sj + 1}) = v_j(p_{2S(j-1) + 1}) + S\gamma_j = 0 = v_{j+1}(p_{2Sj+1})$. Ceci prouve que les divers $v_j$ déterminent ensemble une valuation $v$ sur la face inférieure de $\Pi$.

Pour chaque $1 \le j \le k$, du fait que $\gamma_j$ est copremier avec $S$, il se trouve que les $S$ valeurs $v_j(p_{2S(j-1)+ 2m - 1}) \in K_S$ ($1 \le m \le S$) sont distinctes\,; elles énumèrent donc tous les sommets de $K_S$ sans répétition. Il en va de même des sommets  $v_j(p_{2S(j-1) + 2m})$ ($1 \le m \le S$), puisque cet ensemble n'est qu'une translation de l'ensemble précédent. Donc chaque sommet $v \in K_S$ apparait précisément deux fois parmi les $p_{2S(j-1) + m}$ ($1 \le m \le 2S$), en fait pour deux $m$ de parités différentes.

À ce point-ci et en prévision des Étapes suivantes, il convient de dire qu'un côté $a_m$ de $\Pi_j$ est «\,un côté $\alpha_j$\,» si $m$ est impair et est «\,un côté $\beta_j$\,» si $m$ est pair\,; cette terminologie reflète simplement la différence de valuations entre les extrémités droite et gauche de $a_m$. Le paragraphe précédent implique donc que pour chaque sommet $v \in K_S$ et chaque $1 \le j \le k$, il y a parmi les côtés $\alpha_j$ un seul côté $\alpha_j(v)$ qui débute par $v$ et un seul (autre) côté $\alpha'_j(v)$ qui termine par $v$\,; similairement, parmi les côtés $\beta_j$, il y a un seul côté $\beta_j(v)$ qui débute par $v$ et un seul (autre) côté $\beta'_j(v)$ qui termine par $v$.  Conséquemment, pour tout $v \in K_S$ et tout $1 \le j \le k$, parmi les diagonales $d_{S(j-1) + n}$ ($1 \le n \le S$), il y en a une seule qui débute par $v$ et il y en a une seule autre qui termine par $v$.

Nous affirmons que l'ensemble $\{ (v(p_{2m-1}), v(p_{2m}), v(p_{2m+1})  \}_{1 \le m \le f}$, c'est-à-dire l'ensemble des triplets de valuation des coins des $f$ demi-lunes de $\Pi$, est un système de triples de Steiner de $K_S$. Cela découle du paragraphe précédent et de \cite[$\S 2$]{S2}. Alternativement, il suffit de vérifier que la valuation induite sur les côtés $a_m$ et les diagonales $d_n$ à valeurs dans $\mathcal{A}(K_S)$ est injective. Soient $(p_{m}, p_{m'})$ et $(p_{l}, p_{l'})$ ($m'-m, l'-l \in \{1,2\}$) deux segments distincts ayant la même valuation, c'est-à-dire que $\{ v(p_m), v(p_{m'}) \} = \{ v(p_l), v(p_{l'})\} \in \mathcal{A}(K_S)$.
\begin{itemize}
\item[$\bullet$] La possibilité $v(p_m) = v(p_{l'})$ et $v(p_{m'}) = v(p_l)$ est exclue. Autrement, nous aurions $v(p_{m'}) - v(p_m) = -(v(p_{l'}) - v(p_{l}))$ et $v(p_{m'}) - v(p_m), v(p_{l'}) - v(p_l) \in \{ \alpha_j, \beta_j, \gamma_j \}_{1 \le j \le k} \subset \mathbb{Z}/S\mathbb{Z}$, de sorte que deux des $\alpha_1, \ldots, \gamma_k$ aurait une somme nulle dans $\mathbb{Z}/S\mathbb{Z}$, ce qui est absurde.

\item[$\bullet$] Donc $v(p_m) = v(p_l)$ et $v(p_{m'}) = v(p_{l'})$. Ainsi $v(p_{m'}) - v(p_m) = v(p_{l'}) - v(p_l) \in \{ \alpha_j, \beta_j, \gamma_j \}_{1 \le j \le k}$. Donc les segments $p_m p_{m'}$ et $p_l p_{l'}$ appartiennent au même rectangle $\Pi_j$, sont tous les deux des $a_m$ et des $d_l$ et sont ainsi en fait égaux.
\end{itemize}

 \begin{figure}[h]
  \centering
\includegraphics[angle=270, width=1\textwidth]{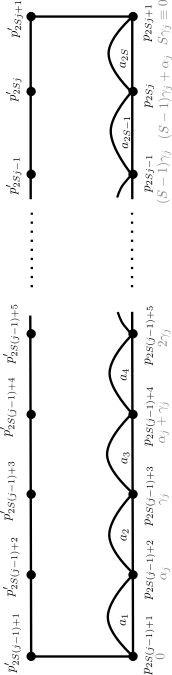}
\caption{Illustration du rectangle $\Pi_j$ apparaissant dans l'étape 2 de la Synthèse. La valeur de la valuation sur les «\,coins\,» est indiquée en gris.}\label{fig-SyntheseDrawing}
\end{figure}

\textbf{Étape 3}\,: Nous définissons la valuation $v$ sur les $p'_{m}$ ($1 \le m \le 2f + 1$) à valeurs dans $\mathcal{S}(K_S)$ comme suit. D'abord, posons $v_1 := v(p'_1) = 0$. Ensuite, pour $m \ge 2$, nous définissons $v_m := v(p'_m)$ par récurrence comme suit (rappelons que $f = kS$)\,:
\begin{enumerate}[(i)]
\item Si $1 \le m \le f $ et $m \equiv j \mbox{ mod } k$, alors $v_{m+1} := v_{m} + \beta_j$. Autrement dit, $(v_{m}, v_{m+1}) := v(\beta_j(v_{m}))$, d'où $a'_{m} = \overline{\beta_j(v_{m})}$.

\item Si $f+1 \le m \le 2f$ et $m \equiv j \mbox{ mod } k$, alors $v_{m+1} := v_{m} + \alpha_j$. Autrement dit, $(v_{m}, v_{m+1}) := v(\alpha_j(v_{m}))$, d'où $a'_{m} = \overline{\alpha_j(v_{m})}$.
\end{enumerate}

\noindent Concrètement, les $f$ côtés $a'_m$ qui forment la moitié gauche de la face supérieure de $\Pi$ sont choisis parmi les $\beta$ de la face inférieure, plus précisément dans un ordre (cyclique sur les indices) $\beta_1, \beta_2, \dots, \beta_k$, tandis que les $f$ côtés $a'_m$ qui forment la moitié droite de la face supérieure de $\Pi$ sont choisis parmi les $\alpha$ de la face inférieure, plus précisément dans un ordre (cyclique sur les indices) $\alpha_1, \alpha_2, \dots, \alpha_k$.

Nous vérifions que cette valuation respecte les trois contraintes énoncées\,:

\begin{enumerate}
\item Par définition, $v(p'_1) = v(p_1) = 0$. Par ailleurs, $v(p'_{2f+1}) = v(p'_1) + S \beta + S \alpha = 0 = v(p_{2f+1})$. Observons aussi au passage que la valeur du coin milieu de la face supérieure est $v(p'_{1 + f}) = v(p'_1) + S\beta = 0$.

\item En raison du \cref{lem-Skolem_copremier}, $\beta$ et $\alpha$ sont copremiers à $S$. Cela implique que les $S$ valeurs $v(p'_{1 + km})$ ($0 \le m \le S-1$) sont distinctes dans $\mathcal{S}(K_S)$ et donc qu'elles énumèrent tous les sommets de $K_S$ sans répétition, tout comme les $S$ valeurs $v(p'_{1 + km})$ ($S \le m \le 2S-1$) sont distinctes dans $\mathcal{S}(K_S)$ et qu'elles énumèrent donc aussi tous les sommets de $K_S$ sans répétition.  Par translation, il en va de même pour tous les ensembles $\{v(p'_{j + km})\}_{0 \le m \le S-1}$ et $\{v(p'_{j + km})\}_{S \le m \le 2S-1}$ ($1 \le j \le k$). Cela signifie que pour tout $1 \le j \le k$, tous les côtés $\beta_j$ (respectivement, tous les côtés $\alpha_j$) apparaissent une et une seule fois à gauche (respectivement, à droite) de la face supérieure de $\Pi$. Cela démontre l'existence de la bijection $\phi$.

\item Nous montrerons à l'Étape (4) que $\phi$ est suffisamment mélangeante. Pour l'instant, observons que par construction, $a'_1 = \overline{\beta_1(0)}$, de sorte que $v(a'_1) = \beta_1 \neq \alpha_1 = v(a_1)$. Similairement, $a'_{2f} = \overline{\alpha'_k(0)}$, de sorte que $v(a'_{2f}) = \alpha_k \neq  \beta_k = v(a_{2f})$. Finalement, alors que deux côtés consécutifs $(a_m, a_{m+1})$ sont d'un des trois types $(\alpha_j, \beta_j)$, $(\beta_j, \alpha_j)$ ou $(\beta_j, \alpha_{j+1})$ (avec $j \neq k$ dans cette dernière possibilité), deux côtés consécutifs $(a'_m, a'_{m+1})$ sont d'un des trois types $(\beta_j, \beta_{j+1})$ (où $k+1 := 1$), $(\alpha_j, \alpha_{j+1})$  (où $k+1 := 1$) ou $(\beta_k, \alpha_{1})$. Il est donc impossible d'avoir $\phi(m+1) = \phi(m) + 1$.
\end{enumerate}

\textbf{Étape 4}\,: Soient $v \in \mathcal{S}(K_S)$ et $q$ un coin de $\Pi$ tel que $v = v(q)$. Il nous reste à vérifier que tout autre coin $q'$ vérifiant $v = v(q')$ est identifié à $q$ dans le quotient. Pour ce faire, il suffit de vérifier qu'ensemble, les chaînes débutant par $q$ font intervenir tous les côtés $\alpha_j(v)$ et $\beta_j(v)$. \smallskip

\emph{Cas $v = 0$}. Sans perte de généralité, supposons que $q_{4j-3} := q$ débute $\overline{\beta_j(0)}$. (L'intérêt de débuter l'étiquettage de la chaîne de coins avec l'indice $4j-3$ se trouve dans un argument de récurrence.) Donc $q_{4j-3}$ est identifié au coin $q_{4j-2}$ qui débute $\beta_j(0)$\,; puisqu'il s'agit d'un côté $a_m$ avec $m$ pair, $q_{4j-2}$ termine le côté $\alpha'_j(0)$ (qui appartient aussi à $\Pi_j$). Donc $q_{4j-2}$ est identifié au coin $q_{4j-1}$ qui termine $\overline{\alpha'_j(0)}$.
\begin{itemize}
\item[$\bullet$] Si $1 \le j < k$, alors $q_{4j-1} \neq p'_{2f+1}$. Donc $q_{4j-1}$ débute le côté $\overline{\alpha_{j+1}(0)}$. Ainsi, $q_{4j-1}$ est identifié au coin $q_{4j}$ qui débute $\alpha_{j+1}(0)$, c'est-à-dire que $q_{4j}$ est le coin inférieur gauche $p_{2Sj + 1}$ de $\Pi_{j+1}$. Puisque $j+1 \ge 2$, $q_{4j}$ termine $\beta'_j(0)$\,; $q_{4j}$ s'identifie donc au coin $q_{4j+1} = q_{4(j+1)-3}$ qui termine $\overline{\beta'_j(0)}$ et qui débute le côté $\overline{\beta_{j+1}(0)}$. Une récurrence sur $1 \le j < k$ est alors clairement possible.

\item[$\bullet$] Si $j=k$, alors $q_{4k-1} = p'_{2f+1}$ est le coin supérieur droit de $\Pi$. Ainsi, $q_{4k-1}$ est identifié au coin inférieur droit de $\Pi$, $q_{4k} = p_{2f+1}$, qui termine $\beta'_k(0)$. En retour, $q_{4k}$ est identifié au coin $q_{4k+1}$ qui termine $\overline{\beta'_k(0)}$, c'est-à-dire au coin milieu $q_{4k+1} = p'_{f+1}$, qui débute $\overline{\alpha_1(0)}$. Donc $q_{4k+1}$ est identifié au coin $q_{4k+2}$ qui débute $\alpha_1(0)$, c'est-à-dire que $q_{4k+2} = p_1$ est le coin inférieur gauche de $\Pi$. Ce coin est ainsi identifié au coin $q_1 = p'_1$ qui débute $\overline{\beta_1(0)}$.
\end{itemize}
Par inspection, nous voyons que tous les $\alpha_j(0)$ et tous les $\beta_j(0)$ interviennent dans la chaîne précédente. Tous les coins de $\Pi$ appartenant à $v^{-1}(0)$ apparaissent donc dans la chaîne ci-dessus et ils sont ainsi tous identifiés dans le quotient.\smallskip

\emph{Cas $v \neq 0$}. C'est précisément ici que l'hypothèse «\,$k$ impair\,» sera utilisée. Soit $q_1 = q$ qui termine $\overline{\beta'_k(v)}$. Alors $q_1$ est identifié au coin $q_2$ qui termine $\beta'_k(v)$ et qui débute $\alpha_k(v)$ (puisque $v \neq 0$). Donc $q_2$ est identifié au coin $q_3$ qui débute $\overline{\alpha_k(v)}$. Encore du fait que $v \neq 0$, $q_3$ termine $\overline{\alpha'_{k-1}(v)}$ (où nous définissons l'indice $0 := k$) et est donc identifié au coin $q_4$ qui termine $\alpha'_{k-1}(v)$ et débute $\beta_{k-1}(v)$. Ainsi, $q_4$ est identifié au coin $q_5$ qui débute $\overline{\beta_{k-1}(v)}$.

Si $k=1$, alors $q_5$ débute $\overline{\beta_{1}(v)}$ et termine $\overline{\beta'_1(v)}$. Bref, $q_1 = q_5$. Dans ce cas, puisque $\alpha_1(v)$ et $\beta_1(v)$ ont été impliqués, tous les coins de valuation $v$ apparaissent dans la chaîne ci-dessus et sont ainsi tous identifiés dans le quotient.

Si $k = 2l+1$ avec $l \ge 1$, alors du fait que $v \neq 0$, $\overline{\beta_{k-1}(v)}$ est immédiatement précédé de $\overline{\beta'_{k-2}(v)}$.  Puisque $k-2 \not \equiv k \mbox{ mod } k$,  ce côté diffère de $\overline{\beta'_k(v)}$. Bref, $q_5 \neq q_1$ et $q_5$ est identifié au coin $q_6$ qui termine $\beta'_{k-2}(v)$. Puisque $v \neq 0$, $q_6$ débute $\alpha_{k-2}(v)$ et s'identifie au coin $q_7$ qui débute $\overline{\alpha_{k-2}(v)}$. Encore parce que $v \neq 0$, $q_7$ termine $\overline{\alpha'_{k-3}(v)}$ et s'identifie donc au coin $q_8$ qui termine $\alpha'_{k-3}(v)$ et qui débute $\beta_{k-3}(v)$. Donc $q_8$ est identifié au coin $q_9$ qui débute $\overline{\beta_{k-3}(v)}$.

En fait, par récurrence sur l'argument du dernier paragraphe, pour tout $1 \le m \le l$, le coin $q_5$ s'identifie au coin $q_{2 + 4m}$ qui débute $\alpha_{k-2m}(v)$, au coin $q_{3+4m}$ qui débute $\overline{\alpha_{k-2m}(v)}$, au coin $q_{4+4m}$ qui débute $\beta_{k-2m-1}(v)$ et au coin $q_{5+4m}$ qui débute $\overline{\beta_{k-2m-1}(v)}$. Puisque $k - 2m \not \equiv k \mbox{ mod } k$, tous ces coins et ces côtés sont distincts. Notons qu'à ce point-ci, tous les $\alpha_j(v)$ avec $j$ impair et tous les $\beta_j(v)$ avec $j$ pair sont intervenus.

Nous aboutissons alors au coin $q_{5 + 4l}$ qui débute  $\overline{\beta_{k}(v)}$ et qui termine $\overline{\beta'_{k-1}(v)}$. Par récurrence sur $0 \le m \le l-1$, nous voyons donc que ce coin est identifié au coin $q_{6+4(l+m)}$ qui termine $\beta'_{k-2m - 1}(v)$ et qui débute $\alpha_{k-2m-1}(v)$, qui lui s'identifie au coin $q_{7+4(l+m)}$ qui débute $\overline{\alpha_{k-2m-1}(v)}$ et qui termine $\overline{\alpha'_{k-2m-2}(v)}$, qui lui s'identifie au coin $q_{8+4(l+m)}$ qui termine $\alpha'_{k-2m-2}(v)$ et qui débute $\beta_{k-2m-2}(v)$, qui lui s'identifie au coin $q_{9+4(l+m)}$ qui débute $\overline{\beta_{k-2m-2}(v)}$.

Pour $m = l-1$, nous aboutissons au coin $q_{5 + 8l}$ qui débute $\overline{\beta_{1}(v)}$ et qui termine donc $\overline{\beta'_k(v)}$. Bref, $q_1 = q_{5+8l}$. Observons que tous les $\alpha_j(v)$ avec $j$ pair et tous les $\beta_j(v)$ avec $j$ impair sont apparus lors de cette deuxième récurrence. Donc tous les coins de valuation $v$ sont identifiés dans le quotient.

%%%%%%%%%%%%%%%%%%%%% Asymptotique V_c(g)

%%%%%%%%%%%%%%%% Théorème d'optimalité

\section{\Cref{thm-asymptotique}} \label{sec-thm-asymptotique}

Nous allons montrer que pour tout genre $g \ge 1$, il existe un graphe $G_g \subset \Sigma_g$ cellulaire avec MC qui satisfait $V_r(G_g) \ge \sqrt{6g} + o(\sqrt{g})$. 
\\ \\
Tout d'abord, écrivons $\mathcal{G} := \{ g = g(S) := (S-1)(S-3)/6 \, | \, S \in \mathfrak{S} \}$ où  
\[  \mathfrak{S} := \left\{ S \in \mathbb{N} \, | \, S \mbox{ premier}, \, S \equiv 7 \mbox{ mod } 12 \right\} \, . \] Le premier élément de $\mathcal{G}$ est $g=4$. Dans ce qui suit, nous aurons besoin du contrôle suivant sur les écarts entre des éléments suffisamment grands de $\mathcal{G}$.
\begin{lem} \label{lem-gap}
Il existe $g_* \ge 7^{10}$ tel que pour tout $g' \in \mathcal{G}$ tel que $g' > g_*$, il existe $g'' \in \mathcal{G}_*$ tel que $g' < g'' < g' +  (g')^{9/10}$.
\end{lem}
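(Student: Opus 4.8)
The plan is to transport the problem from the genus $g$ to the prime $S$ via the strictly increasing map $S \mapsto g(S) = (S-1)(S-3)/6$, and then to invoke the short-interval result on primes $\equiv 7 \bmod 12$ cited in the introduction. First I would record that $g(S)$ is strictly increasing for $S \ge 2$, so that $\mathcal{G}$ is enumerated in the same order as $\mathfrak{S}$; writing $g' = g(S')$ with $S' \in \mathfrak{S}$, producing an element $g'' \in \mathcal{G}$ with $g' < g''$ amounts exactly to producing a prime $S'' \equiv 7 \bmod 12$ with $S'' > S'$, and then $g'' = g(S'')$. The exact increment is
\[ g(S'') - g(S') = \frac{(S''-S')(S''+S'-4)}{6}, \]
which I would use to convert an upper bound on the prime gap $S'' - S'$ into an upper bound on $g'' - g'$.

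Next I would set up the two elementary estimates that frame the target inequality. On one side, for $S'$ large one has $g' = (S'-1)(S'-3)/6 \ge (S')^2/12$ (valid already for $S' \ge 8$), hence $(g')^{9/10} \ge (S')^{9/5}/12^{9/10}$. On the other side, if $S'' - S' \le h$ with $h \le S'$, the displayed formula gives $g'' - g' \le h(2S'+h-4)/6 \le h\,S'$. Thus it suffices to locate a prime $S'' \equiv 7 \bmod 12$ in an interval $(S', S' + h]$ with $h$ of size essentially $(S')^{4/5}$: then $g'' - g' \lesssim (S')^{1+4/5} = (S')^{9/5}$, which for $S'$ large falls below $(g')^{9/10}$ with room to spare.

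The analytic input is \cite[Theorem 3]{BHP}: for all sufficiently large $x$, the interval $(x, x + x^{\theta}]$ contains a prime $\equiv 7 \bmod 12$, where the admissible exponent $\theta$ is comfortably smaller than $4/5$. Applying this with $x = S'$ furnishes a prime $S'' \equiv 7 \bmod 12$ with $S'' - S' \le (S')^{\theta}$. Feeding this into the previous step gives $g'' - g' \le (S')^{\theta}(2S' + (S')^{\theta})/6 \le (S')^{1+\theta}$ once $S'$ is large, and since $1 + \theta < 9/5$ while $(g')^{9/10} \ge (S')^{9/5}/12^{9/10}$, the inequality $g'' - g' < (g')^{9/10}$ holds for every $g'$ exceeding a suitable threshold. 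Enlarging this threshold to a $g_* \ge 7^{10}$ meeting all the "sufficiently large'' clauses above completes the argument, since $g'' > g' > g_*$ then places $g''$ in $\mathcal{G}_*$ as required.

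Finally, the only genuinely deep ingredient is the short-interval distribution of primes in the fixed residue class $7 \bmod 12$, which we are permitted to quote; the remainder is bookkeeping. Accordingly, the point to watch is simply that the exponent $4/5$ demanded of the prime gap (equivalently the exponent $9/10$ in $g$) sits \emph{strictly} above the Baker--Harman--Pintz exponent, so that the chain of inequalities closes with a fixed amount of slack rather than marginally; this slack is what lets a single threshold $g_*$ work uniformly for all larger $g' \in \mathcal{G}$.
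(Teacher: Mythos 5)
Your proof is correct and follows essentially the same route as the paper: write $g' = g(S')$, invoke the Baker--Harman--Pintz short-interval theorem for primes $\equiv 7 \bmod 12$ to find $S'' > S'$ with $S'' - S' \le (S')^{\theta}$, and convert the prime gap into a genus gap via $g(S'') - g(S') = (S''-S')(S''+S'-4)/6$. The only difference is presentational: the paper works with the explicit exponent $3/5$ and tracks constants (obtaining $h \le \tfrac12 (S')^{8/5} \le 7(g')^{4/5} \le (g')^{9/10}$ precisely when $g' \ge 7^{10}$), whereas you keep a generic $\theta < 4/5$ and absorb the constants into an asymptotic threshold, which is equally valid.
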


\begin{proof}
Le fait suivant est un corollaire d'un théorème de Baker--Harman--Pintz \cite[Theorem 3(I)]{BHP} sur la répartition des nombres premiers appartenant à une classe de congruence\,:
\begin{quotation}
Soient $1 \le a < q$ deux entiers copremiers. Il existe un réel $x_*$ tel que pour tout $x \ge x_*$, il existe un nombre premier $p \equiv a \mbox{ mod } q$ entre $x$ et $x + x^{3/5}$. 
\end{quotation}
Pour $q = 12$ et $a = 7$, ceci implique qu'il existe un entier $S_*$ tel que pour tout entier $S' \ge S_*$, il existe $S'' \in \mathfrak{S}$ tel que $0 \le s := S'' - S' \le (S')^{3/5}$. Si $S' \in \mathfrak{S}$, considérons les éléments $g' := g(S')$ et $g'' := g(S'')$ dans $\mathcal{G}$ et posons $h := g'' - g'$. Nous estimons
\begin{align}
\notag h &= \frac{1}{6} \left[ (S''-2)^2 - (S'-2)^2  \right] = \frac{1}{6}(2(S'-2) + s)s  \le   \frac{1}{2}S' s \le \frac{1}{2} (S')^{8/5} \, .
\end{align}
Quitte à prendre $g_* \ge 7^{10}$, alors $S' = 2 + \sqrt{6g'+1} \le 3 + \sqrt{6g'}  \le \sqrt{24g'}$ et
\begin{align}
\notag h & \le \frac{1}{2} (24g')^{4/5} \le 7 (g')^{4/5} \le (g')^{9/10} \, .
\end{align}
\end{proof}
Fixons maintenant pour de bon un $g_*$ dont l'existence est attestée par le \cref{lem-gap} et dénotons par $\mathcal{G}_*$ l'ensemble des $g \in \mathcal{G}$ tels que $g > g_*$. 
\\ \\
Définissons les graphes $G_g$. Nous définissons d'abord $G_g$ pour $g \in \mathcal{G}$ en notant que le \cref{thm-optimalite}(b) a établi que $V_c(g) = 1 + \sqrt{6g+1} = \sqrt{6g} + o(\sqrt{g})$ pour tout $g \in \mathcal{G}$\, ; pour $g \in \mathcal{G}$, nous choisissons $G_g \subset \Sigma_g$ comme étant un graphe cellulaire avec MC qui réalise $V_c(g)$. Ensuite, nous définissons $G_g$ pour $1 \leq g \leq g_*$, $g \not \in \mathcal{G}$ comme suit : par la \cref{prop-existence} il existe une constante $0 < C_* <1$ telle que pour tout $1 \leq g \leq g_*$, il existe un graphe $H_g \subset \Sigma_g$ cellulaire avec MC tel que $V_r(H_g) > C_* \sqrt{6g}$, et donc nous prenons $G_g : = H_g$ pour $1 \leq g \leq g_*$, $g \not \in \mathcal{G}$.
\\ \\
Finalement, pour $g \not \in \mathcal{G}$ avec $g > g_*$, on définit $G_g$ récursivement par la construction de la somme connexe. C'est-à-dire, pour un tel $g$, nous laissons $g' \in \mathcal{G}$ être le plus grand élément dans $\mathcal{G}$ tel que $g' < g$ et posons $G_g := G_{g'} \# G_{h}$ où $h := g - g'$. Notons que le \cref{lem-gap} implique que $h < (g')^{9/10} < g'$ et donc cette définition récursive de $G_g$ pour $g > G_*$, $g \not \in \mathcal{G}$ est bien définie. Le c\oe ur de la démonstration consiste à montrer que ces graphes satisfont $V_r(G_g) \ge \sqrt{6g} + o(\sqrt{g})$.
\\ \\
Nous affirmons qu'il existe $D > 6$ tel que pour tout $k \ge 1$,
\[  S(G_k) \le D \sqrt{k}  \, . \]
Il convient de souligner que cette égalité tient si $k \in \mathcal{G}$, car dans ce cas (\cref{prop-graphecomplet}) $S(G_k) = 2 + \sqrt{6k+1} <  \sqrt{24 k}$. Il est aussi clair qu'une telle inégalité tient pour $k \le g_*$. Soit $g > g_* \ge 7^{10}$ avec $g \not \in \mathcal{G}$ et supposons que l'inégalité ait été établie pour tout $k < g$. Dénotons par $g'$ le plus grand élément de $\mathcal{G}$ qui soit inférieur à $g$ et $h := g - g'$. Posons aussi $G' = G_{g'}$, $H = G_h$ et $G = G_g = G \# H$. En vertu du \Cref{lem-gap}, nous avons $h < g^{9/10}$, de sorte que\,:
\begin{align}
\notag S(G) &\le S(G') + S(H) \le \sqrt{24 g'} + D \sqrt{h}  \\
\notag &\le \sqrt{24 g} + D g^{9/20} \\
\notag &\le \left( \dfrac{\sqrt{24}}{6} + 7^{- 1} \right) \, D \sqrt{g} < D \sqrt{g} \, .
\end{align}

Nous affirmons enfin qu'il existe $D' \ge D+1$ tel que pour tout $k \ge 1$,
\[  V_r(G_k) \ge \sqrt{6k} - D' k^{9/20} \, . \]
Cette inégalité tient pour $k \in \mathcal{G}$, car alors $V_r(G_k) = b(k) > \sqrt{6k}$ en vertu du \cref{thm-optimalite}(b). Une telle inégalité tient clairement pour $k \le g_*$. Soit $g > g_* \ge 7^{10}$ qui n'est pas dans $\mathcal{G}$ et supposons que nous ayons montré l'inégalité pour tout $k < g$. Avec les mêmes notations que précédemment, en ayant recours à l'identité ($\#$) concernant $V_r(G)$, nous calculons
\begin{align}
\notag V_r(G) &\ge V_r(G) \dfrac{S(G')}{S(G') + S(H)} + V_r(H) \dfrac{S(H)}{S(G') + S(H)} \ge  V_r(G') \dfrac{S(G')}{S(G') + S(H)} \\
\notag &\ge \sqrt{6g} - \left[ (\sqrt{6g} - \sqrt{6g'}) + (\sqrt{6g'} - V_r(G')) + V_r(G') \dfrac{S(H)}{S(G')}  \right] \, .
\end{align}
Nous avons déjà noté que $\sqrt{6g'} - V_r(G') < 0$. Puisque $\sqrt{1+x} - 1 \le x/2$ pour $x \ge 0$ et puisque $h \le (g')^{9/10}$, nous estimons
\begin{align}
\notag \sqrt{6g} - \sqrt{6g'} &\le \sqrt{6g'} \dfrac{h}{2g'} \le \sqrt{\frac{3}{2}} (g')^{2/5} \le  g^{9/20} \, .
\end{align}
Finalement, puisque $V_r(G') =  S(G') - 1$ (\cref{prop-graphecomplet}), nous calculons
\begin{align}
\notag  V_r(G') \dfrac{S(H)}{S(G')} &\le S(H) \le D\sqrt{h} \le D g^{9/20}  \, .
\end{align}
Il résulte de tout ceci que
\begin{align}
\notag V_r(G) &\ge \sqrt{6g} - (D+1) g^{9/20} \ge  \sqrt{6g} - D' g^{9/20} \, .
\end{align}

%%%%%%%%%%% Appendice Invariants%%%%%%%%%%%%%%%

\appendix

%%%%%%%%%%% Appendice Invariants
\section{Invariants $B(g)$ et $C(g)$}\label{app_invariants}

       \begin{defn}   Soit $\Sigma_g$ une surface orientable fermée de genre $g$.  Définissons $B(g)$ comme le maximum d'éléments que peut contenir une famille de lacets non orientés plongés, tous disjoints deux à deux, et dont les classes d'homotopie libres (sans point de base) non orientées sont toutes différentes, et différentes de la classe triviale. 
       
         Définissons $C(g)$, pour un point de base $p \in \Sigma_g$ arbitraire,  comme le maximum d'éléments que peut contenir une famille de lacets non orientés plongés tous basés en $p$, disjoints deux à deux hors de $p$, et qui représentent des classes différentes en homotopie non orientée basée en $p$, et différentes de la classe triviale. 
         
         Définissons enfin $C_2(g)$, pour deux points de base arbitraires $p$ et $p'$ dans $\Sigma_g$, comme le maximum d'éléments que peut contenir une famille de courbes non orientées plongées reliant $p$ à $p'$, deux à deux disjointes, et qui représentent des classes différentes en homotopie non orientée à bouts $p$ et $p'$ fixés.  \end{defn}
            
            \begin{prop} $B(g) = 3g-3$ pour $g > 1$ et vaut $0$ en genre $0$ et $1$ en genre $1$.  Et $C(g) \leq 6g-3$ pour $g >1$,  vaut $0$ en genre $g=0$ et $3$ en genre $1$. Enfin $C_2(g) = C(g) + 1$. 
            \end{prop}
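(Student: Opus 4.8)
\emph{Stratégie d'ensemble.} Je traiterais les trois assertions séparément, chacune reposant sur un décompte de faces et la formule d'Euler, dans le même esprit que la \cref{sec-ineg_struc}. Pour $B(g)$ avec $g \ge 2$, je découperais $\Sigma_g$ le long d'une famille maximale de $k$ lacets plongés, disjoints et deux à deux non librement homotopes, aucun n'étant trivial. Aucune composante du complémentaire n'est alors un disque (sinon son lacet de bord serait trivial) ni un anneau (sinon ses deux cercles de bord rendraient deux lacets homotopes, ou un lacet trivial); chaque composante a donc une caractéristique d'Euler $\le -1$. Comme la somme de ces caractéristiques vaut $\chi(\Sigma_g) = 2-2g$, il y a au plus $2g-2$ composantes. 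En écrivant le nombre total de cercles de bord $2k$ sous la forme $\sum_i (2 - 2h_i - \chi_i)$, où $h_i$ et $\chi_i$ désignent le genre et la caractéristique de la $i$-ème composante, on obtient $2k \le 2(2g-2) - (2-2g)$, d'où $k \le 3g-3$; la borne est saturée par une décomposition en pantalons. Les cas $g=0$ (tout lacet borde un disque par Jordan--Schoenflies) et $g=1$ (deux lacets essentiels disjoints cobordent un anneau) donnent directement $0$ et $1$.

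\emph{Invariant $C(g)$.} Je verrais une famille de lacets basés en $p$, disjoints hors de $p$, comme un graphe $G$ à un seul sommet et $m$ arêtes plongé dans $\Sigma_g$. Quitte à agrandir la famille, je la supposerais maximale, donc remplissante: comme $p$ est l'unique sommet, toute composante du complémentaire a $p$ sur son bord, et une composante qui ne serait pas un disque permettrait d'ajouter un lacet essentiel non homotope aux précédents. Le plongement est donc cellulaire. L'absence de lacet trivial exclut les monogones et la distinction des classes exclut les bigones, de sorte que chaque face possède au moins trois côtés, d'où $2m \ge 3F$. La formule d'Euler $1 - m + F = 2-2g$ fournit alors $m \le 6g-3$. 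Pour $g=1$, ceci donne $C(1) \le 3$, borne réalisée par les trois lacets $a$, $b$ et $ab$ du tore (les arêtes de la triangulation idéale du tore épointé); le cas $g=0$ est trivial.

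\emph{Relation $C_2(g) = C(g)+1$.} J'emploierais l'opération de contraction de la \cref{sec-operations}. Partant de $n$ arcs disjoints (hors extrémités) de $p$ à $p'$, la contraction d'un arc $\delta_0$ est un homéomorphisme de $\Sigma_g$ identifiant $p$ et $p'$ en un point $\bar p$ et transformant les $n-1$ arcs restants en lacets basés en $\bar p$, disjoints hors de $\bar p$, non triviaux et deux à deux non homotopes (sans quoi les arcs d'origine seraient homotopes entre eux ou à $\delta_0$). D'où $C_2(g) - 1 \le C(g)$. Pour l'inégalité inverse, je partirais d'une famille optimale de $C(g)$ lacets basés en $\bar p$ et j'éclaterais ce sommet en une arête $\delta_0$ reliant deux nouveaux sommets $p$ et $p'$ (éclatement de la \cref{sec-operations}), chaque lacet devenant alors un arc de $p$ à $p'$, auquel on adjoint $\delta_0$.

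\emph{Principal obstacle.} La difficulté se concentre sur l'inégalité $C_2(g) \ge C(g)+1$. Pour que l'éclatement transforme \emph{simultanément} tous les lacets en arcs de $p$ à $p'$, il faut que la coupure qui le définit sépare, dans l'ordre cyclique des demi-arêtes en $\bar p$, les deux extrémités de chacun des lacets; les lacets dont les deux extrémités tombent du même côté resteraient des lacets et seraient perdus. Rien ne garantit qu'une famille optimale arbitraire admette une telle coupure, et c'est précisément la compatibilité entre l'optimalité de la famille de lacets et la structure cyclique requise par l'éclatement qui forme le c\oe ur de la preuve. Il faudra donc soit exhiber une famille optimale dont l'ordre cyclique en $\bar p$ est adéquat, soit construire directement une famille de $C(g)+1$ arcs, en contrôlant soigneusement les classes d'homotopie relatives à $\{p,p'\}$.
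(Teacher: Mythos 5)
Votre argument pour $B(g)$ est correct et c'est essentiellement celui du papier~: les deux reposent sur le découpage le long de la famille et un décompte de caractéristiques d'Euler des morceaux (le papier va un peu plus loin et identifie chaque morceau à un pantalon, ce qui donne directement l'égalité~; vous obtenez la borne supérieure par le décompte et la borne inférieure par une décomposition en pantalons --- même substance).

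Pour $C(g)\le 6g-3$, votre route diffère réellement de celle du papier~: le papier classifie la famille selon ses paires duales de Poincaré, coupe le long de celles-ci pour obtenir un polygone et additionne les contributions, tandis que vous lisez la famille comme un graphe à un sommet et combinez la formule d'Euler avec l'exclusion des faces monogones et bigones ($2m\ge 3F$). Votre route est plus courte et plus propre, mais telle quelle elle a un point faible~: la réduction au cas cellulaire. ``Quitte à agrandir la famille, je la supposerais maximale'' est circulaire --- pour savoir que toute famille est contenue dans une famille maximale, il faut déjà la finitude que l'on cherche à prouver --- et ``maximale, donc remplissante'' est affirmé sans preuve (il faudrait produire, dans une face non-disque, un nouveau lacet essentiel non homotope à ceux déjà présents, ce qui n'est pas immédiat). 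Heureusement, ce détour est inutile~: pour une famille arbitraire, l'additivité de la caractéristique d'Euler donne $2-2g=(1-m)+\sum_f\chi(\hat f)$, où $\hat f$ parcourt les faces complétées~; les faces non-disques vérifient $\chi(\hat f)\le 0$, donc le nombre $F_1$ de faces disques satisfait $F_1\ge m+1-2g$, et comme les faces disques ont toujours au moins trois côtés, $2m\ge 3F_1\ge 3(m+1-2g)$, c'est-à-dire $m\le 6g-3$, sans aucune hypothèse de cellularité.

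Pour la dernière assertion, vous prouvez $C_2(g)\le C(g)+1$ par contraction --- ce qui constitue la totalité de l'argument du papier --- et vous laissez honnêtement $C_2(g)\ge C(g)+1$ ouverte, en signalant que l'éclatement exige un intervalle séparateur dans l'ordre cyclique au point base, intervalle qu'une famille optimale n'admet pas forcément. Votre inquiétude n'est pas un défaut de votre preuve~: l'obstruction est fatale, et l'inégalité que vous n'arriviez pas à démontrer est en fait \emph{fausse} pour $g\ge 2$. Appliquez votre propre décompte de faces à une famille de $n\ge 2$ arcs de $p$ à $p'$~: le graphe a maintenant deux sommets, tout bord de face alterne entre des coins en $p$ et des coins en $p'$, donc est de longueur paire, et la longueur $2$ est exclue (un bigone donnerait deux arcs homotopes)~; chaque face disque a donc au moins $4$ côtés et le décompte donne $2n\ge 4(n-2g)$, soit $C_2(g)\le 4g$. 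Comme $C(g)\ge 6g-3$ (le $4g$-gone avec ses $2g$ lacets-côtés et $4g-3$ diagonales de triangulation, précisément la configuration que le papier utilise), l'égalité $C_2(g)=C(g)+1$ forcerait $6g-2\le 4g$, impossible pour $g\ge 2$. La troisième affirmation de la Proposition ne vaut donc qu'en genre $0$ et $1$ (où votre éclatement fonctionne bien~: sur le tore, les extrémités de $a,b,ab$ alternent autour de $\bar p$ et admettent un intervalle séparateur, d'où $C_2(1)=4$), et la preuve d'une ligne du papier (``il suffit de contracter'') n'établit que l'inégalité $C_2\le C+1$. En résumé~: vos deux premières parties sont correctes (modulo la réparation ci-dessus), et la partie que vous n'avez pas pu compléter ne peut pas l'être --- vous avez localisé une erreur dans l'énoncé lui-même.
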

            
         Le résultat pour $B(g)$ est folklorique (nous en rappelons la preuve plus bas). Le fait que $C_2(g) = C(g) + 1$ est évident, il suffit de contracter n'importe quelle courbe entre $p$ et $p'$. 
               
\begin{proof}
Les cas particuliers en genre $0$ et $1$ sont évidents.                         
                       
             Commen\c{c}ons par $B(g)$ qui est un résultat classique : si une famille maximale est donnée, coupons $\Sigma_g$ le long des lacets, on obtient des surfaces $S_1, S_2, \ldots, S_k$.  Si la caractérisque d'Euler\footnote{Qu'il faudrait d'ailleurs appeler la caractéristique de Descartes-Euler, car Descartes fut le premier à énoncer la formule de la caractéristique en genre $0$ affirmant qu'elle était indépendante de la décomposition de la sphère en polygones. Euler en a fait la démonstration.} de $S_i$ est positive ou nulle, c'est un disque ou un anneau, ce qui contredit l'hypothèse d'indépendance des classes. Si elle est plus petite ou égale à $-2$, on peut trouver un lacet simple dans $S_i$ qui n'est pas librement homotope à son bord, ce qui contredit la maximalité. Si la caractéristique est $-1$, $S_i$ est soit un tore avec un trou, soit une paire de pantalons (une sphère avec trois trous). Le cas du tore troué est exclu car il contient un  lacet qui n'est pas homotope au bord. Donc chaque surface $S_i$ est une paire de pantalons et il y a exactement $2g-2$ telles paires sur une surface de genre $g$. Puisque chaque lacet est inclus dans deux des bords des $S_i$, il s'ensuit que $B(g) = (1/2)(3(2g-2)) = 3g-3$. 
            
              Abordons maintenant la preuve de l'inégalité $C(g) \leq 6g-3$ en genre supérieur à $0$. D'abord une définition :
              
              \begin{defn} Soit $\Sigma_g$ une surface orientable fermée de genre $g$. Soit $p \in \Sigma_g$ un point de base arbitraire. Soient $a$ et $b$ deux lacets plongés dans $\Sigma_g$, basés en $p$, disjoints hors du point $p$. On dit que la paire $(a,b)$ est une {\em paire duale de Poincaré}  si $[a] \cdot [b] = \pm 1$, lorsqu'on munit $a$ et $b$ d'orientations arbitraires, et où $[\cdot]$ est l'image par l'homomorphisme de Hurewicz de $\pi_1(\Sigma_g) \to H_1(\Sigma_g;\Z)$. 
              \end{defn} 
              
             Pour toute telle paire duale de Poincaré, on peut trouver un homéomorphisme de $\Sigma_g$ sur lui-même qui envoie la paire sur une paire standard $(a_i,b_i)$. 
             
             Supposons d'abord que la famille maximale $\mathcal F$ réalisant $C(g)$ soit entièrement formée de paires duales de Poincaré. On coupe la surface le long de ces lacets, et on obtient une sphère avec un trou, dont le bord est un $4g$-polygone $P$. C'est donc la fermeture convexe de $4g$ points dans le plan. Par identification des côtés de $P$ par paires duales, on a $2g$ lacets. Par le lemme précédent, on peut ajouter au maximum $4g-3$ arêtes. Donc, dans ce cas, on obtient pour $C(g)$ la valeur $2g + 4g-3 = 6g-3$. 
             
               A l'opposé, s'il n'y avait dans la famille maximale $\mathcal F$ aucune paire duale, on obtiendrait $3g-3$ lacets. En effet, la non-existence des paires duales de Poincaré signifie que, dans un petit disque près de $p$, les segments incidents à $p$ sont suivant l'ordre anti-horaire, de la forme (en notant $a_i$ et $a_i'$ les deux bouts d'un même lacet) :
                $$
               a_{i_1}, a_{i_2}, \ldots a_{i_{m-1}}, a_{i_{m}},  a_{i_m}', a_{i_{m-1}}', \ldots, a_{i_2}',  a_{i_1}'.
               $$
               C'est un ensemble emboité, et l'on peut donc retracter chaque lacet hors de $p$. On tombe alors dans la situation du calcul de B(g), et on obtient $3g-3$. Comme cette formule n'est pas valable pour le tore, il est préférable dans la suite de prendre $B(g) \leq 3g-2$ qui est valable pour tout $g > 0$. 
               
                 Voyons enfin les cas intermédiaires : on suppose qu'il existe sur la surface de genre $g$ exactement $g-g'$ paires duales dans une famille maximale pour $C(g)$. En coupant la surface le long de ces lacets, on obtient une surface $\Sigma$ de genre $g'$ avec un trou dont le bord est un $4(g-g')$-polygone $P$ avec les identifications usuelles sur les arêtes qui donnent déjà $2(g-g')$ lacets.  La formule exacte est alors $C(g) = 6g -3g' -3$ dans ce cas (quand $g'=0$, on retrouve la formule $6g-3$ et quand $g'=g$, on retrouve $3g-3$). Comme il suffit de démonter l'inégalité $C(g) \leq 6g -3g' -3$, on procède de la fa\c{c}on suivante. Aux $2(g-g')$ lacets, on ajoute les $4(g-g') -1$ lacets locaux autour de $P$ qui relient un sommet $p$ de $P$ aux autres. Il reste alors les lacets de $\Sigma$ basés en $p$. Mais comme il n'existe plus de paire Poincaré duale, ces lacets se rétractent dans $\Sigma$ hors de $p$ et constituent une famille maximale pour $\Sigma_{g'}$ dont le nombre est $B(g) \leq 3g-2$. La somme donne $C(g) \leq 2(g-g') + 4(g-g') - 1 + 3g' -2 = 6g - 3g' - 3$.  Le maximum de cette borne supérieure sur $0 \leq g' \leq g$ est $6g-3$.
\end{proof}

%
% ----------  B I B L I O G R A P H I E  ----------
%

\end{document}